\newcommand{\R}{\mathbb{R}}
\newcommand{\N}{\mathbb{N}}
\newcommand{\Z}{\mathbb{Z}}
\newcommand{\C}{\mathbb{C}}
\newcommand{\inner}[2]{\ifthenelse{\equal{#2}{}}{\left\langle\cdot,\cdot\right\rangle_{#1}}{\left\langle#2\right\rangle_{#1}}}
\newcommand{\norm}[2]{\ifthenelse{\equal{#2}{}}{\left\|\cdot\right\|_{#1}}{\left\|#2\right\|_{#1}}}
\newcommand{\seminorm}[2]{\ifthenelse{\equal{#2}{}}{\left|\cdot\right|_{#1}}{\left|#2\right|_{#1}}}
\newcommand{\ns}{\mathcal H(\Omega)}
\newcommand{\calh}{\mathcal H}
\newcommand{\fa}{\hbox{ for all }}
\DeclareMathOperator*{\Sp}{span}
\newcommand{\call}{\mathcal L}
\newtheorem{theorem}{Theorem}
\newtheorem{prop}[theorem]{Proposition}
\newtheorem{cor}[theorem]{Corollary}
\newtheorem{lemma}[theorem]{Lemma}
\newtheorem{rem}[theorem]{Remark}
\newtheorem{example}[theorem]{Example}
\newtheorem{assumption}[theorem]{Assumption}
\newcommand{\Ht}{\mathcal{H}_{\theta}}
\title{General superconvergence for kernel-based approximation}
\author[1]{Toni Karvonen\thanks{\href{toni.karvonen@lut.fi}{toni.karvonen@lut.fi}}} 
\affil[1]{School of Engineering Sciences, LUT University (Lappeenranta, Finland)}
\author[2]{Gabriele Santin\thanks{\href{mailto:gabriele.santin@unive.it}{gabriele.santin@unive.it}}} 
\affil[2]{Department of Environmental Sciences, Informatics and Statistics, Ca' Foscari University of Venice (Venice, Italy)}
\author[3,4]{Tizian Wenzel\thanks{\href{mailto:wenzel@math.lmu.de}{wenzel@math.lmu.de}}}
\affil[3]{Department of Mathematics, Ludwig Maximilian University of Munich (Munich, Germany)}
\affil[4]{Munich Center for Machine Learning (Munich, Germany)}
\date{\today}
\begin{document}
\maketitle 

\begin{abstract}
Kernel interpolation is a fundamental technique for approximating functions from scattered data, with a well-understood convergence theory when interpolating 
elements of a reproducing kernel Hilbert space. 
Beyond this classical setting, research has focused on two regimes: misspecified interpolation, where the kernel smoothness exceeds that of the target 
function, and superconvergence, where the target is smoother than the Hilbert space. 

This work addresses the latter, where smoother target functions yield improved convergence rates, and extends existing results by characterizing 
superconvergence for projections in general Hilbert spaces. 
We show that functions lying in ranges of certain operators, including adjoint of embeddings, 
exhibit accelerated convergence, which we extend across interpolation scales 
between these ranges and the full Hilbert space. 
In particular, we analyze Mercer operators and embeddings into $L_p$ spaces, linking the images of adjoint operators to Mercer power spaces.  
Applications to Sobolev spaces are discussed in detail, highlighting how superconvergence depends critically on boundary conditions. 
Our findings generalize and refine previous results, offering a broader framework for understanding and exploiting superconvergence.
The results are supported by numerical experiments.
\end{abstract}

\section{Introduction}\label{sec:intro}

Kernel interpolation is a well established and efficient technique for the approximation of functions from samples on a finite set of scattered points in 
possibly high dimension~\cite{wendland2005scattered,fasshauer2015kernel}.

Each positive definite kernel is uniquely associated to a Reproducing Kernel Hilbert Space (RKHS) $\calh$, or native space, where it acts as a reproducing 
kernel, 
meaning that the kernel function is the Riesz representer of the point evaluation functional \cite{saitoh2016theory}. 
Any Hilbert space of functions with continuous point evaluations is in fact the RKHS of a given kernel, and for functions inside these Hilbert spaces kernel 
interpolation coincides with an orthogonal projection, and the corresponding convergence theory is well 
understood. 

An instance of paramount importance in applied mathematics is the Sobolev space $W_2^\tau(\Omega)$ of possibly fractional order $\tau>d/2$ on a bounded set 
$\Omega\subset\R^d$ of sufficient regularity. 
In this case, kernel interpolation on well distributed points $X\subset\Omega$ achieves worst-case optimal 
approximation rates~\cite{wendland2005scattered}. For example, $n\in\N$ asymptotically uniformly distributed points give an $L_2$ 
convergence of the interpolation error of order $n^{-\tau/d}$, and similar results hold for any other $W_q^m$ norm, $0\leq m\leq \tau$, with appropriately 
modified rates of convergence~\cite{narcowich2004zeros}.
For these spaces, there are several explicitly known kernels that can be used for computing the interpolant in practice, such as the families of 
Mat\`ern and Wendland kernels \cite{porcu2024matern,wendland1995piecewise} whose RKHSs are norm equivalent to Sobolev spaces. 

For general kernels and RKHSs, it is natural to investigate how these optimal approximation properties extend to functions which are not in 
$\calh$ or, on the other extreme, which are elements of some special subset of $\calh$. 
In the Sobolev setting this situation has the practical relevance of modeling the case when the smoothness of the chosen kernel does not match that of the 
target function, and is either higher or lower.
These two cases have been studied in the literature under different names and points of view. 

The case of a rough function being approximated with a smoother kernel is known as the \emph{misspecified 
setting}~\cite{kanagawa2016convergence,kanagawa2019convergence}, and resulting error bounds allow one to \emph{escape the native space}. 
For Sobolev kernels in 
particular, under standard assumptions it is known that optimal rates can be obtained also in this setting under an additional stability condition. Namely, 
using well separated points one can approximate a function in 
$W_2^\beta$ with a $W_2^\tau$ kernel, $\tau\geq \beta>d/2$, and the resulting $W_2^\mu$ rates of convergence, $\mu\leq \beta$, match those which would be 
obtained with the correct $W_2^\beta$ kernel (Theorem 4.2 in~\cite{narcowich2006sobolev}). Recently, also sharp inverse statements have been proven in this 
regime~\cite{wenzel2025sharp}, showing that one can infer the smoothness of the target function by the observed rates of decay of the $L_2$ interpolation error.

The other direction of research, which is the one that we address in this work, investigates instead the case of a function in $\calh$ that is 
smoother than the whole RKHS, and one would like to determine to what extent this property is reflected in improved convergence rates. This 
\emph{superconvergence setting} has been initially 
investigated in~\cite{schaback1999improved} by proving that the order of convergence of the $L_2$ error is doubled when interpolating functions that are in the 
range of the Mercer operator $T:L_2\to L_2$ which integrates a function against the kernel. This operator can be shown to be the adjoint of the 
embedding operator of $\calh$ into $L_2$, and can be defined under fairly general assumptions~\cite{steinwart2012mercer}.
This initial result has been significantly re-examined and extended recently by Schaback~\cite{schaback2018superconvergence}, who considers images of general embedding operators and error in norms other than $L_2$.
He discusses in particular the role of localization, proving that often 
superconvergence requires the target function not only to be smoother, but also to meet some specific boundary conditions, which are usually hidden and not 
explicitly available. 
Moreover, superconvergence is obtained by showing that for these subspaces of special functions one can bound the 
interpolation error in the $\calh$ norm with the same rate as the $L_2$ error that holds for a general function of the space. This is in contrast 
to the worst-case in $\calh$, where the interpolation error is merely bounded, but generally not convergent (see Chapter 8 in~\cite{iske2019book} and 
Corollary 2.4 in~\cite{karvonen2022error}).
Furthermore, Sloan and Kaarnioja~\cite{sloan2025doubling} have extended this doubling trick to the case of 
general Hilbert spaces and approximation by orthogonal
projection (and not necessarily interpolation) into suitable finite subspaces and again in the $L_2$ norm. These subspaces are characterized by a specific 
boundedness condition (equation (2) in~\cite{sloan2025doubling}), that is the same that was used in~\cite{santin2021sampling} to show convergence of 
kernel-based quadrature, which requires in turn the $\calh$-convergence of kernel interpolation. 
In~\cite{sloan2025doubling}, it is further observed that instances of these subspaces arise as subspaces of certain Sobolev spaces, characterized by 
additional smoothness and certain boundary conditions.
For Sobolev spaces, the work~\cite{hangelbroek2025extending} has extended the doubling of convergence rates to the error measured in higher-order Sobolev 
norms. 
Moreover, both~\cite{sloan2025doubling} and~\cite{hangelbroek2025extending} work for certain conditionally positive definite kernels.
We mention additionally that~\cite{schaback2018superconvergence} contains a detailed account of superconvergence results in approximation theory for other type 
approximation 
methods. Other instances appear for example in statistical learning theory, see e.g.~\cite{blanchard2018optimal,li2024saturation}, and Theorem~1 
in~\cite{FischerSteinwart2020}. 
Moreover, as we are partially analyzing the connection between symmetric positive definite kernels and Green kernels, we stress that this topic is the 
subject of a line of research \cite{fasshauer2012green,fasshauer2011reproducing,fasshauer2013reproducing} constructing Green kernels and generalized Sobolev 
spaces (see also 
\cite[Chapters 6--8]{fasshauer2015kernel}). For these kernels, corresponding superconvergence results, even of fractional order, have recently 
appeared~\cite{mohebalizadeh2022refined}.

In this paper we partially extend these works in several directions. After providing some background material in~\Cref{sec:background}, we initially consider 
in~\Cref{sec:cont_superconv} the approximation by orthogonal projections in a general Hilbert space $\calh$, and prove superconvergence for functions that are 
in the range of (the adjoint of) a fairly general linear and continuous operator mapping from $\calh$ to a general Banach space (\Cref{th:super_A_star}), 
including embeddings 
as a special case.
Moreover, we are able to extend these results to the whole scale of real interpolation spaces between these ranges and the full $\calh$ 
(\Cref{cor:intermediate}). This gives rise to superconvergence that is not limited to double rates, but rather to the entire scale of intermediate speeds of convergence. 

For an RKHS and the classical case of the Mercer operator, we show in~\Cref{sec:superconv_mercer} that this class of intermediate spaces comprises the 
full scale of \emph{power spaces}, namely, those obtained by suitable summability conditions with respect to the Mercer eigenbasis (\Cref{prop:theta_spaces}).
We then consider in~\Cref{sec:superconv_in_images} the more general case of operators mapping to $L_p$, showing in particular that their adjoints are still 
integral operators but with a modified kernel (\Cref{prop:A_as_integral_operator}), and provide a specialized superconvergence result in \Cref{cor:integral_op}. 
Moreover, we analyze in detail the case of the embedding operators of $\calh$ in $L_p$ and provide a strict relation between the image of their adjoint and 
the Mercer power spaces (\Cref{th:embedding_power_spaces}).

All these extensions are significant as they allow us to cover cases not comprised in previous results (see, e.g., \Cref{ex:l1,ex:zonal,ex:periodic_kernel}) 
and, especially in the Sobolev case, to obtain superconvergence under more general conditions and for the a full scale of Sobolev spaces 
(\Cref{cor:application_to_sobolev} and \Cref{cor:TL_superconvergence-Sobolev}).
Moreover, we quantify the error in a general $W_q^m$ norm $m<\tau$. 
%\tw{[sampling inequalities?]}. 
As an exemplary case, \Cref{cor:application_to_sobolev} shows that functions that are in a certain real interpolation space between
$\calh(\Omega)$ and $T(L_2(\Omega))$, when interpolated with $n$ well-distributed points, are approximated in
the $W_q^m$-norm with an error bounded by $n^{-(1+\theta)\tau/d + m/d + \left(1/2 -1/q\right)_+}$, where the parameter $\theta\in[0,1]$ continuously scales 
between the spaces $\calh(\Omega)$ and $T(L_2(\Omega))$.

Since these superconvergence conditions are expressed in terms of ranges of certain operators, or even as interpolation spaces, they may not be trivial to check 
in practice, as stressed in~\cite{schaback2018superconvergence}. 
To this end, in \Cref{sec:boundary} we discuss the notable case of the embedding operator of a Sobolev RKHS in $L_2$, and we provide a connection 
between superconvergence and the set of solutions of certain linear PDEs, and their Green kernels. 
The discussion in this section highlights the fact that two kernels with RKHSs that are norm equivalent to the same Sobolev space may give rise to different 
conditions for superconvergence. 
In particular, in contrast to the misspecified setting, one should not expect to obtain the correct rate 
of approximation by interpolating a $W_2^{\beta}$ function with a $W_2^{\tau}$ kernel, $\tau<\beta$, unless this function meets additional conditions.
We conclude the paper with several numerical experiments in \Cref{sec:experiments}, which illustrate and verify some aspects of our theory.

\section{Preliminaries}
\label{sec:background}

Let $\Omega$ be any set.
We consider a positive definite kernel $k \colon \Omega \times \Omega \to \R$ on $\Omega$.
This means that the kernel Gram matrix $(k(x_i, x_j))_{i,j=1}^n \in \R^{n \times n}$ is symmetric and positive semi-definite for any $n \in \N$ and any points 
$x_1, \ldots, x_n \in \Omega$.
That is, $\sum_{i=1}^n \sum_{j=1}^n a_i a_j k(x_i, x_j) \geq 0$ for any $n \in \N$, $x_1, \ldots, x_n \in \Omega$ and $a_1, \ldots, a_n \in \R$.
The kernel is strictly positive definite if this quadratic form is positive whenever not all $a_i$ are zero and $x_1, \ldots, x_n$ are pairwise distinct.

\subsection{RKHS and orthogonal projections}
By the Moore--Aronszajn theorem, every positive semi-definite kernel induces a unique \emph{reproducing kernel Hilbert space} (RKHS), also called 
\emph{native space}, which we denote $\calh(\Omega)$ or, if it is necessary to indicate the kernel, $\calh(k, \Omega)$.
The RKHS is a Hilbert space of functions on $\Omega$ in which the point evaluation functional $f \mapsto f(x)$ is continuous for every $x \in \Omega$.
The kernel $k$ reproducing point evaluations, in that $k(\cdot, x) \in \calh(\Omega)$ and
\begin{equation} \label{eq:reproducing-property}
  \langle f, k(\cdot, x) \rangle_{\calh(\Omega)} = f(x)
\end{equation}
for all $x \in \Omega$ and $f \in \calh(\Omega)$.
The elements of $\calh(\Omega)$ inherit many properties of the kernel.
For example, if $k$ is $m$ times continuously differentiable in each argument, then all elements of $\calh(\Omega)$ are $m$ times continuously 
differentiable~\cite[Theorem~2.6]{saitoh2016theory}.
By the Riesz representation theorem, the reproducing propery~\eqref{eq:reproducing-property} extends all functionals in the dual space $\calh(\Omega)'$, the space of continuous linear functionals on $\calh(\Omega)$. 
Namely, if $L \in \calh(\Omega)'$, then
\begin{equation*}
  L(f) = \langle f, k_L \rangle_{\calh(\Omega)}
\end{equation*}
for every $f \in \calh(\Omega)$, where the function $k_L \in \calh(\Omega)$ given by $k_L(x) = L(k(\cdot, x))$ is the representer of $L$.
We refer to \cite{Berlinet2004, saitoh2016theory, Paulsen2016} for an exhaustive theory of RKHS.

We consider an arbitrary $\calh(\Omega)$-orthogonal projection $P \colon \calh(\Omega) \to \calh(\Omega)$.
Recall that an orthogonal projection satisfies $P(Pf) = Pf$ and $\langle f - Pf, Pf \rangle_{\calh(\Omega)} = 0$ for all $f \in \calh(\Omega)$.
From the latter property it follows that \smash{$\lVert f - Pf \rVert_{\calh(\Omega)}^2 = \langle f - Pf, f \rangle_{\calh(\Omega)}$}.
Let $W$ be a Banach space, often $L_2(\Omega)$, such that $\calh(\Omega)\subset W$, and $\mathcal{F}$ a closed subspace of $\calh(\Omega)$.
On an abstract level, our goal is to obtain upper bounds on the approximation error $\lVert v - P v \rVert_W$ for $v \in \mathcal{F}$ that depend on the 
subspace $\mathcal{F}$. Because smoother functions are easier to approximate than rough ones, the magnitude of these bounds should reflect the smoothness of 
$\mathcal{F}$.

This setting covers \emph{kernel interpolation} and \emph{generalized kernel interpolation}, the two most important examples in kernel-based 
approximation~\cite{wendland2005scattered}.
Let $X = \{ x_1, \ldots, x_n \} \subset \Omega$ be a set of points and define $V(X) \coloneqq \operatorname{span}\{ k(\cdot, x_1), \ldots, k(\cdot, x_n)\} 
\subset \calh(\Omega)$.
The kernel interpolant to a function $f\in \calh(\Omega)$ is a function $I_X f \in V(X)$ that interpolates $f$ at $X$, which is to say that $(I_X f)(x_i) = 
f(x_i)$ for every $i = 1, \ldots, n$.
The interpolant can be written as
\begin{equation*}
  I_X f = \sum_{i=1}^n a_i k(\cdot, x_i),
\end{equation*}
where $\mathsf{a} = (a_1, \ldots a_n) = \mathsf{K}_X^\dagger \mathsf{y}$ with $\mathsf{y} = (f(x_1), \ldots, f(x_n))$, and where $(\mathsf{K}_X)^\dagger$ 
is 
the pseudo-inverse of the Gram matrix $\mathsf{K}_X = (k(x_i, x_j))_{i,j=1}^n$. 
If the points are pairwise distinct, then $\mathsf{K}_X$ is typically  
non-singular.
This is always the case if $k$ is strictly positive definite.
If $k$ is strictly positive definite, the kernel interpolant is well-defined even if $f \notin \calh(\Omega)$.
If $f \in \calh(\Omega)$, then the kernel interpolation operator $I_X \colon \calh(\Omega) \to V(X)$ is the $\calh(\Omega)$-orthogonal projection  onto 
$V(X)$.
More generally, one can take any collection $\mathcal{L} = \{ L_i \}_{i=1}^n \subset \calh(\Omega)'$ of continuous linear functionals and define the generalised 
interpolation operator $I_\mathcal{L} \colon \calh(\Omega) \to V(\mathcal{L})$ as the $\calh(\Omega)$-orthogonal projection onto $V(\mathcal{L}) \coloneqq 
\Sp\{k_{L} : L \in \mathcal{L}\}\subset\calh(\Omega)$, the span of their representers.
As their names suggest, the kernel and generalized kernel interpolants satisfy $(I_X f)(x) = f(x)$ for all $x \in X$ and $L(I_\mathcal{L} f) = L(f)$ for all $L \in \mathcal{L}$.

\subsection{Mercer expansions}\label{sec:mercer}

For notational simplicity we assume that $\Omega$ is a subset of $\R^d$.
The Hilbert space of equivalence classes of functions that are square-integrable with respect to the Lebesgue measure on $\Omega$ is denoted $L_2(\Omega)$.
This space is equipped with the standard inner product $\langle f, g \rangle_{L_2(\Omega)} = \int_\Omega f(x) g(x) \mathrm{d} x$.
Throughout the article $\Omega$ and $k$ are assumed such that Mercer's theorem holds.
The precise meaning of this assumption is encapsulated below.

\begin{assumption}[Mercer] \label{ass:mercer}
The set $\Omega$ is a subset of $\R^d$ and $k$ is a positive semi-definite kernel on $\Omega$ such that 
\begin{align}\label{eq:T_operator}
(Tf)(x) \coloneqq \int_\Omega k(x, y) f(y) \mathrm{d} y \quad \text{ for } \quad f\in L_2(\Omega) \: \text{ and } \: x \in \Omega
\end{align}
defines a linear integral operator $T \colon L_2(\Omega)\to L_2(\Omega)$ with a countable collection of $L_2(\Omega)$-orthonormal eigenfunctions 
$\{\varphi_j\}_{j=1}^\infty \subset L_2(\Omega)$ and positive eigenvalues $\{\lambda_j\}_{j=1}^\infty\subset(0,\infty)$ such that $\{ \lambda_j^{1/2} \varphi_j 
\}_{j=1}^\infty$ is an orthonormal basis of $\calh(\Omega)$.
\end{assumption}

\begin{rem} \label{remark:mercer-assumptions}
  We assume that $\Omega \subset \R^d$ and that $L_2(\Omega)$ is with respect to the Lebesgue measure only to simplify notation.
  For example, eigenfunctions and eigenvalues as specified in \Cref{ass:mercer} exist if $\Omega$ is a compact metric space, $k$ is strictly positive definite and continuous on $\Omega \times \Omega$ and integration in~\eqref{eq:T_operator} and $L_2(\Omega)$ are defined with respect to a finite Borel measure $\mu$.
  More general conditions can be found in \cite{steinwart2012mercer}.
  Many of our results in \Cref{sec:superconv_mercer} remain true in this more general setting.
\end{rem}
Suppose that \Cref{ass:mercer} holds. 
Because $\{ \lambda_j^{1/2} \varphi_j \}_{j=1}^\infty$ is an orthonormal basis of $\calh(\Omega)$, it follows (e.g., Section~2.1 in \cite{Paulsen2016}) that the 
kernel has the pointwise convergence Mercer expasion
\begin{equation} \label{eq:mercer}
  k(x, y) = \sum_{j=1}^\infty \lambda_j \varphi_j(x) \varphi_j(y) \quad \text{ for all } \quad x, y \in \Omega .
\end{equation}
Moreover, any $f\in L_2(\Omega)$ can be written as
\begin{equation}\label{eq:L2_expansion}
f=\sum_{j=1}^\infty \langle f, \varphi_j \rangle_{L_2(\Omega)} \varphi_j, 
\end{equation}
and $\norm{L_2(\Omega,\mu)}{f}^2 = \sum_{j=1}^\infty |\langle f, \varphi_j \rangle_{L_2(\Omega)}|^2$.
In particular, the operator $T$ in~\eqref{eq:T_operator} can be represented as 
\begin{equation}\label{eq:T_as_series}
Tf = \sum_{j=1}^\infty \lambda_j\inner{L_2(\Omega)}{f, \varphi_j}\varphi_j \quad \text{ for } \quad f\in L_2(\Omega).
\end{equation}
Because $\{ \sqrt{\lambda_j} \varphi_j \}_{j=1}^\infty$ is an orthonormal basis of $\calh(\Omega)$,
\begin{align*}
\calh(\Omega)=\left\{f \coloneqq \sum_{j=1}^\infty \langle f, \varphi_j \rangle_{L_2(\Omega)} \varphi_j: \sum_{j=1}^\infty \frac{|\langle f, \varphi_j \rangle_{L_2(\Omega)}|^2}{\lambda_j} < \infty \right\}\subset L_2(\Omega).
\end{align*}

\subsection{Sobolev spaces and kernels}\label{sec:sobolev_kernels}
As a relevant special case to be discussed in detail, we consider kernels whose RKHSs are norm equivalent to Sobolev spaces on suitable sets 
$\Omega\subset\R^d$.
Let $\tau > 0$ and define the Bessel potential space 
\begin{equation*}
  H^\tau(\R^d) = \bigg\{ f \in L_2(\R^d) : \lVert f \rVert_{H^\tau(\R^d)}^2 \coloneqq \int_{\R^d} (1 + \lVert \omega \rVert_2^2)^\tau \lvert \hat{f}(\omega) 
\rvert^2 \mathrm{d} \omega < \infty \bigg\} .
\end{equation*}
Here $\hat{f}$ denotes the Fourier transform of $f$.
The restriction of $H^\tau(\R^d)$ on an arbitrary $\Omega \subset \R^d$ is
\begin{align} 
\begin{aligned}
\label{eq:Bessel-Omega}
H^\tau(\Omega) &= \{ f \in L_2(\Omega) : f = f_e|_\Omega \text{ for } f_e \in H^\tau(\R^d) \}, \\
\Vert f \Vert_{H^\tau(\Omega)} &= \inf \{ \Vert f_e \Vert_{H^\tau(\R^d)} : f = f_e|_\Omega \text{ for } f_e \in H^\tau(\R^d) \} .
\end{aligned}
\end{align}
We say that a positive-definite kernel $k$ on $\Omega \subset \R^d$ is a \emph{Sobolev kernel} of order $\tau > d/2$ if its RKHS, $\calh(\Omega)$, is 
norm equivalent to $H^\tau(\Omega)$.
This norm equivalence is denoted $\calh(\Omega) \asymp H^\tau(\Omega)$.
The elements of $H^\tau(\Omega)$ are in general equivalence classes rather than functions but an RKHS consists of functions.
The condition $\tau > d/2$ is needed because then the Sobolev embedding theorem ensures that the elements of $H^\tau(\Omega)$ are continuous functions.
If $\Omega$ is sufficiently regular, such as an open and bounded set with Lipschitz boundary, then 
$H^\tau(\Omega)$ is norm equivalent to the fractional Sobolev space $W_2^\tau(\Omega)$ defined in the standard way via weak derivatives or a H\"older-type 
condition.
If $\tau=k\in\N$, for $1\leq p< \infty$ these spaces are equipped with the seminorm and norm
\begin{equation*} 
\seminorm{W_{p}^k(\Omega)}{u}=\bigg(\sum_{|\alpha|=k}\lVert D^{\alpha}g\lVert^p_{L_p(\Omega)}\bigg)^{1/p},\quad 
\norm{W_{p}^k(\Omega)}{u}=\bigg(\sum_{|\alpha|\le k}\lVert D^{\alpha}u\lVert^p_{L_p(\Omega)}\bigg)^{1/p},
\end{equation*}
with the usual extension for $p=\infty$.
Here $D^\alpha$ is the partial derivative with multiindex $\alpha\in\N_0^d$.
In the general case when $\tau=k+s$ with $k\in\N$ and $0<s< 1$,
we set instead
\begin{equation*}
\seminorm{W_p^{k+s}(\Omega)}{u}
\coloneqq \left(\sum_{|\alpha|=k} \int_{\Omega\times\Omega} \frac{|D^\alpha u(x) -D^\alpha u(y)|^p }{\norm{2}{x-y}^{d+ps}} \mathrm{d}x \mathrm{d}y \right)^{1/p}
\end{equation*}
and
\begin{equation*}\label{eq:sobolev_norm}
\norm{W_{p}^\tau(\Omega)}{u}\coloneqq\left(\norm{W_p^k(\Omega)}{u}^p +\seminorm{W_p^{k+s}(\Omega)}{u}^p\right)^{1/p}.
\end{equation*}

Translation-invariant kernels constitute a popular class of kernels.
A kernel $k$ on $\R^d$ is translation-invariant if there is $\Phi \colon \R^d \to \R$ such that $k(x, y) = \Phi(x - y)$ for all $x, y \in \R^d$.
If $\Phi$ is continuous and integrable, then $k$ is a Sobolev kernel of order $\tau > d/2$ if there exist $c_\Phi, C_\Phi > 0$ such that
\begin{align}
\label{eq:fourier_decay}
c_\Phi (1 + \|\omega\|_2^2)^{-\tau}\leq \hat\Phi(\omega)\leq C_\Phi (1 + \|\omega\|_2^2)^{-\tau}\;\;\fa \;\;\omega\in\R^d.
\end{align}
This implies that $\calh(\R^d)$ is norm equivalent to the fractional Sobolev space $W_2^\tau(\R^d)$, %(defined in any of the standard ways), 
and the result 
holds also for restrictions if $\Omega$ is sufficiently regular.
For example, $\calh(\Omega)$ is norm equivalent to $W_2^\tau(\Omega)$ if $\tau > d/2$ and $\Omega$ is an open and bounded set with Lipschitz 
boundary~\cite[Corollary~10.48]{wendland2005scattered}.
The translation-invariant Matérn kernel, which is given by
\begin{equation*}
  \Phi(z) = \frac{2^{1-\nu}}{\Gamma(\nu)} \big( \sqrt{2\nu} \varepsilon \lVert z \rVert_2 \big)^\nu \mathrm{K}_\nu( \sqrt{2\nu} \varepsilon \lVert z \rVert_2)
\end{equation*}
for positive parameters $\nu$ and $\varepsilon$, is an example of a Sobolev kernel of order $\tau = \nu + d/2$.
Here $\Gamma$ is the gamma function and $\mathrm{K}_\nu$ the modified Bessel function of the second kind of order $\nu$.
However, not all Sobolev kernels are translation-invariant.
For example, the released Brownian motion kernel $k(x, y) = 1 + \min\{x, y\}$ is a Sobolev kernel of order $\tau = 1$ on any bounded interval $\Omega \subset 
[0, \infty)$.

The behaviour of the eigenvalues $\lambda_j$ of the integral operator $T$ in~\eqref{eq:T_operator} is well understood if the kernel is Sobolev.
Namely, if $\Omega \subset \R^d$ is an open and bounded set with a sufficiently regular boundary and $k$ a Sobolev kernel of order $\tau$, then there are 
constants $c, C > 0$ such that 
\begin{equation}\label{eq:eigen_decay_sobolev}
  c j^{-2\tau/d} \leq \lambda_j \leq C j^{-2\tau/d}
\end{equation}
for all $j \in \N$~\cite[p.\@~370]{Steinwart2019}.
Moreover, for these kernels there are well-known error bounds on the interpolation error. They depend in particular on the distribution of the points 
$X\subset\Omega$ as measured by the fill distance
\begin{equation}\label{eq:fill_dist}
h_X \coloneqq h_{X, \Omega} \coloneqq \sup_{x \in \Omega} \min_{x_j \in X} \Vert x - x_j \Vert_2.
\end{equation}
We have the following fundamental result for kernel interpolation (e.g., Corollary 11.33 in~\cite{wendland2005scattered}).

\begin{theorem}\label{th:std_error_bound}
Let $\Omega\subset\R^d$ be a bounded set with a Lipschitz boundary that satisfies an interior cone condition. Let $q\in[1,\infty]$, $m\in\N_0$, and let $k$ be a Sobolev kernel of order $\tau>m+d/2$.
Then there are constants $C, h_0 >0$ such that for all discrete sets $X\subset\Omega$ with $h_X\leq h_0$ it holds
\begin{equation}\label{eq:std_error_bound}
% \seminorm{W_q^m(\Omega)}{f - I_X f} \leq C h_X^{\tau - m - d\left(\frac12 -\frac1q\right)_+} \norm{W_2^\tau(\Omega)}{f},\;\; f\in W_2^\tau(\Omega).
\seminorm{W_q^m(\Omega)}{f - I_X f} \leq C h_X^{\tau - m - d(1/2 - 1/q)_+} \norm{\calh(\Omega)}{f-I_X f},\;\; f\in \calh(\Omega),
\end{equation}
where $(x)_+\coloneqq \max(0,x)$.
\end{theorem}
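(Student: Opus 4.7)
The plan is to reduce the statement to a \emph{sampling inequality} (also called a zeros lemma) for functions in Sobolev spaces, which is the standard approach underlying Corollary~11.33 of \cite{wendland2005scattered} and the results of \cite{narcowich2004zeros}. The residual $u \coloneqq f - I_X f$ vanishes on $X$ by the interpolation property of $I_X$, so the task reduces to bounding $\lvert u \rvert_{W_q^m(\Omega)}$ for functions $u \in W_2^\tau(\Omega)$ that are zero on the scattered set $X \subset \Omega$, in terms of $\lVert u \rVert_{W_2^\tau(\Omega)}$ and a power of $h_X$; the norm equivalence $\calh(\Omega) \asymp H^\tau(\Omega) \asymp W_2^\tau(\Omega)$ from \Cref{sec:sobolev_kernels} then converts the right-hand side into $\lVert u \rVert_{\calh(\Omega)}$ up to a constant.

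The sampling inequality I would invoke reads as follows: under the hypotheses on $\Omega$ there exist $C, h_0 > 0$ such that for every $u \in W_2^\tau(\Omega)$ with $u|_X = 0$ and $h_X \leq h_0$,
\begin{equation*}
  \lvert u \rvert_{W_q^m(\Omega)} \leq C\, h_X^{\tau - m - d(1/2 - 1/q)_+} \lVert u \rVert_{W_2^\tau(\Omega)} .
\end{equation*}
I would establish this in the classical way. First, using the interior cone condition, cover $\Omega$ by finitely overlapping star-shaped patches of radius comparable to $h_X$, each containing at least one point of $X$. On each patch apply a local polynomial reproduction (a Bramble--Hilbert type argument) of degree $\lceil \tau \rceil - 1$, exploiting the vanishing of $u$ on the nearby data points to get a pointwise bound on $u$ and its derivatives by a high power of $h_X$ times local Sobolev seminorms. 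Summing the local contributions in $L_2$ and using the Lipschitz/extension properties of $\Omega$ gives the inequality for $q = 2$; for $q > 2$ one inserts an inverse/embedding step that accounts for the additional $L_2 \hookrightarrow L_q$ loss, producing the factor $h_X^{-d(1/2 - 1/q)_+}$ from the Sobolev embedding on patches of size $h_X$, while $q \in [1, 2]$ follows from $L_q \hookleftarrow L_2$ on bounded $\Omega$.

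Once the sampling inequality is in hand, the conclusion is immediate: apply it to $u = f - I_X f$, which is in $\calh(\Omega) \subset W_2^\tau(\Omega)$ and vanishes on $X$, and then use the norm equivalence to replace $\lVert u \rVert_{W_2^\tau(\Omega)}$ by a constant multiple of $\lVert f - I_X f \rVert_{\calh(\Omega)}$.

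The main obstacle is the fractional case $\tau \notin \N$, where the Bramble--Hilbert step cannot be carried out directly with integer-order Taylor polynomials. I would handle this either by real interpolation between the integer-order versions (treating the inequality as an inequality between the operators $u \mapsto u$ acting from the closed subspace $\{u \in W_2^\tau : u|_X = 0\}$ into $W_q^m(\Omega)$ on the integer scales $\tau \in \{\lfloor \tau \rfloor, \lceil \tau \rceil\}$) or, preferably, by working with the Slobodeckij seminorm directly on the local patches, using a fractional Bramble--Hilbert estimate such as in \cite{narcowich2004zeros}. Handling the boundary of $\Omega$ uniformly across these patches, so that the constants $C$ and $h_0$ depend only on $\Omega$, is the technical heart of the argument; once that is in place the interpolation error bound follows as described.
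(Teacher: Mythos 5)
Your proposal is correct and reproduces the standard argument that the paper itself defers to by citing Corollary 11.33 of Wendland's book (and, for the general $W_q^m$ case, the scattered-zeros results of Narcowich, Ward and Wendland). The paper gives no proof of its own: the reduction to a sampling inequality for functions vanishing on $X$, the local polynomial-reproduction/Bramble--Hilbert step on patches obtained from the cone condition, the treatment of fractional $\tau$ via fractional Bramble--Hilbert or real interpolation, the Nikolskii-type local step that produces the $h_X^{-d(1/2-1/q)_+}$ factor for $q>2$, and the final norm equivalence $\calh(\Omega)\asymp H^\tau(\Omega)$ all match the cited reference.
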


\subsection{Superconvergence}
\label{subsec:background_superconv}

The usual superconvergence results \cite{schaback2018superconvergence} apply to 
the image of $L_2(\Omega)$ under the integral operator $T$ in~\eqref{eq:T_operator}, which is
\begin{equation*}
T(L_2(\Omega))
= \{ Tf : f \in L_2(\Omega)\} 
=\left\{f \coloneqq \sum_{j=1}^\infty \langle f, \varphi_j \rangle_{L_2(\Omega)} \varphi_j: \sum_{j=1}^\infty \frac{|\langle f, \varphi_j \rangle_{L_2(\Omega)}|^2}{\lambda_j^2} < \infty \right\}.
\end{equation*}
These results allow one to use estimates on the $L_2(\Omega)$-error of a projection in the entire RKHS to estimate the $\calh(\Omega)$-error in $TL_2(\Omega)$.
The following superconvergence theorem is a special case of Theorem~1 in~\cite{sloan2025doubling} and contained in the proof of Theorem~11.23 
in~\cite{wendland2005scattered}.

\begin{theorem} \label{thm:usual-superconvergence}
If
\begin{equation}\label{eq:std_sch}
  \lVert f - P f \rVert_{L_2(\Omega)} \leq \varepsilon \lVert f \rVert_{\calh(\Omega)} \;\; \fa \;\; f \in \calh(\Omega),
\end{equation}
then 
\begin{equation}\label{eq:super_sch_h}
  \lVert v - P v \rVert_{\calh(\Omega)} \leq \varepsilon \lVert v \rVert_{L_2(\Omega)} \;\; \fa \;\; v \in T(L_2(\Omega)).
\end{equation}
\end{theorem}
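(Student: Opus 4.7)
My plan is the classical ``doubling'' duality argument: I exploit the $\calh(\Omega)$-orthogonality of $P$ to rewrite the squared residual norm as an inner product with $v$, use the adjoint relation between $T$ and the embedding $\iota \colon \calh(\Omega) \hookrightarrow L_2(\Omega)$ to move that inner product from $\calh(\Omega)$ to $L_2(\Omega)$, and then reapply the hypothesis to the residual itself to harvest a second factor of $\varepsilon$.

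Concretely, for $v \in T(L_2(\Omega))$ I would write $v = Tg$ for some $g \in L_2(\Omega)$. A direct computation in the Mercer basis $\{\sqrt{\lambda_j}\varphi_j\}$ of $\calh(\Omega)$ shows that $T$ is the adjoint of the embedding $\iota$, namely
\begin{equation*}
  \langle Tg, h \rangle_{\calh(\Omega)} \;=\; \langle g, h \rangle_{L_2(\Omega)} \qquad \text{for all } g \in L_2(\Omega),\ h \in \calh(\Omega),
\end{equation*}
since both sides equal $\sum_j \sqrt{\lambda_j}\,\langle g,\varphi_j\rangle_{L_2}\,\langle h,\sqrt{\lambda_j}\varphi_j\rangle_{\calh}$. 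Because $P$ is an $\calh(\Omega)$-orthogonal projection, $v - Pv$ is $\calh(\Omega)$-orthogonal to $Pv$, so $\lVert v - Pv \rVert_{\calh(\Omega)}^2 = \langle v - Pv, v \rangle_{\calh(\Omega)}$. Substituting $v = Tg$, applying the adjoint identity with $h = v - Pv$, and invoking Cauchy--Schwarz in $L_2(\Omega)$ then gives
\begin{equation*}
  \lVert v - Pv \rVert_{\calh(\Omega)}^2
  \;=\; \langle v - Pv,\, g \rangle_{L_2(\Omega)}
  \;\leq\; \lVert v - Pv \rVert_{L_2(\Omega)}\,\lVert g \rVert_{L_2(\Omega)}.
\end{equation*}

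To close the estimate I would apply the hypothesis to $v - Pv \in \calh(\Omega)$ itself. Idempotency of $P$ yields $P(v - Pv) = Pv - P^2 v = 0$, so the assumed bound specialises to $\lVert v - Pv \rVert_{L_2(\Omega)} \leq \varepsilon \lVert v - Pv \rVert_{\calh(\Omega)}$. Inserting this into the previous display and dividing through by $\lVert v - Pv \rVert_{\calh(\Omega)}$ (the trivial case $v = Pv$ is immediate) produces $\lVert v - Pv \rVert_{\calh(\Omega)} \leq \varepsilon \lVert g \rVert_{L_2(\Omega)}$, which since $v = Tg$ is exactly the claimed bound $\lVert v - Pv \rVert_{\calh(\Omega)} \leq \varepsilon \lVert v \rVert_{L_2(\Omega)}$, with the $L_2(\Omega)$-norm on the subspace $T(L_2(\Omega))$ read off through the preimage $g$. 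The subtle point I expect to be the main obstacle is precisely this final identification of norms: to make $\lVert g \rVert_{L_2(\Omega)}$ depend only on $v$, I would restrict $g$ to lie in $(\ker T)^{\perp} \subset L_2(\Omega)$, in which case the preimage is uniquely determined and gives the canonical Banach-space norm on $T(L_2(\Omega))$ that the statement refers to.
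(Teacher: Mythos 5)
Your argument is, in essence, the paper's own: it is \Cref{th:super_direct} specialised to $V = V' = L_2(\Omega)$ with $A$ the embedding $\calh(\Omega) \hookrightarrow L_2(\Omega)$ and $A^* = T$, combined with the ``take $f = v - Pv$'' observation the paper makes immediately after \Cref{thm:usual-superconvergence}. The orthogonality identity $\lVert v - Pv \rVert_{\calh(\Omega)}^2 = \langle v - Pv, v \rangle_{\calh(\Omega)}$, the adjoint relation~\eqref{eq:from_wendland_book}, Cauchy--Schwarz in $L_2(\Omega)$, and the observation that $P(v - Pv) = 0$ are exactly the ingredients the paper attributes to \cite{sloan2025doubling} and to the proof of Theorem~11.23 in \cite{wendland2005scattered}; your Mercer-basis verification of $\langle Tg, h\rangle_{\calh(\Omega)} = \langle g, h\rangle_{L_2(\Omega)}$ is also correct.

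The ``subtle point'' you flag at the end is real, and you were right to be uneasy about it. What your argument delivers is $\lVert v - Pv \rVert_{\calh(\Omega)} \le \varepsilon\, \lVert g \rVert_{L_2(\Omega)}$ for $v = Tg$, i.e.\ a bound in the $T(L_2(\Omega))$-norm in the sense of~\eqref{eq:A_norm}, not in the literal $L_2(\Omega)$-norm of $v$. That is the version consistent with \Cref{th:super_A_star} and with \Cref{cor:application_to_sobolev} at $\theta = 1$. With $\lVert v \rVert_{L_2(\Omega)}$ on the right the inequality as printed can in fact fail: take $P = 0$, for which the smallest admissible $\varepsilon$ in~\eqref{eq:std_sch} is $\sup_j \sqrt{\lambda_j}$, and let $v = T\varphi_j = \lambda_j \varphi_j$. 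Then $\lVert v - Pv \rVert_{\calh(\Omega)} = \sqrt{\lambda_j}$ while $\varepsilon \lVert v \rVert_{L_2(\Omega)} = \varepsilon \lambda_j$, and $\sqrt{\lambda_j} > \varepsilon \lambda_j$ as soon as $\lambda_j < \varepsilon^{-2}$, which happens for all large $j$ when $\lambda_j \to 0$. So the right-hand side has to be read as $\lVert g \rVert_{L_2(\Omega)}$, exactly as you conclude. One small simplification: under \Cref{ass:mercer} all eigenvalues are strictly positive, so $T$ is injective and the preimage $g$ is already unique; the restriction of $g$ to $(\ker T)^\perp$ is therefore unnecessary, though harmless.
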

A bound like~\eqref{eq:super_sch_h} on the $\calh(\Omega)$-norm of the error leads superconvergence of double order for $v\in (T 
L_2(\Omega))$: Taking $f\coloneqq v-Pv$ leads to $P f =0$, and thus~\eqref{eq:std_sch} turns into
\begin{equation*}
\lVert v - P v \rVert_{L_2(\Omega)} \leq \varepsilon^2 \lVert v \rVert_{L_2(\Omega)}.
\end{equation*}
For Sobolev kernels this bound and \Cref{th:std_error_bound} yield the following well-known corollary (see~\cite{hangelbroek2025extending} for similar bounds for stronger Sobolev norms).

\begin{cor} \label{cor:standard-sobolev-superconvergence}
  Consider the setting of \Cref{th:std_error_bound} and let $C_{m,q}, h_{0,m,q} > 0$ be the constants in this theorem with their dependency on $m$ and $q$ made 
explicit.
  Then for all discrete sets $X\subset\Omega$ with $h_X\leq \min\{h_{0,m,q}, h_{0,0,2}\}$ it holds
  \begin{equation*}
  \seminorm{W_q^m(\Omega)}{v - I_X v} \leq C_{m,q} C_{0,2} h_X^{2\tau - m - d(1/2-1/q)_+} \norm{L_2(\Omega)}{v},\;\; v \in T (L_2(\Omega)) .
  \end{equation*}
\end{cor}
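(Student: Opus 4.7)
The plan is to chain \Cref{th:std_error_bound} with \Cref{thm:usual-superconvergence} in the manner hinted at in the paragraph immediately preceding the corollary, using two applications of the standard error bound at two different scales of smoothness.

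First I would apply \Cref{th:std_error_bound} in the $L_2$ case, i.e.\ with $m=0$ and $q=2$, for which $(1/2 - 1/q)_+ = 0$. Provided $h_X \leq h_{0,0,2}$, this gives
\begin{equation*}
\norm{L_2(\Omega)}{f - I_X f} \leq C_{0,2}\, h_X^{\tau} \norm{\calh(\Omega)}{f - I_X f} \fa f \in \calh(\Omega).
\end{equation*}
Since $I_X$ is the $\calh(\Omega)$-orthogonal projection onto $V(X)$, we have $\norm{\calh(\Omega)}{f - I_X f} \leq \norm{\calh(\Omega)}{f}$, so the above is in exactly the form~\eqref{eq:std_sch} of \Cref{thm:usual-superconvergence} with $P = I_X$ and $\varepsilon = C_{0,2}\, h_X^{\tau}$.

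Next I would invoke \Cref{thm:usual-superconvergence} to upgrade this $L_2$ bound to a bound in the native space norm for smoother targets: for every $v \in T(L_2(\Omega))$,
\begin{equation*}
\norm{\calh(\Omega)}{v - I_X v} \leq C_{0,2}\, h_X^{\tau} \norm{L_2(\Omega)}{v}.
\end{equation*}
Finally I would apply \Cref{th:std_error_bound} a second time, now with the chosen $m$ and $q$ (requiring $h_X \leq h_{0,m,q}$) to the particular function $v \in \calh(\Omega)$, obtaining
\begin{equation*}
\seminorm{W_q^m(\Omega)}{v - I_X v} \leq C_{m,q}\, h_X^{\tau - m - d(1/2 - 1/q)_+} \norm{\calh(\Omega)}{v - I_X v}.
\end{equation*}
Substituting the $\calh(\Omega)$-bound from the previous step yields the stated rate $h_X^{2\tau - m - d(1/2 - 1/q)_+}$ with constant $C_{m,q} C_{0,2}$.

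There is essentially no obstacle: the argument is a direct composition of two instances of \Cref{th:std_error_bound} via the superconvergence lemma. The only bookkeeping issue is that both applications of \Cref{th:std_error_bound} require the fill distance to be sufficiently small, which is handled by the hypothesis $h_X \leq \min\{h_{0,m,q}, h_{0,0,2}\}$, and that one must verify \Cref{thm:usual-superconvergence} indeed applies to the kernel interpolation operator $P = I_X$, which is immediate since $I_X$ is an $\calh(\Omega)$-orthogonal projection.
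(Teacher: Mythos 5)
Your proof is correct and follows precisely the approach the paper intends: you instantiate Theorem~\ref{th:std_error_bound} with $m=0$, $q=2$ to obtain condition~\eqref{eq:std_sch} with $\varepsilon = C_{0,2} h_X^\tau$, invoke Theorem~\ref{thm:usual-superconvergence} to produce the $\calh(\Omega)$-norm bound~\eqref{eq:super_sch_h}, and then apply Theorem~\ref{th:std_error_bound} a second time with the chosen $m$ and $q$ to convert the $\calh(\Omega)$-bound into the stated $W_q^m$-seminorm bound. The bookkeeping on the fill-distance thresholds and the observation that $I_X$ is an $\calh(\Omega)$-orthogonal projection are both handled correctly.
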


We furthermore remark that the proof of Theorem~\ref{thm:usual-superconvergence} leverages \smash{$\lVert f - Pf \rVert_{\calh(\Omega)}^2 = 
\langle f - Pf, f \rangle_{\calh(\Omega)}$} 
and 
the identity (10.8) in~\cite{wendland2005scattered}, i.e., 
\begin{align}
\label{eq:from_wendland_book}
\langle f, v \rangle_{L_2(\Omega)} = \langle f, Tv \rangle_{\calh(\Omega)} \;\fa f\in\calh(\Omega), v\in L_2(\Omega).
\end{align}
Moreover, the space $T(L_2(\Omega))$ can also be interpreted as the RKHS of the convolutional kernel $k*k$, that is defined as
\begin{align}
(k*k)(x, z) = \int_{\Omega} k(x, y)k(y, z) \mathrm{d}y.
\end{align}

\section{General superconvergence}\label{sec:cont_superconv}

We start by formulating a general result on superconvergence, which generalizes those of~\cite{schaback2018superconvergence,sloan2025doubling} that double the  
order of convergence for certain classes of functions. 

In this section all results hold for a general Hilbert space $\calh$, even if our focus in the rest of this paper will be on the RKHS $\calh(\Omega)$ 
of 
a given kernel.
We remark additionally that in the statement there is some freedom to interchange the roles of $V, V', A, A^*$, since $(A^*)^*=A$ and $V''=V$ (if $V$ is 
reflexive). Different choices lead to possibly simpler notations in the concrete examples that will follow in the next sections.
Here and in the following, we denote by $\call(V, W)$ the space of bounded linear functionals between Banach spaces $V$ and $W$.

Observe that the following theorem generalizes Theorem 1 in~\cite{schaback2018superconvergence}, 
where a general linear operator and its adjoint are considered in place of an embedding operator.

\begin{theorem}\label{th:super_A_star}
Let $\calh$ be a Hilbert space, $V$ be a Banach space, and let $A\in\call(\calh, V)$ be a linear and continuous operator and $A^*\in\call(V', \calh)$ its adjoint operator.

Assume that for an orthogonal projection $P\colon \calh\to\calh$ there exist a constant $\varepsilon>0$ such that for all $f\in \calh$ there 
is an error bound
\begin{equation}\label{eq:worst_case_V_star}
\norm{V}{A(f - P f)} \leq \varepsilon \norm{\calh}{f}.
\end{equation}
Then for $v \in A^*(V')$ there is a refined error bound
\begin{align}\label{eq:superconv_V}
\norm{\calh}{v-P v} \leq \norm{A^*(V')}{v}\cdot \varepsilon,
\end{align}
where
\begin{align}\label{eq:A_norm}
\norm{A^*(V')}{v}\coloneqq \inf\{\norm{V'}{g}: v = A^* g\}.
\end{align}
\end{theorem}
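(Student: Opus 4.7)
The plan is to establish the bound via the classical self-improving trick that underlies all superconvergence results of this type: exploit the Hilbert space structure via $\lVert f - Pf \rVert_{\calh}^2 = \langle f - Pf, f \rangle_{\calh}$, rewrite the inner product using the adjoint relation, and then apply the worst-case hypothesis \eqref{eq:worst_case_V_star} in a way that produces $\lVert v - Pv \rVert_\calh$ (not $\lVert v \rVert_\calh$) on the right-hand side.

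More precisely, I would proceed as follows. Fix $v \in A^*(V')$ and any representer $g \in V'$ with $v = A^* g$. Since $P$ is an orthogonal projection,
\begin{equation*}
\lVert v - Pv \rVert_\calh^2 = \langle v - Pv, v \rangle_\calh = \langle v - Pv, A^* g \rangle_\calh .
\end{equation*}
The adjoint of $A \in \call(\calh, V)$ is characterized, under the Riesz identification $\calh' \cong \calh$, by $\langle A^* g, h \rangle_\calh = g(Ah)$ for all $g \in V'$ and $h \in \calh$. Applying this with $h = v - Pv$ gives
\begin{equation*}
\lVert v - Pv \rVert_\calh^2 = g\bigl(A(v - Pv)\bigr).
\end{equation*}

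The key step is now to apply the hypothesis \eqref{eq:worst_case_V_star} not to $f = v$ but to $f = v - Pv$. Since $P$ is a projection we have $P(v - Pv) = Pv - Pv = 0$, so $A\bigl((v - Pv) - P(v - Pv)\bigr) = A(v - Pv)$, and \eqref{eq:worst_case_V_star} yields $\lVert A(v - Pv) \rVert_V \leq \varepsilon \lVert v - Pv \rVert_\calh$. Combining with the duality bound $\lvert g(A(v-Pv)) \rvert \leq \lVert g \rVert_{V'} \lVert A(v-Pv) \rVert_V$ gives
\begin{equation*}
\lVert v - Pv \rVert_\calh^2 \leq \varepsilon \lVert g \rVert_{V'} \lVert v - Pv \rVert_\calh .
\end{equation*}
Dividing by $\lVert v - Pv \rVert_\calh$ (the case $Pv = v$ being trivial) and taking the infimum over all $g \in V'$ with $A^* g = v$ produces exactly \eqref{eq:superconv_V}.

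I do not foresee any real obstacle; everything reduces to correctly invoking the defining property of the adjoint in the Hilbert-to-Banach setting and, most importantly, applying \eqref{eq:worst_case_V_star} to the residual $v - Pv$ rather than to $v$ itself. The only mild point worth making explicit in the write-up is that $\lVert \cdot \rVert_{A^*(V')}$ in \eqref{eq:A_norm} is the natural quotient norm on $A^*(V') \cong V'/\ker(A^*)$, so the infimum in the last step is legitimate and finite for every $v \in A^*(V')$.
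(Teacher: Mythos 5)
Your proof is correct, but it takes a route different from the paper's own proof of this statement; in fact it coincides almost exactly with the paper's proof of the companion result, Theorem~\ref{th:super_direct}, which the paper states separately under the strictly weaker hypothesis
\begin{equation*}
\norm{V}{A(v - P v)} \leq \varepsilon\norm{\calh}{v-Pv}
\end{equation*}
for the single function $v$. The paper's own proof of Theorem~\ref{th:super_A_star} begins instead from the variational identity
\begin{equation*}
\norm{\calh}{v-P v}=\sup_{\norm{\calh}{f}\leq 1}  \left|\inner{\calh}{v, f - P f}\right|,
\end{equation*}
bounds $|\inner{\calh}{v, f}| = |g(Af)| \leq \norm{V'}{g}\norm{V}{Af}$ for a fixed test function $f$ with $\norm{\calh}{f}\leq 1$, and only then invokes \eqref{eq:worst_case_V_star} uniformly over the unit ball. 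You instead square the error, write $\norm{\calh}{v-Pv}^2 = g(A(v-Pv))$, and apply \eqref{eq:worst_case_V_star} to the single function $f=v-Pv$, using $P(v-Pv)=0$. This is exactly the argument of Theorem~\ref{th:super_direct}, and the paper records in Remark~\ref{rem:condition} that \eqref{eq:worst_case_V_star} applied to $f=v-Pv$ yields the hypothesis of that theorem. The practical difference is this: your shorter argument uses less than the full strength of \eqref{eq:worst_case_V_star}, which makes it more flexible for single-function or adaptive settings (as the paper notes), whereas the sup-over-the-unit-ball formulation the paper uses here is what is later needed to feed into the interpolation-space machinery of Corollary~\ref{cor:intermediate}, where the bound must hold uniformly as the approximant $v_0$ varies inside an infimum. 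Your closing remark that the degenerate case $Pv=v$ is trivial and that $\norm{A^*(V')}{}$ is the quotient norm making the infimum legitimate is also correct.
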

\begin{proof}
Using the fact that $P$ is an orthogonal projection, we have
\begin{align*}
\sup_{\norm{\calh}{f}\leq 1}  \left|\inner{\calh}{v, f - P f}\right|
=\sup_{\norm{\calh}{f}\leq 1}  \left|\inner{\calh}{v-P v, f - P f}\right|
&\leq \sup_{\norm{\calh}{f}\leq 1}  \norm{\calh}{v-P v}\norm{\calh}{f - P f}\\
&\leq \sup_{\norm{\calh}{f}\leq 1}  \norm{\calh}{v-P v}\norm{\calh}{f}\\
&= \norm{\calh}{v-P v},
\end{align*}
where both inequalities are equalities if
\begin{equation*}
f \coloneqq \left(v-P v\right)/ \norm{\calh}{v-P v}.
\end{equation*}
Thus
\begin{align} \label{eq:proj_norm_as_sup}
\norm{\calh}{v-P v}=\sup_{\norm{\calh}{f}\leq 1}  \left|\inner{\calh}{v, f - P f}\right|\;\;\fa v\in\calh.
\end{align}
By definition of $A$, we now have
\begin{align*}
\inner{\calh}{A^* g, f} = g(A f) \;\;\fa f\in \calh, g\in V',
\end{align*}
and in particular for $v= A^* g$ with $g\in V'$ we can write
\begin{align*}
\left|\inner{\calh}{v, f}\right|
= \left|\inner{\calh}{A^* g, f}\right|
= \left|g(A f)\right|
\leq \norm{V'}{g} \norm{V}{A f}\;\;\fa f\in\calh.
\end{align*}
Combining \eqref{eq:proj_norm_as_sup} and the previous inequality we get
\begin{equation*}
\norm{\calh}{v-P v}
= \sup_{\norm{\calh}{f}\leq 1}  \left|\inner{\calh}{v, f - P f}\right|
\leq \norm{V'}{g}\cdot \sup_{\norm{\calh}{f}\leq 1} \norm{V}{A (f - Pf)},
\end{equation*}
and using~\eqref{eq:worst_case_V_star} completes the proof since $g$ such that $v=Ag$ is arbitrary.
\end{proof}

\begin{rem}
The theorem proves superconvergence for functions that are in the range of a certain operator, and we will see in the following that, in some cases of interest, 
this range can be characterized quite explicitly in terms of additional conditions that a function needs to satisfy. 
Although we do not address this issue in this paper, it would be of interest to characterize an optimal operator, in the sense of guaranteeing maximal 
superconvergence rates while requiring minimal assumptions.
\end{rem}

Theorem~\ref{th:super_A_star} can be proven also in the following modified setting.

\begin{theorem}\label{th:super_direct} 
Let $V, A$ be as in Theorem~\ref{th:super_A_star}, but instead of assuming~\eqref{eq:worst_case_V_star}, assume that there is $v \in A^*(V')$ such that for an orthogonal projection $P\colon \calh\to\calh$ there is a constant 
$\varepsilon>0$ such that 
\begin{equation}\label{eq:worst_case_V_direct}
\norm{V}{A(v - P v)} \leq \varepsilon\norm{\calh}{v-Pv}.
\end{equation}
Then \eqref{eq:superconv_V} holds.
\end{theorem}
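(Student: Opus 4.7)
The plan is to mirror the structure of the proof of Theorem~\ref{th:super_A_star}, but to replace its supremum argument~\eqref{eq:proj_norm_as_sup} with the more elementary identity $\lVert f - Pf \rVert_\calh^{2} = \langle f - Pf, f \rangle_\calh$ that holds for any orthogonal projection (already recalled in Section~\ref{sec:background}). The point is that~\eqref{eq:worst_case_V_direct} is only assumed at the specific $v$, so we must avoid ranging over arbitrary $f\in\calh$.

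Concretely, I would fix an arbitrary $g \in V'$ with $v = A^{*} g$ and compute
\begin{equation*}
\norm{\calh}{v - P v}^{2}
= \inner{\calh}{v - P v, v}
= \inner{\calh}{v - P v, A^{*} g}
= g\bigl( A(v - P v) \bigr),
\end{equation*}
where the first equality uses the orthogonality $\inner{\calh}{v-Pv, Pv}=0$ and the last uses the definition of the adjoint $A^{*}$. Then the duality pairing and assumption~\eqref{eq:worst_case_V_direct} give
\begin{equation*}
\norm{\calh}{v - P v}^{2}
\leq \norm{V'}{g}\cdot \norm{V}{A(v - P v)}
\leq \varepsilon\, \norm{V'}{g}\cdot \norm{\calh}{v - P v}.
\end{equation*}
If $\norm{\calh}{v - P v}=0$, then~\eqref{eq:superconv_V} holds trivially; otherwise I divide by $\norm{\calh}{v - P v}$ and take the infimum over all $g \in V'$ with $v = A^{*} g$ to obtain, via the definition~\eqref{eq:A_norm} of $\norm{A^*(V')}{v}$, the desired bound~\eqref{eq:superconv_V}.

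I do not anticipate any real obstacle: the argument is in fact shorter than the proof of Theorem~\ref{th:super_A_star}, because the supremum-over-$f$ step is replaced by a direct evaluation at $f = v$, which is precisely the $f$ that attained equality in the earlier proof. The only subtlety is handling the degenerate case $v = Pv$, which is immediate.
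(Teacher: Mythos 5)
Your proof is correct and follows essentially the same approach as the paper: both compute $\norm{\calh}{v-Pv}^2 = \inner{\calh}{v-Pv, v} = g(A(v-Pv))$ via orthogonality and the adjoint definition, then apply the duality bound and~\eqref{eq:worst_case_V_direct}. Your explicit handling of the degenerate case $v=Pv$ and the infimum over $g$ merely spells out steps the paper leaves implicit.
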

\begin{proof}
Let $v = A^* g$ for $g \in V'$.
Using the definition of $A^*$, and the fact that $P$ is an orthogonal projection, we obtain
\begin{align*}
\norm{\calh}{v-P v}^2
=\left|\inner{\calh}{v-Pv, v-Pv}\right|
=\left|\inner{\calh}{v, v-Pv}\right|
= \left|\inner{\calh}{A^*g, v-Pv}\right| &= \left|g(A(v-Pv))\right| \\
&\leq \norm{V'}{g} \norm{V}{A(v-Pv)},
\end{align*}
and using~\eqref{eq:worst_case_V_direct} completes the proof.
\end{proof}

\begin{rem}
\label{rem:condition}
Observe that the condition~\eqref{eq:worst_case_V_direct} is weaker than condition~\eqref{eq:worst_case_V_star}: it is implied by taking $f\coloneqq 
v-Pv$, so that $f-Pf = v-Pv$ by the orthogonality of $P$, and it needs 
to hold only for the single function $v$, rather than for every $f\in \calh$. However, we still need the formulation of \Cref{th:super_A_star} for the results 
of the next section.

The formulation of \Cref{th:super_direct} may be particularly useful in the case of adaptive algorithms such as greedy 
methods, where suitable projections are constructed for single functions.
In particular, when $\calh$ is the RKHS of a strictly positive definite kernel, and for the choice $V' = L_\infty(\Omega)$, we are in the setting of the 
greedy algorithms considered in~\cite{wenzel2023analysis}, while for $V' = W^{\ell}_{\infty}(\Omega)$ we can cover the setting of~\cite{wenzel2025adaptive}.
\end{rem}

\subsection{Intermediate orders}\label{sec:intermediate_orders}
We have seen in Theorem~\ref{th:super_A_star} that superconvergence occurs in the image $A^*(V')$ of operators $A\in\call(\calh, V)$.
We now extend these superconvergence results to a full range of subspaces that are in a certain way intermediate between $A^*(V')$ and $\calh$: These will provide less 
restrictive conditions for superconvergence, at the price of a smaller improvement in the rates.

To this end we first recall some notions from interpolation theory, for which we refer to Chapter~1 in~\cite{lunardi2018interpolation}.
Consider a Banach space $(\mathcal K, \norm{\mathcal K}{})$ that is a subspace of the Hilbert space $\calh$. For $f\in\calh$ and $t>0$, we define the 
$K$-functional
\begin{equation}\label{eq:K_funct}
K(t, f)\coloneqq \inf_{v\in \mathcal K} \left(\norm{\calh}{f-v} + t \norm{\mathcal K}{v}\right),
\end{equation}
and for $p\in[1, \infty]$, $\theta \in (0,1)$ we define the norms
\begin{align}\label{eq:interp_norm}
\norm{\theta,p}{f} &\coloneqq \norm{L_{p,*}(0,\infty)}{t\mapsto t^{-\theta} K(t, f)},\;\;1\leq p<\infty, \\
\norm{\theta,\infty}{f} &\coloneqq \sup_{t>0} t^{-\theta} K(t, f),\nonumber
\end{align}
where $\norm{L_{p,*}(0,\infty)}{f}^p \coloneqq \int_0^\infty |f(t)|^p/t \, \mathrm{d} t$.
The real $p$-interpolation space between $\calh$ and $\mathcal K$ is  then the space
\begin{equation*}
(\calh, \mathcal K)_{\theta,p}\coloneqq \{f\in\calh: \norm{\theta,p}{f}<\infty\},
\end{equation*}
and it is a Banach space with the norm $\norm{(\calh, \mathcal K)_{\theta,p}}{}\coloneqq\norm{\theta,p}{}$. 
Observe moreover that $(\calh, \mathcal K)_{\theta,p} = (\mathcal K, \calh)_{1-\theta,p}$, and it holds (see Proposition 1.1.3 
in~\cite{lunardi2018interpolation}) that
\begin{equation}\label{eq:interp_space_inclusion}
    (\calh, \mathcal K)_{\theta,p} \subset (\calh, \mathcal K)_{\theta,q},\;\; 1\leq p \leq q\leq \infty,\;\theta\in(0,1).
\end{equation}

The following corollary extends Theorem~\ref{th:super_A_star} for interpolation spaces between $\calh$ and $A^*(V')$.
Observe that the condition $v\in (\calh, A^*(V'))_{\theta,\infty}$ in this corollary is quite weak in view of~\eqref{eq:interp_space_inclusion}, since it is implied by $v\in 
(\calh, A^*(V'))_{\theta,p}$ for any $p\in[1,\infty)$.

\begin{cor}\label{cor:intermediate}
Under the assumptions of Theorem~\ref{th:super_A_star}, let $\theta\in(0,1)$ and let $v\in (\calh, A^*(V'))_{\theta,\infty}$. Then there is a refined 
error bound
\begin{equation}\label{eq:intermediate_conv}
\norm{\calh}{v-P v} \leq \norm{(\calh, A^*(V'))_{\theta,\infty}}{g}\cdot \varepsilon^{\theta}.
\end{equation}
\end{cor}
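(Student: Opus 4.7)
The plan is to combine the superconvergence estimate of \Cref{th:super_A_star} for the ``smooth'' part of $v$ with the trivial bound $\|I-P\|_{\calh\to\calh}\le 1$ for the ``rough'' part, then optimize the split and read off the result from the $K$-functional.

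Concretely, I would fix $v\in(\calh, A^*(V'))_{\theta,\infty}$ and, for an arbitrary $w\in A^*(V')$, decompose
\begin{equation*}
v - Pv = (v-w) - P(v-w) + (w - Pw).
\end{equation*}
Since $P$ is an $\calh$-orthogonal projection, its operator norm satisfies $\|I-P\|_{\calh\to\calh}\le 1$, so the first contribution is bounded by $\|v-w\|_\calh$. For the second contribution, the hypothesis $w\in A^*(V')$ lets me invoke \Cref{th:super_A_star} to conclude $\|w - Pw\|_\calh \le \varepsilon\,\|w\|_{A^*(V')}$. The triangle inequality then gives
\begin{equation*}
\|v - Pv\|_\calh \le \|v-w\|_\calh + \varepsilon\,\|w\|_{A^*(V')}.
\end{equation*}

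Since $w\in A^*(V')$ was arbitrary, taking the infimum over all such $w$ yields, by the definition \eqref{eq:K_funct} of the $K$-functional with $\mathcal K = A^*(V')$ evaluated at $t=\varepsilon$,
\begin{equation*}
\|v - Pv\|_\calh \le K(\varepsilon, v).
\end{equation*}
Finally, the definition \eqref{eq:interp_norm} of $\norm{\theta,\infty}{\cdot}$ gives $K(t,v)\le t^{\theta}\,\norm{(\calh, A^*(V'))_{\theta,\infty}}{v}$ for every $t>0$; specializing to $t=\varepsilon$ produces \eqref{eq:intermediate_conv}.

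There is essentially no hard step here: the only subtlety is recognizing that the natural split $v = (v-w)+w$ plus the orthogonal-projection contraction bound turns the two-sided estimate of \Cref{th:super_A_star} into an infimum that is exactly $K(\varepsilon,v)$, and that evaluating the $\norm{\theta,\infty}{}$-norm at this particular $t$ is what produces the interpolated exponent $\varepsilon^{\theta}$. (In the displayed conclusion the symbol $g$ on the right-hand side should read $v$, as is clear from the proof.)
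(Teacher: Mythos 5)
Your proof is correct and follows the same overall strategy as the paper's (split $v$ against an arbitrary $w\in A^*(V')$, apply \Cref{th:super_A_star} to the smooth part, recognize the $K$-functional, and evaluate at $t=\varepsilon$). The difference lies in the decomposition: the paper writes $v-Pv = (v-w) + (w-Pw) + (Pw - Pv)$ and bounds the first and third terms separately by $\|v-w\|_\calh$ each, which produces a constant $2$ in front of $K(\varepsilon,v)$ that then propagates into the final estimate; you instead write $v - Pv = (I-P)(v-w) + (w - Pw)$ and use $\|I-P\|_{\calh\to\calh}\le 1$, so the first contribution costs only $\|v-w\|_\calh$ and the infimum is exactly $K(\varepsilon,v)$ with no extraneous constant. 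This is cleaner and in fact gives the stated bound \eqref{eq:intermediate_conv} as written, whereas the paper's own proof, taken literally, only yields it with an extra factor of $2$. You also correctly flag that $g$ in the display should read $v$.
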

\begin{proof}
The proof follows some ideas used in Section 4 in~\cite{li2024entropy}.
First, consider $f\in\calh$ and any $v_0\in A^*(V')$. By the triangle inequality and properties of the orthogonal projection we have
\begin{equation*}
\norm{\calh}{f-Pf} 
\leq \norm{\calh}{f-v_0}+\norm{\calh}{v_0-Pv_0} + \norm{\calh}{P v_0- Pf}
\leq 2 \norm{\calh}{f-v_0}+\norm{\calh}{v_0-Pv_0},
\end{equation*}
and applying Theorem~\ref{th:super_A_star} to $v_0$ we get $\norm{\calh}{v_0-Pv_0}\leq \varepsilon \norm{A^*(V')}{v_0}$ with the norm defined in~\eqref{eq:A_norm}.
Thus
\begin{align}\label{eq:split}
\norm{\calh}{f-Pf}
\leq 2 \norm{\calh}{f-v_0}+\varepsilon \norm{A^*(V')}{v_0}
\leq 2 \left(\norm{\calh}{f-v_0}+\varepsilon \norm{A^*(V')}{v_0}\right).
\end{align}
Let $\theta\in(0,1)$ and $v\in (\calh, \mathcal K)_{\theta,\infty}$. Since $v_0\in A^*(V')$ is arbitrary in~\eqref{eq:split}, we have that
\begin{align*}
\norm{\calh}{v - Pv} 
\leq 2 \inf_{v_0\in A^*(V')} \left(\norm{\calh}{v-v_0}+\varepsilon \norm{A^*(V')}{v_0}\right)\leq 2 K(\varepsilon, v), 
\end{align*}
where we used~\eqref{eq:K_funct} in the last step. 
Finally, using~\eqref{eq:interp_norm} gives
\begin{align*}
    K(\varepsilon, v) = \varepsilon^\theta \cdot \varepsilon^{-\theta} K(\varepsilon, v) \leq \varepsilon^\theta \sup_{\varepsilon > 0} \varepsilon^{-\theta} 
K(\varepsilon, v) = \varepsilon^\theta \lVert v \rVert_{(\calh, A^*(V'))_{\theta,\infty}},
\end{align*}
which concludes the proof. 

\end{proof}
\begin{rem}
Observe that for $\theta\to1$ we get $v\in A^*(V')$, and the convergence rate~\eqref{eq:intermediate_conv} is that of Theorem~\ref{th:super_A_star}.
For $\theta \to 0$, on the other hand, we are approaching the case $v\in\calh$, for which~\eqref{eq:intermediate_conv} gives the trivial bound 
$\norm{\calh}{v-Pv}\leq \norm{\calh}{v}$.
\end{rem}

In the following we will be mostly concerned with the case when $V=L_p(\Omega)$ for some $p\in[1, \infty)$. 
In particular, we will deal in Section~\ref{sec:superconv_mercer} with the notable case when $A$ is the embedding operator in $V=L_2(\Omega)$ and thus $A^*=T$ 
(see~\eqref{eq:T_operator}), and in Section~\ref{sec:superconv_in_images} we will consider a general $A$ with $V=L_p(\Omega)$ for $p\in[1, \infty)$. Also in 
this case we will prove that $A^*$ is an integral operator. 
However, we stress that Theorem~\ref{th:super_A_star} and Corollary~\ref{cor:intermediate} are not limited to this setting, as shown in the 
following example.

\begin{example}\label{ex:l1}
We start from Example 2.2 in~\cite{santin2021sampling}, that we reformulate with the notation of this paper. 
Let $\calh(\Omega)$ be the RKHS of a positive definite kernel $k$ on a set $\Omega\subset\R^d$. 
Let $I\subset\N$ and let $V\coloneqq \ell_\infty(I)$, so that $V'=\ell_1(I)$. 
Let $Z\coloneqq\{z_i\}_{i\in I}\subset \Omega$. 
Define the operator $A^*\in\call(V^*,\calh(\Omega))$ by 
\begin{equation*}
A^*(\rho)\coloneqq \sum_{i\in I} \rho_i k(\cdot, z_i).
\end{equation*}
Taking $\rho\in \ell_1(I)$ and $f\in\calh(\Omega)$, we have
\begin{equation}\label{eq:example_linf}
\inner{\calh(\Omega)}{A^*(\rho), f}
= \sum_{i\in I} \rho_i f(z_i)
= \rho(f_Z),
\end{equation}
where $f_Z\coloneqq (f(z_i))_{i\in I}$ is a sequence in $\ell_\infty(I)$, since 
\begin{equation*}
\norm{\ell_\infty(I)}{f_Z} = \sup_{i\in I}|f(z_i)|\leq \norm{L_{\infty}(\Omega)}{f}\leq \norm{\calh(\Omega)}{f}<\infty. 
\end{equation*}
Thus $A\in\call(\calh(\Omega), V)$ is defined by $f\mapsto f_Z$.
If $k$ is a Sobolev kernel and $P=I_X$ for a finite set $X\subset\Omega$ with sufficiently small fill distance, then using again~\eqref{eq:example_linf} and 
Theorem~\ref{th:std_error_bound} we obtain the bound~\eqref{eq:worst_case_V_direct} in the form
\begin{align*}
\norm{\ell_\infty(I)}{A(f-I_X f)} = \sup_{i\in I}|(f-I_Xf)(z_i)|\leq  \norm{L_{\infty}(\Omega)}{f-I_Xf} \leq C h_X^{\tau-d/2} \norm{\calh(\Omega)}{f},
\end{align*}
where the value of $\varepsilon>0$ corresponds to a standard interpolation error bound when $P=I_X$ for a finite set $X\subset\Omega$. 
Thus, 
Theorem~\ref{th:super_A_star} implies that there is superconvergence for any $v\in\calh(\Omega)$ that can be written as a kernel expansion
\begin{equation*}
v = \sum_{i\in I} \rho_i k(\cdot, z_i),
\end{equation*}
with arbitrary centers $z_i\in\Omega$ and summable coefficients $\sum_{i\in I}|\rho_i|< \infty$. More precisely, \eqref{eq:superconv_V} gives
\begin{equation*}
\norm{\calh(\Omega)}{v-Pv} \leq  C h_X^{\tau-d/2} \norm{\ell_1(I)}{\rho}.
\end{equation*}
In this case, fractional superconvergence happens in $(\calh(\Omega), A^*(V'))_{\theta,\infty}$.
We are not yet able to provide a more explicit characterization of this 
space, even if we expect it to be related to a weaker summability condition on the coefficients.
\end{example}

\section{Superconvergence in Mercer-based spaces} \label{sec:superconv_mercer}
From now on we work under \Cref{ass:mercer}.
That is, we consider a positive definite kernel $k$ on a set $\Omega \subset \R^d$ and assume that the integral operator given by $(Tf)(x) = \int_\Omega k(x, y) f(y) \mathrm{d} y$ has positive eigenvalues $\{\lambda_j\}_{j=1}^\infty$ and $L_2(\Omega)$-orthonormal eigenfunctions $\{\varphi_j\}_{j=1}^\infty$.
As noted in \Cref{remark:mercer-assumptions}, the setting could be generalized so that $\Omega$ that is equipped with a measure $\mu$ instead of the Lebesgue measure.

We start by considering the case introduced in Section~\ref{sec:mercer}, 
which is comprised in the analysis of Section~\ref{sec:cont_superconv} by taking 
$V=V' \coloneqq L_2(\Omega)$, 
and $A$ as the embedding of $\calh(\Omega)$ into $L_2(\Omega)$.
Thus $A^*$ is the kernel integral operator $T$, see \eqref{eq:T_operator}.
We recall that this is the main setting recalled in \Cref{subsec:background_superconv} from \cite{schaback1999improved,schaback2018superconvergence}.

We will see that this case can be analyzed in the setting of intermediate orders of Section~\ref{sec:intermediate_orders}, where we can additionally provide an 
explicit characterization of the real interpolation spaces $(\calh(\Omega), T (L_2(\Omega)))_{\theta,2}$. We study these spaces in 
Section~\ref{sec:power_spaces}, 
and derive corollaries of 
the results of Section~\ref{sec:cont_superconv} in Section~\ref{subsec:superconv_kernel}.

\subsection{Power spaces and interpolation}\label{sec:power_spaces}
We start by recalling a key result from \cite{steinwart2012mercer}. 
\begin{prop} \label{prop:theta_spaces}
For each $\theta\in [0,\infty)$ the \emph{power space}
\begin{align*} 
\calh_\theta(\Omega) \coloneqq \left\{f \coloneqq \sum_{j=1}^\infty \langle f, \varphi_j \rangle_{L_2(\Omega)} \varphi_j: \sum_{j=1}^\infty \frac{|\langle f, \varphi_j \rangle_{L_2(\Omega)}|^2}{\lambda_j^\theta} < \infty \right\}
\end{align*}
is well defined, and it is an Hilbert space with norm 
\begin{equation}\label{eq:theta_norm}
\norm{\calh_\theta(\Omega)}{f}^2 = \sum_{j=1}^\infty \frac{|\langle f, \varphi_j \rangle_{L_2(\Omega)}|^2}{\lambda_j^\theta}.
\end{equation}
Moreover, there exists a limiting value $\theta_0\in (0, 1)$ such that $\calh_\theta(\Omega)$ is an RKHS for $\theta >\theta_0$, and not an RKHS for $\theta\in 
[0,\theta_0]$.
\end{prop}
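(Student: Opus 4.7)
My plan is to separate the argument into a routine piece establishing the Hilbert space structure and a much more delicate piece locating the critical value $\theta_0$, which is essentially the substantive content of~\cite{steinwart2012mercer}.

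First, I would verify that $\calh_\theta(\Omega)$ is a Hilbert space via an isometric identification with $\ell_2(\N)$. Given any sequence $(c_j)_{j\in\N}$ with $\sum_j |c_j|^2/\lambda_j^\theta < \infty$, the eigenvalue bound $\lambda_j \leq \lambda_1$ gives $\sum_j |c_j|^2 \leq \lambda_1^\theta \sum_j |c_j|^2/\lambda_j^\theta < \infty$, so the formal series $\sum_j c_j \varphi_j$ converges in $L_2(\Omega)$. The map $\Psi_\theta \colon \calh_\theta(\Omega) \to \ell_2(\N)$ defined by $f \mapsto (\lambda_j^{-\theta/2}\langle f,\varphi_j\rangle_{L_2(\Omega)})_{j\in\N}$ is then a bijective isometry with respect to the norm~\eqref{eq:theta_norm}, which gives $\calh_\theta(\Omega)$ its Hilbert space structure.

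Next I would characterize when $\calh_\theta(\Omega)$ is an RKHS on $\Omega$. Since $\{\lambda_j^{\theta/2}\varphi_j\}_{j\in\N}$ is an orthonormal basis of $\calh_\theta(\Omega)$, a standard criterion (cf.\ Section~2.1 in~\cite{Paulsen2016}) shows that $\calh_\theta(\Omega)$ is an RKHS of functions on $\Omega$ if and only if $\sum_j \lambda_j^\theta |\varphi_j(x)|^2 < \infty$ for each $x \in \Omega$, in which case the reproducing kernel is the power kernel $k^{[\theta]}(x,y) = \sum_j \lambda_j^\theta \varphi_j(x)\varphi_j(y)$. The inequality $\lambda_j^{\theta'} \leq \lambda_1^{\theta'-\theta}\lambda_j^\theta$ for $\theta' \geq \theta$ shows that this pointwise convergence is monotone in $\theta$: convergence at some exponent propagates to all larger exponents. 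Hence the set $S \coloneqq \{\theta \geq 0 : \calh_\theta(\Omega) \text{ is an RKHS}\}$ is an upper half-line, and since $\sum_j \lambda_j \varphi_j(x)^2 = k(x,x) < \infty$ by the Mercer expansion~\eqref{eq:mercer}, we have $[1,\infty) \subseteq S$. Setting $\theta_0 \coloneqq \inf S$ gives $\theta_0 \in [0,1]$.

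The remaining and hardest part is to show the strict localization $\theta_0 \in (0,1)$ together with the non-attainment $\theta_0 \notin S$. The lower bound $\theta_0 > 0$ is straightforward: $\calh_0(\Omega) = L_2(\Omega)$ consists of equivalence classes rather than genuine functions and so is not an RKHS, and quantitatively $\sum_j |\varphi_j(x)|^2$ typically diverges almost everywhere. The main obstacle is the upper bound $\theta_0 < 1$ together with the fact that the infimum is \emph{not} attained, because the relevant convergence condition becomes more stringent as $\theta$ decreases, so neither strictness nor openness of $S$ at its left endpoint follows from general monotonicity. This is the substantive content of the main theorem of~\cite{steinwart2012mercer}, whose proof combines the uniform convergence of Mercer's series with spectral and measure-theoretic analysis of the operator $T$; I would appeal to that paper rather than reproduce the delicate saturation arguments here.
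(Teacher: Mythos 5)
Your proposal is correct in outline and matches the paper's approach: the paper's own proof is a bare citation to~\cite{steinwart2012mercer} (for the Hilbert space structure and norm characterization) and to~\cite{schaback2002approximation} (for the existence of the critical $\theta_0$), and you reproduce the routine parts explicitly (the isometry with $\ell_2(\N)$, the RKHS criterion $\sum_j\lambda_j^\theta|\varphi_j(x)|^2<\infty$, the monotonicity making $S$ an upper half-line containing $[1,\infty)$ via Mercer's series) while appealing to the same literature for the hard part.

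The one place where the argument as written does not close is the claim that ``the lower bound $\theta_0>0$ is straightforward.'' What you actually show is that $0\notin S$, i.e.\ $\calh_0(\Omega)=L_2(\Omega)$ is not an RKHS. Because $S$ need not contain its infimum, $0\notin S$ is perfectly consistent with $\theta_0=\inf S=0$ and $S=(0,\infty)$. Concluding $\theta_0>0$ requires excluding a whole right neighborhood $(0,\varepsilon]$ from $S$, i.e.\ showing that the divergence of $\sum_j\lambda_j^\theta|\varphi_j(x)|^2$ persists for small positive $\theta$. That is of the same nature as the non-attainment at $\theta_0$ that you correctly identify as delicate, and it is part of what the cited references establish (and it genuinely depends on the eigenvalue decay, not only on soft structural facts). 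So the lower bound should be grouped with the cited ``hard'' part rather than presented as elementary. Apart from that mislabeling, your argument is sound and is the same argument as the paper's, made explicit.
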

\begin{proof}
The proof of these facts can be found in \cite{steinwart2012mercer}. We point in particular to the discussion after Proposition 4.2 in 
\cite{steinwart2012mercer} and 
especially to equation (36) for the characterization of the norm. The existence of a limiting $\theta_0$ can be found in \cite{schaback2002approximation}, 
and is discussed in \cite{steinwart2012mercer} as well.
\end{proof}

By the definition of power spaces and of their norms we readily see that 
$\calh_{\theta_1}(\Omega)\hookrightarrow \calh_{\theta_2}(\Omega)$ if and only if
$\theta_1\geq \theta_2$, and from the discussion in Section~\ref{sec:mercer} it holds that
\begin{equation}\label{eq:power_notable_cases}
\calh_0(\Omega) = L_2(\Omega),\quad
\calh_1(\Omega) = \calh(\Omega),\quad
\calh_2(\Omega) = T(L_2(\Omega)).
\end{equation}
We point to \Cref{fig:visualization} for a visualization of the scale of spaces.
\begin{figure}[t]
\newlength\fwidth
\setlength\fwidth{.4\textwidth}
\begin{center}
\begin{tikzpicture}[>=latex, thick]
\draw[->] (0,0) -- (11cm,0) node [right] {$\R$};

\draw (1.5,-4pt) -- (1.5,4pt) node[below=10pt]{0};
\draw (1.5,5pt) node[above, align=center, fill=lightgray, rounded corners, inner sep=2pt]{$L_2(\Omega)$};

\draw (4,-2pt) -- (4,2pt);
\draw (4, 5pt) node[above, align=center, fill=lightgray, rounded corners, inner sep=2pt]{$\mathcal{H}_{\theta}(\Omega)$} node[below=10pt]{$\theta$};

\draw (8,-2pt) -- (8,2pt);
\draw (8, 5pt) node[above, align=center, fill=lightgray, rounded corners, inner sep=2pt]{$TL_1(\Omega)$} node[below=10pt]{$2-\frac{d}{2\tau}$};

\draw (5.5,-2pt) -- (5.5,2pt);
\draw (5.5, 5pt) node[above, align=center, fill=lightgray, rounded corners, inner sep=2pt]{$\calh(\Omega)$} node[below=10pt]{1};
\draw (9.5,-2pt) -- (9.5,2pt);
\draw (9.5, 5pt) node[above, align=center, fill=lightgray, rounded corners, inner sep=2pt]{$TL_2(\Omega)$} node[below=10pt]{2};
\end{tikzpicture}
\end{center}
\caption{Visualization of the scale of power spaces with the special cases $L_2(\Omega)$ (for $\theta = 0$), 
$\calh(\Omega)$ (for $\theta = 1$) and $TL_2(\Omega)$ (for $\theta = 2$).
In the case of Sobolev kernels, $TL_1(\Omega)$ refers to the power space with index $2 - \frac{d}{2\tau}$, see \Cref{th:embedding_power_spaces}.
This paper considers the superconvergence case of intermediate order, i.e.\ $\theta \in (1, 2]$.}
\label{fig:visualization}
\end{figure}
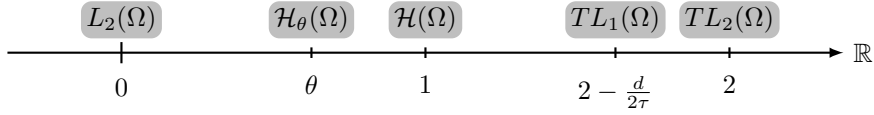
Observe moreover that \eqref{eq:theta_norm} implies that $\norm{\calh_\theta(\Omega)}{\varphi_j}^2 = 1 / \lambda_j^\theta$, 
in accordance with their orthonormality in $L_2$, and with the orthonormality of $\{ \sqrt{\lambda_j} \varphi_j \}_{j=1}^\infty$ in $\calh(\Omega)$ (see 
Section~\ref{sec:mercer}).

Power spaces can be characterized as real interpolation spaces. The proof is an elementary application of interpolation 
theory, that we report for completeness (see also \cite{chandler2015interpolation}).
\begin{lemma}\label{lemma:calh_as_interp}
For $\theta \in (0, 1)\cup(1,2)$ the space $\calh_{\theta}(\Omega)$ is the real $2$-interpolation space between $\calh(\Omega)$ and $L_2(\Omega)$ or $T( 
L_2(\Omega))$, i.e., 
\begin{align*}
\calh_\theta(\Omega) 
= 
\begin{cases}
\left(L_2(\Omega), \calh(\Omega)\right)_{\theta, 2}, & 0 < \theta < 1, \\
\left(\calh(\Omega), T(L_2(\Omega))\right)_{\theta-1, 2}, & 1 < \theta < 2.
\end{cases}
\end{align*}
\end{lemma}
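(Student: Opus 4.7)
The plan is to reduce everything to a computation in weighted $\ell_2$ spaces via the Mercer eigenbasis. Under \Cref{ass:mercer}, for any $f \in L_2(\Omega)$ the map $f \mapsto (c_j)_{j\in\N}$ with $c_j \coloneqq \langle f, \varphi_j\rangle_{L_2(\Omega)}$ gives isometric isomorphisms
\begin{equation*}
L_2(\Omega) \cong \ell_2,\qquad \calh(\Omega) \cong \ell_2(1/\lambda_j),\qquad T(L_2(\Omega)) \cong \ell_2(1/\lambda_j^2),\qquad \calh_\theta(\Omega) \cong \ell_2(1/\lambda_j^\theta),
\end{equation*}
where $\ell_2(w_j)$ denotes the weighted $\ell_2$ space with $\|(c_j)\|^2 = \sum_j w_j c_j^2$. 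The identification for $T(L_2(\Omega)) = \calh_2(\Omega)$ follows from~\eqref{eq:power_notable_cases} and the norm formula~\eqref{eq:theta_norm}. Since real interpolation is a covariant functor of exact type $\theta$ on compatible couples (Chapter~1 in~\cite{lunardi2018interpolation}), it suffices to compute the two interpolation spaces inside these sequence space representations.

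Next I would invoke the classical fact that, for strictly positive weights $w_j^{(0)}, w_j^{(1)}$, the real interpolation space satisfies
\begin{equation*}
\left(\ell_2(w_j^{(0)}), \ell_2(w_j^{(1)})\right)_{\eta, 2} = \ell_2\!\left((w_j^{(0)})^{1-\eta} (w_j^{(1)})^{\eta}\right),
\end{equation*}
with equivalent norms, for any $\eta \in (0,1)$. This can be obtained by diagonalising and computing the $K$-functional $K(t, (c_j))^2$ termwise as $\sum_j c_j^2 \min(w_j^{(0)}, t^2 w_j^{(1)})$ (up to constants), followed by direct evaluation of $\int_0^\infty t^{-2\eta} K(t, (c_j))^2 \, \mathrm{d} t / t$; this is a standard computation and is essentially the content of Theorem~3.1 in~\cite{chandler2015interpolation}.

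Applying this formula to the two cases finishes the proof. For $\theta \in (0,1)$, we interpolate $\ell_2$ and $\ell_2(1/\lambda_j)$ with parameter $\theta$: the resulting weights are $1^{1-\theta}(1/\lambda_j)^{\theta} = 1/\lambda_j^\theta$, so we recover $\calh_\theta(\Omega)$. For $\theta \in (1,2)$, we interpolate $\ell_2(1/\lambda_j)$ and $\ell_2(1/\lambda_j^2)$ with parameter $\theta - 1 \in (0,1)$: the weights become $(1/\lambda_j)^{2-\theta}(1/\lambda_j^2)^{\theta-1} = 1/\lambda_j^{\theta}$, again yielding $\calh_\theta(\Omega)$.

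The main obstacle is the termwise $K$-functional computation in weighted $\ell_2$, but this is a standard calculation that can either be cited from the interpolation-theory literature or verified directly by diagonalisation, since the two weights are simultaneously diagonalised on the standard basis. Once that lemma is in hand, both identifications follow by simply checking the exponent arithmetic, so no further difficulty arises.
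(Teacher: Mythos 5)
Your proof is correct and takes essentially the same route as the paper: both reduce the statement to the Mercer eigenbasis and then invoke a known result on real interpolation between nested Hilbert spaces (the paper cites Theorem~3.4 of \cite{chandler2015interpolation}, you cite the weighted-$\ell_2$ interpolation formula which you attribute to Theorem~3.1 of the same reference). The one genuine difference in bookkeeping is that you identify all four spaces $L_2(\Omega)$, $\calh(\Omega)$, $T(L_2(\Omega))$, $\calh_\theta(\Omega)$ with weighted $\ell_2$ spaces \emph{in a single coordinate system}, namely the $L_2$-normalized Fourier coefficients $c_j = \langle f, \varphi_j\rangle_{L_2(\Omega)}$, and then the claim reduces to the exponent arithmetic $(w^{(0)}_j)^{1-\eta}(w^{(1)}_j)^{\eta}$, which you check correctly in both ranges of $\theta$. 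The paper instead applies the abstract Hilbert-pair theorem twice with different choices of $H_0,H_1$, which forces it to re-derive the eigenfunction normalization each time ($\psi_j=\varphi_j$ in the first case, $\psi_j=\varphi_j/\sqrt{\lambda_j}$ in the second). Your uniform weighted-$\ell_2$ reduction avoids that bookkeeping and is arguably a little cleaner to verify, at the modest cost of having to state (or cite) the termwise $K$-functional computation explicitly. Both proofs are valid.
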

\begin{proof}
Theorem 3.4 in \cite{chandler2015interpolation} proves that if $H_1$ is a Hilbert space that is densely and compactly embedded in a Hilbert space $H_0$, then
the 
operator $S \colon H_1 \rightarrow H_1$ defined by 
\begin{align*}
\inner{H_1}{S f, g} = \inner{H_0}{f, g}\;\;\fa f, g\in H_1
\end{align*}
is compact and self adjoint and it has eigenvalues and eigenfunctions $\mu_j$, $\psi_j$ with $\norm{H_0}{ \psi_j} = 1$, $\{\mu_j\}_{j=1}^\infty$ non increasing and $\mu_j\to 0$. Moreover, for all $\zeta\in (0,1)$ it holds
\begin{align}\label{eq:intermediate_interp_spaces}
\left(H_0, H_1\right)_{\zeta, 2}=\left\{f = \sum_{j=1}^\infty \langle f, \psi_j \rangle_{L_2(\Omega)} \psi_j : \sum_{j=1}^\infty \frac{|\langle f, \psi_j 
\rangle_{L_2(\Omega)}|^2}{\mu_j^\zeta}<\infty \right\},
\end{align}
with squared norm $\norm{\left(H_0, H_1\right)_{\zeta, 2}}{f}^2 = \sum_{j=1}^\infty {|\langle f, \psi_j \rangle_{L_2(\Omega)}|^2}/{\mu_j^\zeta}$.
In our case, we have that $\calh(\Omega)$ is compactly embedded into $L_2(\Omega)$, and the same holds for $T(L_2(\Omega))$ into $\calh(\Omega)$. 
It follows 
that the result can be applied in both cases with $S=T$ (in the second case with the restriction $T\colon \calh(\Omega) \to \calh(\Omega)$), which gives 
$\mu_j=\lambda_j$ in both cases. On the other hand, the normalization $\norm{H_0}{ \psi_j} = 1$ gives $\psi_j=\varphi_j$ in the first case and 
$\psi_j=\varphi_j/\sqrt{\lambda_j}$ in the second one. It follows that the final interpolation spaces on the right of \eqref{eq:intermediate_interp_spaces} are 
exactly $\calh_\theta(\Omega)$ with $\theta\in (0, 1)$ in the first case and $\theta\in (1, 2)$ in the second one.
\end{proof}

\subsection{Fractional superconvergence}\label{subsec:superconv_kernel}
With this characterization of the power spaces we can now obtain a first concrete instance of Corollary~\ref{cor:intermediate}.

\begin{cor}\label{cor:super}
Let $\varepsilon>0$ and let $P\colon \calh(\Omega)\to\calh(\Omega)$ be an orthogonal projection such that
\begin{equation}\label{eq:worst_case_l2}
\norm{L_2(\Omega)}{f-Pf} \leq \varepsilon \norm{\calh(\Omega)}{f}\;\;\fa f\in\calh(\Omega).
\end{equation}
Then for all $\theta\in[0,1]$ and $v\in \calh_{1+\theta}(\Omega)$ it holds
\begin{equation*}
\norm{\calh(\Omega)}{v-P v} 
\leq \varepsilon^{\theta} \norm{\calh_{1+\theta}(\Omega)}{v}.
\end{equation*}
\end{cor}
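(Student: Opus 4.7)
The plan is to deduce the corollary from the framework of \Cref{sec:cont_superconv}: take $V = V' = L_2(\Omega)$ and $A$ to be the embedding $\calh(\Omega) \hookrightarrow L_2(\Omega)$, so that $A^* = T$ and $A^*(V') = T(L_2(\Omega)) = \calh_2(\Omega)$ by \eqref{eq:power_notable_cases}, under which identification \eqref{eq:worst_case_l2} coincides with \eqref{eq:worst_case_V_star}. The two endpoints are then immediate. For $\theta = 0$ the claim is the trivial contractivity of an orthogonal projection, while for $\theta = 1$ \Cref{th:super_A_star} gives $\|v - Pv\|_{\calh(\Omega)} \leq \varepsilon\|v\|_{T(L_2(\Omega))}$; here one only needs to observe that for $v \in \calh_2(\Omega)$ the unique $g \in L_2(\Omega)$ with $Tg = v$ has Mercer coefficients $\langle v, \varphi_j\rangle_{L_2}/\lambda_j$, so that $\|g\|_{L_2(\Omega)}^2 = \sum_j |\langle v,\varphi_j\rangle_{L_2}|^2/\lambda_j^2 = \|v\|_{\calh_2(\Omega)}^2$.

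For intermediate $\theta \in (0, 1)$ I would interpolate directly in the Mercer basis rather than invoke \Cref{cor:intermediate}, since the latter carries an implicit multiplicative constant that would prevent the stated bound with prefactor one. Setting $Q \coloneqq I - P$ and writing $v = \sum_j v_j\sqrt{\lambda_j}\,\varphi_j$ in the RKHS orthonormal basis, the projection identity $\|Qv\|_{\calh(\Omega)}^2 = \langle Qv, v\rangle_{\calh(\Omega)}$, together with the splitting $(Qv)_j v_j = \bigl((Qv)_j\lambda_j^{\theta/2}\bigr)\bigl(v_j \lambda_j^{-\theta/2}\bigr)$ and Cauchy--Schwarz, yields
\begin{equation*}
\|Qv\|_{\calh(\Omega)}^2 \leq \|Qv\|_{\calh_{1-\theta}(\Omega)}\,\|v\|_{\calh_{1+\theta}(\Omega)}
\end{equation*}
after recognising that $\|f\|_{\calh_\alpha(\Omega)}^2 = \sum_j f_j^2 \lambda_j^{1-\alpha}$ for any $f = \sum_j f_j\sqrt{\lambda_j}\varphi_j$ in $\calh(\Omega)$.

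Next, applying H\"older's inequality with exponents $1/\theta$ and $1/(1-\theta)$ to the decomposition $\sum_j (Qv)_j^2 \lambda_j^\theta = \sum_j ((Qv)_j^2 \lambda_j)^\theta\,((Qv)_j^2)^{1-\theta}$ gives the log-convexity estimate
\begin{equation*}
\|Qv\|_{\calh_{1-\theta}(\Omega)} \leq \|Qv\|_{L_2(\Omega)}^{\theta}\,\|Qv\|_{\calh(\Omega)}^{1-\theta}.
\end{equation*}
The crucial step is then to use $P(Qv) = P(v - Pv) = 0$, which allows one to reapply the hypothesis \eqref{eq:worst_case_l2} with $f \coloneqq Qv$ to obtain the self-improvement $\|Qv\|_{L_2(\Omega)} \leq \varepsilon \|Qv\|_{\calh(\Omega)}$. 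Substituting yields $\|Qv\|_{\calh_{1-\theta}(\Omega)} \leq \varepsilon^\theta \|Qv\|_{\calh(\Omega)}$, and feeding this back into the Cauchy--Schwarz bound gives $\|Qv\|_{\calh(\Omega)}^2 \leq \varepsilon^\theta \|Qv\|_{\calh(\Omega)}\,\|v\|_{\calh_{1+\theta}(\Omega)}$, from which the claim follows by dividing through.

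The main subtlety I expect is this self-improvement step: using the hypothesis directly to bound $\|Qv\|_{L_2(\Omega)}$ by $\varepsilon \|v\|_{\calh(\Omega)}$ only recovers the weaker and somewhat unnatural estimate $\|Qv\|_{\calh(\Omega)} \leq \varepsilon^{\theta/(1+\theta)}\|v\|_{\calh(\Omega)}^{\theta/(1+\theta)} \|v\|_{\calh_{1+\theta}(\Omega)}^{1/(1+\theta)}$. The observation that $Q$ is itself an orthogonal projection onto the orthogonal complement of $\mathrm{range}(P)$, so that $P$ annihilates $Qv$, is precisely what makes the bootstrapping legitimate and produces the full power $\varepsilon^\theta$ with no extra multiplicative constant.
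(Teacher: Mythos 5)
Your proof is correct, and it takes a genuinely different route from the paper. The paper handles the intermediate case $\theta\in(0,1)$ by identifying $\calh_{1+\theta}(\Omega)$ with the real interpolation space $(\calh(\Omega),T(L_2(\Omega)))_{\theta,2}$ (\Cref{lemma:calh_as_interp}), continuously embedding this into $(\calh(\Omega),T(L_2(\Omega)))_{\theta,\infty}$, and then invoking \Cref{cor:intermediate}. You instead stay entirely inside the Mercer picture: you expand $Qv\coloneqq v-Pv$ and $v$ in the orthonormal basis $\{\sqrt{\lambda_j}\varphi_j\}$ of $\calh(\Omega)$, apply Cauchy--Schwarz to split the pairing $\langle Qv,v\rangle_{\calh(\Omega)}=\lVert Qv\rVert_{\calh(\Omega)}^2$ into $\lVert Qv\rVert_{\calh_{1-\theta}(\Omega)}\lVert v\rVert_{\calh_{1+\theta}(\Omega)}$, then apply H\"older to show log-convexity of the power-space norms in $\theta$, and finally bootstrap the hypothesis at $f\coloneqq Qv$ using $PQv=0$.

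Your concern about constants is well founded and is the substantive gain of your approach. Following the proof of \Cref{cor:intermediate} literally produces an extra factor $2$ in front of $K(\varepsilon,v)$, and the inclusion $(\calh,T(L_2))_{\theta,2}\hookrightarrow(\calh,T(L_2))_{\theta,\infty}$ contributes a further (non-unit) embedding constant, so the route through interpolation spaces cannot yield the stated bound with prefactor exactly one, only up to a constant. Your direct Mercer-basis argument is tight and constant-free, and it also handles the two endpoints $\theta\in\{0,1\}$ without a separate case distinction (though your separate treatment of them is harmless). The one thing your argument relies on that the paper's abstract route does not is \Cref{ass:mercer}, i.e.\ the availability of the eigenbasis; but this assumption is already in force throughout \Cref{sec:superconv_mercer}, so nothing is lost. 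In short: a correct alternative proof, more elementary and sharper on the constant, at the cost of being tied to the Mercer framework rather than the general interpolation-space machinery of \Cref{cor:intermediate}.
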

\begin{proof}
For $\theta=0$ the result is trivial since $\calh_1(\Omega)=\calh(\Omega)$ (see~\eqref{eq:power_notable_cases}).
If $\theta\in (0,1]$, we set $V=V'=L_2(\Omega)$ and let $A\colon \calh(\Omega)\to L_2(\Omega)$ be the embedding operator, so that 
$A^*=T \colon L_2(\Omega)\to\calh(\Omega)$. 
With this setup, the bound~\eqref{eq:worst_case_l2} is an instance of the bound~\eqref{eq:worst_case_V_star}.
For $\theta=1$ we thus obtain the result by simply applying Theorem~\ref{th:super_A_star}, since in this case $\calh_{1+\theta}(\Omega)=\calh_{2}(\Omega) = 
T(L_2(\Omega))$ by~\eqref{eq:power_notable_cases}.

For $\theta\in(0,1)$ we use the fact that $\calh_{1+\theta}(\Omega)=\left(\calh(\Omega), T(L_2(\Omega))\right)_{\theta, 2}\subset \left(\calh(\Omega), 
T(L_2(\Omega))\right)_{\theta, \infty}$ (Lemma~\ref{lemma:calh_as_interp} and equation~\eqref{eq:interp_space_inclusion}). We can thus apply 
Corollary~\ref{cor:intermediate} to $v\in \calh_{1+\theta}(\Omega)$ and get the result. 
\end{proof}

This corollary has an immediate application in the case of Sobolev kernels (see Section~\ref{sec:sobolev_kernels}).

\begin{cor}
\label{cor:application_to_sobolev}
  %Consider the setting and constants $C, h_0 > 0$ of \Cref{th:std_error_bound}.
  Consider the setting of \Cref{th:std_error_bound} and let $C_{m,q}, h_{0,m,q} > 0$ be the constants in this theorem with their dependency on $m$ and $q$ made explicit.
  Additional, let $\theta\in[0, 1]$.
  Then for all $v\in \calh_{1+\theta}(\Omega)$ and $X\subset\Omega$ with $h_X\leq h_{0,0,2}$ it holds
\begin{equation*}
\norm{\calh(\Omega)}{v - I_X v} 
\leq C_{0,2}^\theta h_X^{\theta\cdot\tau} \norm{\calh_{1+\theta}(\Omega)}{v} .
\end{equation*}
In particular, for $h_X\leq \min\{ h_{0,m,q}, h_{0,0,2} \}$ it holds
\begin{equation*}
\seminorm{W_q^m(\Omega)}{v - I_X v} 
\leq C_{m,q} C_{0,2}^\theta h_X^{(1+\theta)\tau - m - d(1/2-1/q)_+} \norm{\calh_{1+\theta}(\Omega)}{v}.
\end{equation*}
\end{cor}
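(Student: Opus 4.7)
The plan is to apply \Cref{cor:super} with $P = I_X$ to deduce the $\calh(\Omega)$-norm bound, then convert it to a $W_q^m$ seminorm estimate via a second application of \Cref{th:std_error_bound}.

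First, I would verify hypothesis \eqref{eq:worst_case_l2} of \Cref{cor:super} for the kernel interpolation projection $I_X$. Because $I_X$ is the $\calh(\Omega)$-orthogonal projection onto $V(X)$, we have $\norm{\calh(\Omega)}{f - I_X f} \leq \norm{\calh(\Omega)}{f}$ for every $f \in \calh(\Omega)$. Combining this with the $m=0$, $q=2$ case of \Cref{th:std_error_bound}, which is valid provided $h_X \leq h_{0,0,2}$, gives
\begin{equation*}
\norm{L_2(\Omega)}{f - I_X f} \leq C_{0,2}\, h_X^{\tau}\, \norm{\calh(\Omega)}{f - I_X f} \leq C_{0,2}\, h_X^{\tau}\, \norm{\calh(\Omega)}{f},
\end{equation*}
so \eqref{eq:worst_case_l2} holds with $\varepsilon = C_{0,2}\, h_X^{\tau}$.

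Since $\calh_{1+\theta}(\Omega) \subset \calh(\Omega)$ for $\theta \geq 0$, I can then apply \Cref{cor:super} to any $v \in \calh_{1+\theta}(\Omega)$ with this choice of $\varepsilon$, obtaining
\begin{equation*}
\norm{\calh(\Omega)}{v - I_X v} \leq \varepsilon^{\theta}\, \norm{\calh_{1+\theta}(\Omega)}{v} = C_{0,2}^{\theta}\, h_X^{\theta\tau}\, \norm{\calh_{1+\theta}(\Omega)}{v},
\end{equation*}
which is the first claimed estimate. For the second one, I would invoke \Cref{th:std_error_bound} a second time at the given pair $(m,q)$, valid when $h_X \leq h_{0,m,q}$, to bound $\seminorm{W_q^m(\Omega)}{v - I_X v}$ by $C_{m,q}\, h_X^{\tau - m - d(1/2-1/q)_+}\, \norm{\calh(\Omega)}{v - I_X v}$, and substitute the $\calh(\Omega)$-norm estimate just derived. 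The exponents $\tau - m - d(1/2-1/q)_+$ and $\theta\tau$ combine additively into $(1+\theta)\tau - m - d(1/2-1/q)_+$, and the constants compose to $C_{m,q} C_{0,2}^{\theta}$; the two fill-distance thresholds must both be satisfied, yielding $h_X \leq \min\{h_{0,m,q}, h_{0,0,2}\}$.

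There is no real obstacle here: the only conceptual point is recognizing that the worst-case $L_2$-vs-$\calh$ bound required as input to \Cref{cor:super} is supplied precisely by the $m=0$, $q=2$ case of \Cref{th:std_error_bound}, which allows the superconvergence factor $h_X^{\theta\tau}$ to compound with the standard rate for the target $W_q^m$ seminorm. Everything else is bookkeeping of constants and fill-distance thresholds.
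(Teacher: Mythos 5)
Your proposal is correct and matches the paper's proof essentially line for line: both obtain $\varepsilon = C_{0,2}\,h_X^\tau$ from the $m=0$, $q=2$ case of \Cref{th:std_error_bound} together with $\norm{\calh(\Omega)}{f-I_Xf}\leq\norm{\calh(\Omega)}{f}$, feed this into \Cref{cor:super}, and then reapply \Cref{th:std_error_bound} at $(m,q)$ for the second estimate. No differences worth noting.
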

\begin{proof}
For a Sobolev kernel and $P=I_X$, we use Theorem~\ref{th:std_error_bound} with $q=2$, $m=0$ to obtain the bound~\eqref{eq:worst_case_l2} with $\varepsilon=C_{0,2} 
h_X^{\tau}$, since $\norm{\calh(\Omega)}{f-I_X f}\leq \norm{\calh(\Omega)}{f}$ for all $f\in\calh(\Omega)$.
We can thus apply Corollary~\ref{cor:super} and insert back the resulting bound in the right-hand side of~\eqref{eq:std_error_bound}, to obtain
\begin{align*}
\seminorm{W_q^m(\Omega)}{v - I_X v} 
&\leq C_{m,q} h_X^{\tau - m - d(1/2-1/q)_+} \norm{\calh(\Omega)}{v-I_X v}\\
&\leq C_{m,q} h_X^{\tau - m - d(1/2-1/q)_+} \left(C_{0,2} h_X^{\tau}\right)^{\theta} \norm{\calh_{1+\theta}(\Omega)}{v}, 
\end{align*}
which is the statement.
\end{proof}

When $\theta \in (0, 1)$, this result provides rates that are between those in \Cref{th:std_error_bound} and \Cref{cor:standard-sobolev-superconvergence}, extending the superconvergence of~\cite{schaback1999improved} (see also Theorem 11.23 in~\cite{wendland2005scattered}) to the entire scale of power 
spaces. 
In the case of Sobolev kernels, as discussed in~\cite{schaback2018superconvergence}, to obtain superconvergence one should not only for additional smoothness but 
also for the satisfaction of some boundary conditions. We will discuss this point in Section~\ref{sec:boundary}. However, there are cases where no boundary 
conditions should be expected and we get an explicit characterization of $\calh_{1+\theta}$ in terms of smoothness only.
In \Cref{ex:zonal}, we consider the sphere which has no boundary.
In \Cref{ex:periodic_kernel}, the kernel instead satisfies strong boundary conditions.

\begin{example}\label{ex:zonal}
This example follows Section 2.2 in~\cite{wendland2020solving}.
We consider the unit sphere $\Omega\coloneqq \mathbb S^{d-1}$ and the corresponding Laplace--Beltrami operator, which has eigenvalues $\mu_j$ and eigenfunctions $\psi_j$, $j\in\N$, giving an orthonormal basis of $L_2(\mathbb S^{d-1})$.
For any $\sigma\geq 0$ and $f\coloneqq\sum_{j=1}^\infty \langle f, \psi_j \rangle_{L_2(\Omega)} \psi_j\in L_2(\mathbb S^{d-1})$ it holds $f\in W_2^\sigma(\mathbb S^{d-1})$ if and only if
\begin{equation}\label{eq:sobolev_sphere}
\sum_{j=1}^\infty |\langle f, \psi_j \rangle_{L_2(\Omega)}|^2 (1 + \mu_j)^\sigma <\infty.
\end{equation}
Moreover, for $\sigma>(d-1)/2$, any kernel on $\mathbb S^{d-1}$ with $\varphi_j=\psi_j$ and
\begin{equation}\label{eq:lambda_vs_mu}
c_1 (1 + \mu_j)^{-\sigma} \leq \lambda_j\leq c_2 (1 + \mu_j)^{-\sigma},
\end{equation}
for some $c_1, c_2>0$, has an RKHS which is norm equivalent to $W_2^\sigma(\mathbb S^{d-1})$.
Any zonal kernel $k:\mathbb S^{d-1}\times\mathbb S^{d-1}\to\R$, i.e., $k(x, y)=\Psi(x^T y)$ for a suitable $\Psi:[-1,1]\to\R$, has the same Mercer 
eigenfunctions of the Laplace--Beltrami operator, and zonal kernels arise in particular as restriction to the sphere of Radial Basis Functions (RBFs) kernels 
defined on $\R^d$.
In particular, the restriction to the sphere of an RBF Sobolev kernel with $\calh(\R^d)\asymp W_2^{\sigma+1/2}(\R^d)$ is a zonal kernel with RKHS 
$W_2^\sigma(\mathbb S^{d-1})$ (Corollary 2.5 in~\cite{wendland2020solving}).
Thus~\eqref{eq:lambda_vs_mu} is satisfied for some $c_1, c_2>0$.
We stress that this construction is not limited to the sphere, but extends also to other manifolds (see Proposition 2.4 in~\cite{wendland2020solving} and the 
following discussion).

Combining these facts, in our setting we have that for $\theta\in [0,1]$ and $f\in L_2(\mathbb S^{d-1})$ it holds
\begin{equation*}
c_1^{1+\theta} \sum_{j=1}^\infty \frac{|\langle f, \psi_j \rangle_{L_2(\Omega)}|^2}{\lambda_j^{1+\theta}}
\leq \sum_{j=1}^\infty |\langle f, \psi_j \rangle_{L_2(\Omega)}|^2 (1 + \mu_j)^{(1+\theta)\sigma}
\leq c_2^{1+\theta} \sum_{j=1}^\infty \frac{|\langle f, \psi_j \rangle_{L_2(\Omega)}|^2}{\lambda_j^{1+\theta}},
\end{equation*}
and using Proposition~\ref{prop:theta_spaces} and the condition~\eqref{eq:sobolev_sphere} this means that $\calh_{1+\theta}(\mathbb 
S^{d-1})=W_2^{(1+\theta)\sigma}(\mathbb S^{d-1})$,
i.e., the power spaces can be solely characterized in terms of smoothness.
\end{example}

\begin{example}
\label{ex:periodic_kernel}
Let $\Omega = [0, 1]$.
The trigonometric polynomials $\varphi_j(x) = e^{2\pi \mathrm{i} j x}$ for $j \in \Z$ are an orthonormal basis of $L_2(\Omega)$ and can be used to define $W_{2,\textup{per}}^\alpha(\Omega)$, the periodic Sobolev space of order $\alpha > 0$ on $[0, 1]$, as
\begin{equation*}
  W_{2,\textup{per}}^\alpha(\Omega) \coloneqq \left\{f \coloneqq \sum_{j \in \Z} \langle f, \varphi_j \rangle_{L_2(\Omega)} \varphi_j \: : \: \sum_{j \in \Z} \lvert j \rvert^{2\alpha} |\langle f, \varphi_j \rangle_{L_2(\Omega)}|^2 < \infty \right\} .
\end{equation*}
Alternatively, one can view $W_{2,\textup{per}}^\alpha(\Omega)$ as $W_2^\alpha(\mathbb{S}^1)$.
When $\alpha$ is an integer this space consists of those functions in the Sobolev space of order $r$ whose derivatives up to order $r-1$ are 
periodic on $[0, 1]$ (i.e., take the same value at $0$ and $1$).
The periodic Sobolev space is an RKHS when $\alpha > 1/2$ and its kernel is
\begin{equation*}
        k_\alpha(x, y) = \sum_{j \in \Z} \lvert j \rvert^{-2 \alpha} e^{2\pi\mathrm{i} j x} \overline{e^{2\pi \mathrm{i} j y}} = 1 + 2 \sum_{j=1}^\infty j^{-2 
\alpha} 
\cos(2\pi j(x-y)) ,
\end{equation*}
where $\overline{z}$ denotes the complex conjugate of $z \in \C$ and we use the convention that $0^{-2\alpha} = 1$.
For an integer order $r \in \N$, the Bernoulli polynomial $\mathrm{B}_{2r}$ of degree $2r$ can be used to write the kernel as 
\begin{equation} \label{eq:periodic-kernel}
  k_r(x, y) = 1 + (-1)^{r+1} (2\pi)^{2r} \frac{\mathrm{B}_{2r}( \lvert x - y \rvert )}{(2r)!} .
\end{equation}
See \cite[Appendix~A.1]{NovakWozniakowski2008} for these results.
Because the functions $\varphi_j(x) = e^{2\pi \mathrm{i} jx}$ for $j \in \Z$ form an orthonormal basis of $L_2(\Omega)$, they are the eigenfunctions of the integral operator $T$ in~\eqref{eq:T_operator} with corresponding eigenvalues $\lambda_j = \lvert j \rvert^{-2\alpha}$.
For any $\theta \in [0, \infty)$ the space $\calh_\theta(k_\alpha, \Omega)$ from Proposition~\ref{prop:theta_spaces} is now
\begin{equation*} 
  \calh_\theta(k_\alpha, \Omega) = \left\{f \coloneqq \sum_{j \in \Z} \langle f, \varphi_j \rangle_{L_2(\Omega)} \varphi_j \: : \: \sum_{j \in \Z} \lvert j \rvert^{2\alpha\theta} |\langle f, \varphi_j \rangle_{L_2(\Omega)}|^2 < \infty \right\}
  = \calh(k_{\alpha \theta}, \Omega) = W_{2,\textup{per}}^{\alpha\theta}(\Omega).
\end{equation*}
\end{example}

\section{Superconvergence in the image of general kernel integral operators}\label{sec:superconv_in_images}
In thi section we need the injectivity of $T$, which is guaranteed since we are working under \Cref{ass:mercer}. 
For more general conditions that ensure this injectivity, we refer to~\cite{sriperumbudur2011univ}

\Cref{sec:superconv_mercer} applied the general result of \Cref{th:super_A_star} to the scale of Mercer-based spaces by choosing $V = L_2(\Omega)$ and $A$ as 
the embedding into $V$.
In this section, we apply the general result of \Cref{th:super_A_star} by choosing $V = L_p(\Omega)$, $1 < p < \infty$ and using any $A \in \mathcal{L}(\calh, 
V)$,
i.e., $A$ is not necessarily an embedding.
In this setting we will observe that the corresponding operator $A^*$ is naturally given as a kernel integral operator, which is the content of 
\Cref{subsec:general_kernel_integral}.
Subsequently, a relation between the images of these kernel integral operators and the Mercer based spaces from \Cref{sec:superconv_mercer} is established in 
\Cref{subsec:characterization_image}.

\subsection{General kernel integral operators}
\label{subsec:general_kernel_integral}

We begin with a result that generalizes the identity~\eqref{eq:from_wendland_book}.

\begin{prop}
\label{prop:A_as_integral_operator}
Assume $A \in \mathcal{L}(\calh(\Omega), L_q(\Omega))$ for some $q \in [1, \infty]$
and let $p \in [1, \infty]$ be the Hölder conjugate of $q$.
Then the adjoint operator $A^* \colon L_p(\Omega) \rightarrow \calh(\Omega)$ is given by the integral operator
\begin{align}
\label{eq:A_as_integral_operator}
(A^*w)(x) \coloneqq \int_\Omega A_y(k(y, x)) w(y) \mathrm{d}y \quad \text{ for all } \quad w \in L_p(\Omega),
\end{align}
i.e., it holds
\begin{align}
\label{eq:dual_pairing_integral}
\langle A f, w \rangle_{L_2(\Omega)} = \langle f, A^* w \rangle_{\calh(\Omega)} \quad \text{ for all } \quad f \in \calh (\Omega), \: w \in L_p(\Omega) .
\end{align}
\end{prop}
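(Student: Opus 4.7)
My plan is to define $A^* w$ abstractly via the Riesz representation theorem, and then to evaluate it pointwise using the reproducing property to arrive at the integral formula.

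First I would fix $w \in L_p(\Omega)$ and consider the functional $L_w \colon \calh(\Omega) \to \R$ given by $L_w(f) \coloneqq \int_\Omega (Af)(y) w(y) \, \mathrm{d}y$. Since $Af \in L_q(\Omega)$ and $w \in L_p(\Omega)$ with $1/p + 1/q = 1$, Hölder's inequality combined with continuity of $A$ yields
\begin{equation*}
|L_w(f)| \leq \|Af\|_{L_q(\Omega)} \|w\|_{L_p(\Omega)} \leq \|A\|_{\call(\calh(\Omega), L_q(\Omega))} \|w\|_{L_p(\Omega)} \|f\|_{\calh(\Omega)},
\end{equation*}
so $L_w \in \calh(\Omega)'$. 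By the Riesz representation theorem there exists a unique $A^* w \in \calh(\Omega)$ such that $L_w(f) = \langle f, A^* w\rangle_{\calh(\Omega)}$ for every $f \in \calh(\Omega)$. This already gives \eqref{eq:dual_pairing_integral} by construction, and also establishes that $w \mapsto A^* w$ is the (unique) bounded adjoint $L_p(\Omega) \to \calh(\Omega)$.

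Next I would evaluate $A^* w$ pointwise to recover the claimed integral formula. For any $x \in \Omega$, the reproducing property \eqref{eq:reproducing-property} gives $(A^* w)(x) = \langle A^* w, k(\cdot, x)\rangle_{\calh(\Omega)}$. Since $k(\cdot, x) \in \calh(\Omega)$ we may use it as the test function in the Riesz identity above, obtaining
\begin{equation*}
(A^* w)(x) = L_w(k(\cdot, x)) = \int_\Omega \bigl(A\, k(\cdot, x)\bigr)(y) \, w(y) \, \mathrm{d}y = \int_\Omega A_y(k(y, x)) \, w(y) \, \mathrm{d}y,
\end{equation*}
where the last equality is just the paper's notational convention that $A_y(k(y, x))$ denotes the value at $y$ of $A$ applied to the function $k(\cdot, x) \in \calh(\Omega)$. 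This matches \eqref{eq:A_as_integral_operator}.

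The main obstacle, such as it is, is purely notational: one must be careful that $A$ is acting on the slot $y$ of $k(y, x)$ (so the argument is the function $k(\cdot, x)$), and that the integrability of the product $A_y(k(y, x)) \, w(y)$ comes from Hölder's inequality with the pair $(q, p)$. Everything else is a direct application of continuity of $A$, Hölder's inequality, and the reproducing property; no Mercer expansion or additional structure is required, and in particular the result does not rely on \Cref{ass:mercer}.
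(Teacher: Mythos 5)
Your proof is correct, but it takes a genuinely different route from the paper's. The paper treats the integral formula~\eqref{eq:A_as_integral_operator} as the \emph{definition} of $A^*$ and then verifies the duality identity~\eqref{eq:dual_pairing_integral} by a direct computation on $f\in\Sp\{k(\cdot,x):x\in\Omega\}$ (where one merely substitutes finite linear combinations of kernel sections, uses the reproducing property of $k(\cdot, x_j)$, and exchanges sums with the integral), followed by a density-plus-continuity argument to extend to all $f\in\calh(\Omega)$. You instead define $A^*w$ abstractly as the Riesz representer of the bounded functional $f\mapsto\int_\Omega (Af)(y)w(y)\,\mathrm{d}y$, which gives~\eqref{eq:dual_pairing_integral} by construction, and then recover the pointwise integral formula~\eqref{eq:A_as_integral_operator} by testing against $k(\cdot,x)$ via the reproducing property. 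Your approach has the advantage of making it immediate that $A^*w$ lies in $\calh(\Omega)$ and that $A^*$ is bounded (a fact the paper's proof implicitly takes for granted when it writes $(A^*w)(x_j)=\langle A^*w,k(\cdot,x_j)\rangle_{\calh(\Omega)}$); the paper's approach is perhaps more elementary in that it avoids invoking Riesz and works directly with the defining integral. Both arguments are sound and use only Hölder's inequality, continuity of $A$, and the reproducing property, as you correctly observe.
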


We emphasize that we use $\langle A f, w \rangle_{L_2(\Omega)}$ here as a notation referring to the duality pairing $\int_{\Omega} (A f)(x) w(x) \mathrm{d}x$:
In fact, $A f \in L_q(\Omega)$ is not necessarily included in $L_2(\Omega)$, 
however the integral is nevertheless well defined due to 
$|\langle A f, w \rangle_{L_2(\Omega)}| \leq \Vert A f \Vert_{L_q(\Omega)} \cdot \Vert w \Vert_{L_p(\Omega)}$.

\begin{proof}
We start by proving \eqref{eq:dual_pairing_integral} for $f \in \Sp \{ k(\cdot, x) : x \in \Omega\}$.
For $f = \sum_{j=1}^n \alpha_j k(\cdot, x_j)$ we have
\begin{align*}
\langle A f, w \rangle_{L_2(\Omega)} &= \sum_{j=1}^n \alpha_j \langle A k(\cdot, x_j), w \rangle_{L_2(\Omega)} 
= \sum_{j=1}^n \alpha_j \int_{\Omega} A_y (k(y, x_j)) w(y) \mathrm{d}y \\
&= \sum_{j=1}^n \alpha_j (A^* w)(x_j) = \sum_{j=1}^n \alpha_j \langle A^* w, k(\cdot, x_j) \rangle_{\calh(\Omega)} \\
&= \langle f, A^* w \rangle_{\calh(\Omega)}.
\end{align*}
Since $\Sp \{ k(\cdot, x) : x \in \Omega\}$ is dense in $\calh(\Omega)$ by definition and $A \in \mathcal{L}(\calh(\Omega), L_q(\Omega))$, the statement 
follows.
\end{proof}

Note that \eqref{eq:dual_pairing_integral} generalizes \eqref{eq:from_wendland_book} in two directions:
First, it extends the relation from $L_2(\Omega)$ to $L_p(\Omega)$ for $p \in [1, \infty]$ and second, the operator $A$ can be more general than just an 
embedding.
In fact, one could even further generalize \Cref{prop:A_as_integral_operator} by considering an operator $A \in \mathcal{L}(\calh(\Omega), W_p^\tau(\Omega))$ and thus obtaining $A^*$ via the $W_2^\tau(\Omega)$ inner product instead of the $L_2$ inner product,
i.e.,
\begin{align}
\label{eq:A_as_integral_operator_Sobolev}
(A^*w)(x) = \langle Ak(\cdot, x), w \rangle_{W_2^\tau(\Omega)}.
\end{align}

We exemplify the previous \Cref{prop:A_as_integral_operator} with two special cases.

\begin{example}
Let $q  \in [1, \infty]$ and let $A \in \mathcal{L}(\calh(\Omega), L_q(\Omega))$ be the embedding $\calh(\Omega) \hookrightarrow L_q(\Omega)$.
In this case, the operator $A^*$ is the operator $T$ from \eqref{eq:T_operator} and relation \eqref{eq:dual_pairing_integral} is a generalization of \eqref{eq:from_wendland_book} from $L_2(\Omega)$ to $L_p(\Omega)$:
\begin{align}
\label{eq:generalization_Lp}
\langle f, w \rangle_{L_2(\Omega)} = \langle f, Tw \rangle_{\calh(\Omega)} \quad \text{ for all } \quad f \in \calh(\Omega), \: w \in L_p(\Omega).
\end{align}
\end{example}

\begin{example}
Let $q  \in [1, \infty]$ and let $\calh(\Omega)$ be smooth enough such that $A \coloneqq -\Delta \in \mathcal{L}(\calh(\Omega), L_q(\Omega))$.
In this case, the operator $A^*$ is the kernel integral operator of \eqref{eq:A_as_integral_operator} given as
\begin{align*}
(A^*w)(x) = \int_\Omega -\Delta_y k(y, x) w(y) \mathrm{d}y.
\end{align*}
The same works for any other differential operator $A$ as long as $A \in \mathcal{L}(\calh(\Omega), L_q(\Omega))$.
\end{example}

In some cases the range of the adjoint operator $A^*$ can be connected to the range of the operator $T$ in~\eqref{eq:T_operator}.

\begin{example}
Consider the setting of \Cref{ex:periodic_kernel}.
Let $A f = f^{(\beta)}$ be a differentiation operator of order $\beta \in \N_0$ such that all functions in $\calh(k_\alpha, \Omega)$ are $\beta$ times 
differentiable.
For even $\beta = 2 \gamma$ we get 
\begin{equation*}
  A_y (k_\alpha(y, x)) 
  = \sum_{j \in \Z} \lvert j \rvert^{-2 \alpha} (-2\pi j)^\beta e^{2\pi\mathrm{i} j x} \overline{e^{2\pi \mathrm{i} j y}} 
  = (2\pi)^{2\gamma} \sum_{j \in \Z} \lvert j \rvert^{-2(\alpha-\gamma)} e^{2\pi\mathrm{i} j x} \overline{e^{2\pi \mathrm{i} j y}} = (2 \pi)^{2\gamma} k_{\alpha-\gamma}(x, y),
\end{equation*}
so that $A^* = A^*_\alpha$ for $k_\alpha$ equals the operator $T = T_{\alpha-\gamma}$ in~\eqref{eq:T_operator} for the kernel $k_{\alpha-\gamma}$.
We ignore the constant $(2\pi)^{2\gamma}$ as it only scales the RKHS norm.
From the general result $\calh_2(\Omega) = T (L_2(\Omega))$ in Section~\ref{sec:superconv_mercer} and the characterizations of $W_{2,\textup{per}}^\alpha(\Omega)$ and $\calh_\theta(k_\alpha, \Omega)$ it now follows that 
\begin{equation*}
  A^*_\alpha (L_2(\Omega)) = T_{\alpha-\gamma} (L_2(\Omega)) = \calh(k_{\alpha-\gamma}, \Omega) = W_{2,\textup{per}}^{\alpha-\gamma}(\Omega).
\end{equation*}
\end{example}

In this framework,
the general result of \Cref{th:super_A_star} reads as follows.

\begin{cor}
\label{cor:integral_op}
Let $\calh(\Omega)$ be an RKHS with kernel $k$ and choose $V = L_q(\Omega)$ for $1 < q < \infty$.
Let $A \colon \calh \to V$ be a linear and continuous operator and $A^*$ the kernel integral operator given in \eqref{eq:A_as_integral_operator}.

Assume that for an orthogonal projection $P\colon\calh\to\calh$ there exist a constant $\varepsilon>0$ such that for all $f\in \calh$ there 
is an error bound
\begin{equation}\label{eq:worst_case_V_star2}
\norm{V}{A(f - P f)} \leq \varepsilon \norm{\calh}{f}.
\end{equation}
Then for $v\in A^*(V')$ there is a refined error bound
\begin{equation}\label{eq:superconv_V2}
\norm{\calh}{v-P v} \leq \norm{A^*(V')}{v}\cdot \varepsilon,
\end{equation}
where $\norm{A^*(V')}{v}$ is defined as in~\eqref{eq:A_norm}.
\end{cor}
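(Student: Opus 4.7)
The plan is to recognize that \Cref{cor:integral_op} is essentially a direct specialization of \Cref{th:super_A_star}, with the abstract Banach space $V$ concretized as $L_q(\Omega)$ for $1<q<\infty$ and the abstract operator $A^*\in\call(V',\calh)$ concretized as the kernel integral operator described in \Cref{prop:A_as_integral_operator}. So the whole argument should reduce to checking that the hypotheses of \Cref{th:super_A_star} are met and that the notation translates correctly.

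First, I would identify the dual space $V'$. Since $1<q<\infty$, the standard duality for Lebesgue spaces gives $V'=L_q(\Omega)'\cong L_p(\Omega)$, where $p$ is the Hölder conjugate of $q$ (i.e.\ $1/p+1/q=1$), with duality pairing $\langle \cdot,\cdot\rangle_{L_2(\Omega)}$ interpreted as the integral pairing $\int_\Omega (\cdot)(x)(\cdot)(x)\,\mathrm{d}x$. This is exactly the setting of \Cref{prop:A_as_integral_operator}, which then tells us that the abstract adjoint operator $A^*\in\call(V',\calh)$ coincides with the concrete integral operator defined by~\eqref{eq:A_as_integral_operator} and satisfies~\eqref{eq:dual_pairing_integral}.

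Next, I would observe that the hypothesis~\eqref{eq:worst_case_V_star2} of \Cref{cor:integral_op} is literally the hypothesis~\eqref{eq:worst_case_V_star} of \Cref{th:super_A_star} under the identification $V=L_q(\Omega)$. Therefore \Cref{th:super_A_star} applies and yields, for every $v\in A^*(V')$, the bound
\begin{equation*}
\norm{\calh}{v-Pv}\leq \norm{A^*(V')}{v}\cdot\varepsilon,
\end{equation*}
where $\norm{A^*(V')}{v}=\inf\{\norm{L_p(\Omega)}{w}:v=A^*w\}$ by~\eqref{eq:A_norm}. This is precisely~\eqref{eq:superconv_V2}.

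Since everything reduces to a straightforward identification of the abstract ingredients of \Cref{th:super_A_star} with their concrete $L_p$–$L_q$ counterparts, I do not anticipate any substantive obstacle. The only point that is worth stating explicitly in the final write-up is the restriction $1<q<\infty$, which is precisely what ensures that $L_q(\Omega)'$ is canonically isometrically isomorphic to $L_p(\Omega)$ so that \Cref{prop:A_as_integral_operator} applies to give the explicit integral form of $A^*$; the boundary cases $q\in\{1,\infty\}$ would require extra care and are excluded from the statement.
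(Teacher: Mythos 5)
Your proposal matches the paper's proof exactly: the paper's argument is precisely ``a combination of Proposition~\ref{prop:A_as_integral_operator} and Theorem~\ref{th:super_A_star},'' which is the identification $V=L_q(\Omega)$, $V'\cong L_p(\Omega)$, and $A^*$ the integral operator that you spell out. Your additional remark on why $1<q<\infty$ is needed for the canonical duality is a correct and harmless clarification of the same route.
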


\begin{proof}
This is a combination of \Cref{prop:A_as_integral_operator} and \Cref{th:super_A_star}.
\end{proof}

In order to show that the assumption \eqref{eq:worst_case_V_star2} is easily satisfied in this framework,
we present an application for Sobolev kernels.

\begin{cor} \label{cor:TL_superconvergence-Sobolev}
  Consider the setting of \Cref{th:std_error_bound} and let $C_{m,q}, h_{0,m,q} > 0$ be the constants in this theorem with their dependency on $m$ and $q$ made explicit.
  Let $p, q \in [1, \infty]$ be Hölder conjugates.
  Let $A \colon \calh(\Omega)\to L_q(\Omega)$ with $A f = \mathrm{D}^\beta f = \partial_1^{\beta_1} \cdots \partial_d^{\beta_d} f$ be a differentiation operator for $\beta \in \N_0^d$ such that $m = \lvert \beta \rvert < \tau - d/2$. 
  Assume that $v = A^* g \in A^* (L_p(\Omega))$ with $A^*$ given in \eqref{eq:A_as_integral_operator}.
  Then for all discrete sets $X\subset\Omega$ with $h_X\leq h_{0,m,q}$ it holds
\begin{align} \label{eq:TL_superconvergence-Sobolev-1}
  \lVert v - I_X v \rVert_{\calh(\Omega)} \leq C_{m,q} h_X^{\tau - m - d(1/p -1/2)_+} \lVert g \rVert_{L_p(\Omega)}.
\end{align}
In particular for $m=0$, i.e.~$A_y(k(\cdot, x)) = k(x, y)$, and $h_X \leq h_{0,0}$ we have 
\begin{align*}
  \lVert v - I_X v \rVert_{\calh(\Omega)} \leq C_{0,q} h_X^{\tau - d(1/p -1/2)_+} \lVert g \rVert_{L_p(\Omega)} .
\end{align*}
Moreover, for $t \in \N_0$ such that $t < \tau - d/2$ and $h_X \leq \min\{h_{0,m, q}, h_{0,t,q}\}$ we have
\begin{align} \label{eq:TL_superconvergence-Sobolev-3}
    \seminorm{W_q^t(\Omega)}{v - I_X v} \leq C_{m,q} C_{t,q} h_X^{2\tau - t - m - 2d(1/2 -1/q)_+} \norm{L_p(\Omega)}{g} .
\end{align}

\end{cor}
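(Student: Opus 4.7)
The plan is to invoke \Cref{cor:integral_op} directly with $V = L_q(\Omega)$ (so that $V' = L_p(\Omega)$, using Hölder conjugacy) and with $A = \mathrm{D}^\beta$. To do this, the main task is to verify the worst-case hypothesis \eqref{eq:worst_case_V_star2} for the interpolation projection $P = I_X$, at which point the three claimed bounds will follow from standard manipulations using \Cref{th:std_error_bound}.

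First I would check \eqref{eq:worst_case_V_star2}. For $f \in \calh(\Omega)$, since $|\beta| = m \in \N_0$, I can control the single partial derivative by the full Sobolev seminorm:
\begin{equation*}
\norm{L_q(\Omega)}{A(f - I_X f)} = \norm{L_q(\Omega)}{\mathrm{D}^\beta(f - I_X f)} \leq \seminorm{W_q^m(\Omega)}{f - I_X f}.
\end{equation*}
Then \Cref{th:std_error_bound} applied to $f - I_X f$ with indices $(m, q)$, together with the orthogonal projection inequality $\norm{\calh(\Omega)}{f - I_X f} \leq \norm{\calh(\Omega)}{f}$, yields
\begin{equation*}
\norm{L_q(\Omega)}{A(f - I_X f)} \leq C_{m,q}\, h_X^{\tau - m - d(1/2 - 1/q)_+} \norm{\calh(\Omega)}{f},
\end{equation*}
valid whenever $h_X \leq h_{0,m,q}$. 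Thus \eqref{eq:worst_case_V_star2} holds with $\varepsilon = C_{m,q}\, h_X^{\tau - m - d(1/2 - 1/q)_+}$.

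Next I would apply \Cref{cor:integral_op} to $v = A^* g \in A^*(L_p(\Omega))$. By the definition~\eqref{eq:A_norm} we have $\norm{A^*(L_p(\Omega))}{v} \leq \norm{L_p(\Omega)}{g}$, so \eqref{eq:superconv_V2} becomes
\begin{equation*}
\norm{\calh(\Omega)}{v - I_X v} \leq C_{m,q}\, h_X^{\tau - m - d(1/2 - 1/q)_+} \norm{L_p(\Omega)}{g}.
\end{equation*}
Using $1/p + 1/q = 1$ we rewrite $(1/2 - 1/q)_+ = (1/p - 1/2)_+$, producing \eqref{eq:TL_superconvergence-Sobolev-1}. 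The second bound of the statement is just the specialization $m = 0$, $\beta = 0$ (in which case indeed $A_y(k(\cdot, x)) = k(x,y)$), and requires no extra work.

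Finally, for \eqref{eq:TL_superconvergence-Sobolev-3} I would not apply the abstract superconvergence machinery a second time; instead I would apply the standard error bound \Cref{th:std_error_bound} once more, now with indices $(t, q)$ and to the element $v$ itself, which requires $h_X \leq h_{0,t,q}$. Substituting the $\calh(\Omega)$ bound already obtained gives
\begin{equation*}
\seminorm{W_q^t(\Omega)}{v - I_X v} \leq C_{t,q}\, h_X^{\tau - t - d(1/2 - 1/q)_+} \norm{\calh(\Omega)}{v - I_X v} \leq C_{m,q} C_{t,q}\, h_X^{2\tau - t - m - 2d(1/2 - 1/q)_+} \norm{L_p(\Omega)}{g},
\end{equation*}
which is exactly the third claim, provided $h_X \leq \min\{h_{0,m,q}, h_{0,t,q}\}$. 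There is no substantive obstacle here: the proof is a clean combination of \Cref{cor:integral_op} with the native-space error bound of \Cref{th:std_error_bound}; the only point requiring care is the bookkeeping between $p$ and $q$ in the exponent, which reduces to Hölder conjugacy.
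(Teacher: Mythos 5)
Your proposal is correct and follows essentially the same route as the paper's proof: verify the worst-case hypothesis \eqref{eq:worst_case_V_star2} via the chain $\lVert \mathrm{D}^\beta(f-I_Xf)\rVert_{L_q} \le \seminorm{W_q^m}{f-I_Xf}$, \Cref{th:std_error_bound}, and the orthogonal-projection inequality; then invoke \Cref{cor:integral_op} and use Hölder conjugacy to rewrite the exponent; and obtain \eqref{eq:TL_superconvergence-Sobolev-3} by applying \Cref{th:std_error_bound} a second time with $m=t$ and plugging in \eqref{eq:TL_superconvergence-Sobolev-1}. Your write-up just spells out a couple of points the paper leaves implicit (e.g.\ the bound $\norm{A^*(L_p(\Omega))}{v}\le\norm{L_p(\Omega)}{g}$), but there is no substantive difference in approach.
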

\begin{proof}
From \Cref{th:std_error_bound} and the fact that the interpolation operator $I_X$ is an orthogonal projection it follows that
\begin{align*}
    \lVert \mathrm{D}^\beta (f - I_X f) \rVert_{L_q(\Omega)} \leq \seminorm{W_q^m(\Omega)}{f - I_X f} &\leq C_{m,q} h_X^{\tau - m - d(1/2 - 1/q)_+} \lVert f - I_X f \rVert_{\calh(\Omega)} \\
    &\leq C_{m,q} h_X^{\tau - m - d(1/2 - 1/q)_+} \lVert f \rVert_{\calh(\Omega)}
\end{align*}
for all $f \in \calh(\Omega)$ when $h_X \leq h_{0,m,q}$.
Since $q$ is the Hölder conjugate of $p$, $1/2 - 1/q = 1/p - 1/2$.
Since $I_X$ is an orthogonal projcetion, the inequality~\eqref{eq:TL_superconvergence-Sobolev-1} then follows from \Cref{cor:integral_op}. Inequality~\eqref{eq:TL_superconvergence-Sobolev-3} follows from \Cref{th:std_error_bound} with $m = t$ and~\eqref{eq:TL_superconvergence-Sobolev-1}.
\end{proof}

Note that within \Cref{cor:TL_superconvergence-Sobolev},
instead of $A \colon \calh(\Omega) \rightarrow L_q(\Omega), f \mapsto D^\beta f$ one could also use $\Vert D^\beta f \Vert_{L_q(\Omega)} \leq | f |_{W_q^{|\beta|}(\Omega)}$ 
and consider $A$ as the embedding $\calh(\Omega) \hookrightarrow W_q^{|\beta|}(\Omega)$ and thus $A^*$ given via \eqref{eq:A_as_integral_operator_Sobolev}. 
This yields essentially the same result.

\subsection{Characterization of images}
\label{subsec:characterization_image}
In the case of Sobolev kernels of smoothness $\tau > d/2$ and $A \colon \calh \to L_p(\Omega)$ being the embedding operator,
the adjoint operator $A^*$ is given as the integral operator $T$ from \eqref{eq:T_operator}.
In this case, 
the spaces $T (L_p(\Omega)) \supset T (L_2(\Omega))$ for $1 \leq p < 2$ describe dense subspaces of Mercer based power spaces.
This is formalized in the following theorem.

\begin{theorem}
\label{th:embedding_power_spaces}
Assume $\Omega \subset \R^d$ bounded, $k$ a Sobolev kernel of smoothness $\tau > d/2$ satisfying
$\sup_{x \in \Omega} k(x, x) < \infty$ and let $T$ be the integral operator from \eqref{eq:T_operator}, which is injective.
Let $T(L_p(\Omega)) = \{ Tv : v \in L_p(\Omega)\}$ be equipped with the norm $\Vert Tv \Vert_{T(L_p(\Omega))} \coloneqq \Vert v \Vert_{L_p(\Omega)}$.
Then it holds for $1 \leq p \leq 2$ and any $\theta < 2 - \frac{2-p}{p} \cdot \frac{d}{2\tau}$:
\begin{enumerate}[label=(\roman*)]
\item $T(L_p(\Omega))$ is a Banach space with continuous point evaluation.
\item $T(L_p(\Omega)) \hookrightarrow \mathcal{H}_\theta(\Omega)$, i.e.\ $T(L_p(\Omega))$ is continuously embedded into $\mathcal{H}_\theta(\Omega)$.
\item $T(L_p(\Omega)) \hookrightarrow \mathcal{H}_\theta(\Omega)$ is dense but not surjective.
\item $\overline{T(L_p(\Omega))}^{\Vert \cdot \Vert_{\Ht(\Omega)}} = \mathcal{H}_\theta(\Omega)$.
\end{enumerate}
\end{theorem}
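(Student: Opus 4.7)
I would prove the four claims in order, with the bulk of the argument in part (ii). For part (i), the injectivity of $T$ makes $T \colon L_p(\Omega) \to T(L_p(\Omega))$ an isometric bijection by the very definition of the target norm, so $T(L_p(\Omega))$ inherits its Banach structure from $L_p(\Omega)$. Continuity of point evaluation follows from H\"older's inequality applied to $(Tv)(x) = \int_\Omega k(x,y) v(y)\,\mathrm{d}y$ combined with the Cauchy--Schwarz bound $|k(x,y)| \leq \sqrt{k(x,x)k(y,y)} \leq \sup_x k(x,x)$ and boundedness of $\Omega$, yielding $|Tv(x)| \leq \|k(x,\cdot)\|_{L_{p'}(\Omega)}\|v\|_{L_p(\Omega)} \leq C \|v\|_{T(L_p(\Omega))}$ uniformly in $x$.

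For part (ii), expanding in the Mercer basis with $v_j \coloneqq \langle v,\varphi_j\rangle_{L_2(\Omega)}$ gives
\begin{equation*}
    \|Tv\|_{\Ht(\Omega)}^2 = \sum_{j=1}^\infty \lambda_j^{2-\theta} v_j^2 = \|S_\theta v\|_{L_2(\Omega)}^2,
\end{equation*}
where $S_\theta \coloneqq T^{1-\theta/2}$ is the integral operator with kernel $K_{1-\theta/2}(x,y) = \sum_j \lambda_j^{1-\theta/2}\varphi_j(x)\varphi_j(y)$. I would then establish two endpoint mapping estimates for $S_\theta$ and invoke Stein's complex interpolation theorem. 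The $L_2(\Omega) \to L_2(\Omega)$ bound, valid for $\theta \leq 2$, is immediate from Parseval with operator norm $\lambda_1^{1-\theta/2}$. The $L_1(\Omega) \to L_2(\Omega)$ bound, valid for $\theta < 2 - d/(2\tau)$, follows from Minkowski's integral inequality
\begin{equation*}
    \|S_\theta v\|_{L_2(\Omega)} \leq \sup_{y \in \Omega}\|K_{1-\theta/2}(\cdot,y)\|_{L_2(\Omega)} \|v\|_{L_1(\Omega)} = \sup_{y \in \Omega} K_{2-\theta}(y,y)^{1/2} \|v\|_{L_1(\Omega)},
\end{equation*}
where the supremum is finite because $2-\theta > d/(2\tau)$ places $\calh_{2-\theta}(\Omega)$ above the RKHS threshold $\theta_0$ of \Cref{prop:theta_spaces}, so $K_{2-\theta}$ is a reproducing kernel with bounded diagonal on $\bar\Omega$ by a Mercer-type argument for this auxiliary kernel. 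Stein interpolation of the analytic family $\{S_{\theta(z)}\}_z$, with $\theta(z)$ linearly interpolating between these two endpoints, then yields $S_\theta \colon L_p(\Omega) \to L_2(\Omega)$ boundedness exactly at the stated threshold $\theta < 2 - \frac{2-p}{p}\cdot\frac{d}{2\tau}$, after identifying $1/p(s) = 1-s/2$ and $1-s = (2-p)/p$ in the interpolation parameter.

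For part (iii), density follows because $L_2(\Omega) \subset L_p(\Omega)$ on the bounded domain for $p \leq 2$, so $\calh_2(\Omega) = T(L_2(\Omega)) \subset T(L_p(\Omega))$, while finite Mercer expansions $\sum_{j=1}^N a_j\varphi_j$ lie in $\calh_2(\Omega)$ and are dense in $\Ht(\Omega)$ via the orthonormal basis $\{\lambda_j^{\theta/2}\varphi_j\}_{j}$. Non-surjectivity I would obtain by contradiction using the open mapping theorem: equality $T(L_p(\Omega)) = \Ht(\Omega)$ would force the norms $\|\cdot\|_{T(L_p(\Omega))}$ and $\|\cdot\|_{\Ht(\Omega)}$ to be equivalent, making $L_p(\Omega)$ Banach-isomorphic to the Hilbert space $\Ht(\Omega)$; this is ruled out for $p \neq 2$ by mismatched Rademacher type of $L_p$-spaces, and for $p = 2$ by evaluating the putative equivalence at the eigenfunctions $\varphi_k$, which would force $\lambda_k^{2-\theta} \asymp 1$ in contradiction with $\lambda_k \to 0$ and $\theta < 2$. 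Part (iv) is then an immediate consequence of the density in part (iii).

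The main difficulty I anticipate is in the $L_1(\Omega) \to L_2(\Omega)$ endpoint of part (ii): while pointwise finiteness of the diagonal $K_{2-\theta}(y,y)$ follows from $\calh_{2-\theta}(\Omega)$ being an RKHS, the uniform bound on $\bar\Omega$ required to apply Minkowski demands a continuity argument for the auxiliary kernel $K_{2-\theta}$ via Mercer's theorem, which implicitly invokes further regularity of $k$ and of $\Omega$. The subsequent complex interpolation step must then be set up with some care so that the $p$-dependent threshold it produces matches the stated formula exactly.
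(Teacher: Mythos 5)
Your proposal is essentially correct and follows the same two-endpoint structure as the paper's proof (bound $T$ from $L_2$ and from $L_1$, then interpolate), but several of the steps take genuinely different routes that are worth comparing.

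In part (ii) you recast the problem as a bound on $S_\theta = T^{1-\theta/2} \colon L_p(\Omega) \to L_2(\Omega)$ and invoke Stein complex interpolation for the analytic family $\{S_{\theta(z)}\}_z$, which forces you to vary $\theta$ together with $p$. The paper instead keeps $T$ fixed, proves $T \colon L_2(\Omega) \to \calh_2(\Omega)$ and $T \colon L_1(\Omega) \to \calh_{\theta_0}(\Omega)$ for $\theta_0 < 2-\tfrac{d}{2\tau}$, and then applies real interpolation of a single operator, using that $(L_1,L_2)_{\sigma,\,\cdot}$ recovers $L_{p_\sigma}$ and that the power-space scale $\calh_\theta(\Omega)$ is stable under real interpolation (\Cref{lemma:calh_as_interp}). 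Both give exactly the threshold $\theta < 2 - \tfrac{2-p}{p}\cdot\tfrac{d}{2\tau}$; the paper's route avoids the technical baggage of setting up an admissible analytic family. Your $L_1 \to L_2$ endpoint estimate via Minkowski's inequality and the diagonal of $K_{2-\theta}$ is the same quantity the paper controls, but where you flag uncertainty about the uniform bound on $K_{2-\theta}(y,y)$, the paper closes this cleanly: since $\calh_{2-\theta}(\Omega) \asymp H^{(2-\theta)\tau}(\Omega) \hookrightarrow C(\Omega)$ whenever $(2-\theta)\tau > d/2$, \cite[Theorem 5.3]{steinwart2012mercer} yields $\sup_{x\in\Omega}\sum_j \lambda_j^{2-\theta}\varphi_j(x)^2 < \infty$ directly; no extra regularity of $k$ or $\Omega$ beyond the stated hypotheses is needed. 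For part (iii), your density argument (finite Mercer sums lie in $\calh_2 = T(L_2) \subset T(L_p)$ and span $\Ht$) is shorter and more elementary than the paper's orthogonal-complement argument (take $f \perp_{\Ht} T(L_p)$, test with $v=f$, conclude $f=0$). For non-surjectivity, you use the bounded inverse theorem to get norm equivalence and then rule out Hilbertianity of $L_p$ by Rademacher type; the paper invokes \cite[Lemma 23]{scholpple2025spaces} to produce the reverse embedding and then uses the isometric isomorphism $T(L_p) \cong L_p$ directly. Your explicit separate treatment of $p=2$ via eigenfunction evaluation plugs a small gap: the paper's isometric-isomorphism argument only contradicts $p\neq 2$, and for $p=2$ non-surjectivity is implicit from $\calh_2 \subsetneq \calh_\theta$ for $\theta < 2$. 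Part (iv) follows identically in both.
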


\begin{proof}
Since $T$ is injective, the $\lVert \cdot \rVert_{TL_p(\Omega)}$ norms are well defined.
\begin{enumerate}[label=(\roman*)]
\item Due to the definition of the norm $\lVert \cdot \rVert_{TL_p(\Omega)}$, the completeness of $L_p(\Omega)$ carries over to $T(L_p(\Omega))$ for any $1 \leq p 
\leq 2$. 
For the continuous point evaluation, we consider $v \in L_p(\Omega)$ and $f = Tv \in T(L_p(\Omega))$ and use \eqref{eq:generalization_Lp} to estimate
\begin{align*}
|f(x)| &= |\langle f, k(\cdot, x) \rangle_{\ns}| = |\langle Tv, k(\cdot, x) \rangle_{\ns}| = |\langle v, k(\cdot, x) \rangle_{L_2(\Omega)}| \\
&\leq \Vert v \Vert_{L_p(\Omega)} \cdot \Vert k(\cdot, x) \Vert_{L_q(\Omega)} 
= \Vert Tv \Vert_{TL_p(\Omega)} \cdot \Vert k(\cdot, x) \Vert_{L_q(\Omega)} \\
&= \Vert f \Vert_{TL_p(\Omega)} \cdot \Vert k(\cdot, x) \Vert_{L_q(\Omega)}
\end{align*}
\item Let $Tv = f$ for some $v \in L^1(\Omega)$. 
Then we note that it holds by using \eqref{eq:dual_pairing_integral} twice:
\begin{align*}
\langle	f, \varphi_j \rangle_{L_2(\Omega)} &= \langle	 f, T\varphi_j \rangle_{\calh(\Omega)} = \lambda_j \langle	 f, \varphi_j \rangle_{\calh(\Omega)} = 
\lambda_j \langle	 Tv, \varphi_j \rangle_{\calh(\Omega)} \\
&= \lambda_j \langle	 v, \varphi_j \rangle_{L_2(\Omega)}.
\end{align*}
(This actually shows that we can also directly leverage $T$ being self-adjoint even though $\langle \cdot, \cdot \rangle_{L_2(\Omega)}$ is more of a notation rather than a real inner product.)

We consider the two cases $p=2$ and $p=1$:
\begin{itemize}
\item For $v \in L_2(\Omega)$ we compute:
\begin{align*}
\sum_{j=1}^\infty \frac{|\langle f, \varphi_j \rangle_{L_2(\Omega)}|^2}{\lambda_j^\theta}
&= \sum_{j=1}^\infty \lambda_j^{2-\theta} \cdot |\langle v, \varphi_j \rangle_{L_2(\Omega)}|^2 \\
&\leq \lambda_1^{2-\theta} \cdot \sum_{j=1}^\infty |\langle v, \varphi_j \rangle_{L_2(\Omega)}|^2 \\
&= \lambda_1^{2-\theta} \cdot \Vert v \Vert_{L_2(\Omega)}^2 \\
&= \lambda_1^{2-\theta} \cdot \Vert Tv \Vert_{TL_2(\Omega)}^2.
\end{align*}
This shows that $f = Tv \in \calh_\theta(\Omega)$ for any $\theta \leq 2$,
and thus in particular that $f \in \calh_2(\Omega)$.
\item For $v \in L_1(\Omega)$ we compute: 
\begin{align*}
\sum_{j=1}^\infty \frac{|\langle f, \varphi_j \rangle_{L_2(\Omega)}|^2}{\lambda_j^\theta}
&= \sum_{j=1}^\infty \lambda_j^{2-\theta} \cdot |\langle v, \varphi_j \rangle_{L_2(\Omega)}|^2 \\
&\leq \sum_{j=1}^\infty \lambda_j^{2-\theta} \cdot \left( \int_{\Omega} |v(x)|^{1/2} \cdot |v(x)|^{1/2} \cdot |\varphi_j(x)| ~ \mathrm{d}x \right)^2 \\
&\leq \Vert v \Vert_{L^1(\Omega)} \cdot \sum_{j=1}^\infty \lambda_j^{2-\theta} \int_{\Omega} |v(x)| \varphi_j(x)^2 ~ \mathrm{d}x \\
&\leq \Vert v \Vert_{L^1(\Omega)} \cdot \int_{\Omega} |v(x)| \cdot \sum_{j=1}^\infty \lambda_j^{2-\theta} \varphi_j(x)^2 ~ \mathrm{d}x \\
&\leq \sup_{x \in \Omega} \sum_{j=1}^\infty \lambda_j^{2-\theta} \varphi_j(x)^2  \cdot \Vert v \Vert_{L^1(\Omega)}^2 \\
&\leq \sup_{x \in \Omega} \sum_{j=1}^\infty \lambda_j^{2-\theta} \varphi_j(x)^2  \cdot \Vert Tv \Vert_{TL^1(\Omega)}^2.
\end{align*}
As $\calh(\Omega) \asymp H^\tau(\Omega)$ and $\calh_{2-\theta}(\Omega) \asymp H^{(2-\theta) \tau}(\Omega)$,
we have $\calh_{2-\theta}(\Omega) \asymp H^{(2-\theta) \tau}(\Omega) \hookrightarrow \mathcal{C}(\Omega)$ for $(2-\theta) \tau > d/2$.
Thus, by \cite[Theorem 5.3]{steinwart2012mercer} there is a constant $\kappa \geq 0$ 
such that $\sup_{x \in \Omega} \sum_{j=1}^\infty \lambda_j^{2-\theta} \varphi_j(x)^2 \leq \kappa^2$ almost everywhere.
Hence $f = Tv \in \calh_\theta(\Omega)$ for any $(2-\theta) \tau > d/2 \iff \theta < 2 - \frac{d}{2\tau}$.
\end{itemize}
This shows that $T \colon L_2(\Omega) \rightarrow \calh_2(\Omega)$ as well as $T \colon L_1(\Omega) \rightarrow \calh_\theta(\Omega)$ for $\theta < 2 - 
\frac{d}{2\tau}$ are bounded operators.
Using $\sigma \in [0, 1]$, the interpolation spaces between $L_1(\Omega)$ and $L_2(\Omega)$ are given as $L_{p_\sigma}(\Omega)$ with $\frac{1}{p_\sigma} = 
\frac{1-\sigma}{1} + \frac{\sigma}{2} = 1 - \frac{\sigma}{2} \iff \sigma = 2 - \frac{2}{p}$, 
while the interpolation spaces between $\calh_{2}(\Omega)$ and $\calh_{2-\frac{d}{2\tau}}(\Omega)$ are given as $\calh_{2 - \frac{d}{2\tau} + \sigma 
\frac{d}{2\tau}}(\Omega)$.
Thus interpolation theory yields that $T \colon L_p(\Omega) \rightarrow \calh_\theta(\Omega)$ is bounded for $\theta < 2 - \frac{2-p}{p} \cdot 
\frac{d}{2\tau}$. 
\item To prove the denseness of the embedding, we consider any $0 \leq \theta < 2 - \frac{2-p}{p} \cdot \frac{d}{2\tau}$ and an arbitrary $f \in \Ht(\Omega) 
\subseteq L_2(\Omega)$ such that $f \perp_{\Ht(\Omega)} T(L_p(\Omega))$, which is equivalent to $\langle f, Tv \rangle_{\Ht(\Omega)} = 0$ for all $v \in L_p(\Omega) 
\supseteq L_2(\Omega)$. 
We need to show that $f = 0$.
For this, we calculate that, for all $v \in L_p(\Omega)$,
\begin{align*}
0 = \langle f, Tv \rangle_{\Ht(\Omega)} = \sum_{j=1}^\infty \frac{\langle f, \varphi_j \rangle_{L_2(\Omega)} \langle Tv, \varphi_j \rangle_{L_2(\Omega)}}{\lambda_j^\theta} = \sum_{j=1}^\infty \frac{\langle f, \varphi_j \rangle_{L_2(\Omega)} \langle v, \varphi_j \rangle_{L_2(\Omega)}}{\lambda_j^{\theta-1}}.
\end{align*}
Since this holds for all $v \in L_p(\Omega)$, 
it needs to hold especially for $v = f$,
such that the previous equality turns into
\begin{align*}
0 = \sum_{j=1}^\infty \frac{|\langle f, \varphi_j \rangle_{L_2(\Omega)}|^2}{\lambda_j^{\theta-1}}.
\end{align*}
From this we conclude $\langle f, \varphi_j \rangle_{L_2(\Omega)} = 0$ for all $j \in \N$, and thus $f = 0$.

Now assume that $T(L_p(\Omega)) \hookrightarrow \mathcal{H}_\theta(\Omega)$ is surjective for some $0 \leq \theta^* < 2 - \frac{2-p}{p} \cdot 
\frac{d}{2\tau}$.
Then it follows that $\mathcal{H}_{\theta^*}(\Omega) = T(L_p(\Omega))$ as sets,
and in particular that
$\mathcal{H}_{\theta^*}(\Omega) \subset T(L_p(\Omega))$.
As both $T(L_p(\Omega))$ and $\mathcal{H}_{\theta^*}(\Omega)$ have continuous point evaluations,
\cite[Lemma 23]{scholpple2025spaces} yields the continuous embedding $\mathcal{H}_{\theta^*}(\Omega) \hookrightarrow T(L_p(\Omega))$.
Due to having embeddings in both directions, we can conclude that both the norms $\lVert \cdot \rVert_{T(L_p(\Omega))}$ and $\lVert \cdot 
\rVert_{\mathcal{H}_\theta(\Omega)}$ are equivalent.
This means that if we equip $T(L_p(\Omega))$ with the norm $\lVert \cdot \rVert_{\mathcal{H}_{\theta^*}(\Omega)}$, 
it would be a Hilbert space as well.
However, $(T(L_p(\Omega)), \lVert \cdot \rVert_{T(L_p(\Omega))})$ is isometrically isomorph to $(L_p(\Omega), \lVert \cdot \rVert_{L_p(\Omega)})$,
which is not a Hilbert space for $p \neq 2$.
As isometric isomorphisms preserve Hilbert space structures, this is a contradiction.
Thus $T(L_p(\Omega)) \hookrightarrow \mathcal{H}_\theta(\Omega)$ is not surjective for any $\theta^* < 2 - \frac{2-p}{p} \cdot \frac{d}{2\tau}$.
\item This is an immediate consequence from the dense embedding.
\end{enumerate}
\end{proof}

Without assuming specific properties on the operator $A$, 
a generalization of \Cref{th:embedding_power_spaces} from using the integral operator
\eqref{eq:T_operator} to using \eqref{eq:A_as_integral_operator} does not seem feasible.
When $A$ is a differentiation operator, it seems plausible that $A^* (L_p(\Omega))$ is related to a power space.
However, this is likely to require additional conditions that we have not been able to pinpoint.

\section{Boundary conditions}\label{sec:boundary}

We proved superconvergence for functions $v$ in the range $A^*(V')$ of the adjoint $A^*$ of a linear operator $A\in\call(\calh(\Omega), V)$, and this is in itself an exhaustive 
description of $v$.
However, as extensively discussed in~\cite{schaback2018superconvergence} there are many cases of interest where the condition $v\in A^*(V')$ can be formulated 
in terms of requirements of the smoothness of $v$ and on the realization of some boundary conditions. We are far from a complete systematization of this 
relation, but in this section we discuss some general principles and provide a few examples. 

We start by taking another look at the case of $A^*=T$ of Section~\ref{sec:superconv_mercer}, which is relevant also for the general case of 
Section~\ref{sec:superconv_in_images} thanks to Theorem~\ref{th:embedding_power_spaces}.

Consider a sufficiently regular set $\Omega\subset\R^d$ and a linear differential operator $D$ with suitable boundary conditions $B$ such 
that the problem
\begin{equation}\label{eq:pde_problem}
\begin{cases}
D v(x) = f(x),\;\;&x\in\Omega,\\
B v(x) = 0,\;\;&x\in\partial\Omega,
\end{cases}
\end{equation}
has a unique (weak or strong) solution given $f\in L_2(\Omega)$. If this PDE has a Green kernel $k$ that is symmetric and positive definite, then the solution 
is given by
\begin{equation}\label{eq:green_ker}
v(x) = \int_\Omega f(y) k(x, y) \mathrm{d} y.
\end{equation}
Since $k$ is symmetric and positive definite, we may also take the perspective of the previous sections and regard this expression as $v=T f=A^*f$, where $A\in\call(\calh(\Omega), L_2(\Omega))$ is the 
embedding operator. 
From the kernel side, we know that interpolating $v$ with $k$ will result in superconvergence since $v\in A^*(V')$. On the other hand, 
$A^*(V')$ is the space of solutions of the PDE~\eqref{eq:pde_problem} thanks to~\eqref{eq:green_ker}, and thus $v$ will have additional smoothness (depending on 
the order of $D$) and satisfy some boundary conditions (as defined by $B$).

Not all operators $A^*$ arise in this way as inverses of (pseudo)differential operators, and thus we should not expect to have in general a characterization of 
$A^*(V')$ in terms of smoothness and conditions on the boundary. 
For example, we may expect this to be the case for the general integral operators of Proposition~\ref{prop:A_as_integral_operator}, but not for the case of 
Example~\ref{ex:l1}. 

However, it turns out that this is always the case for Sobolev kernels and $A^*=T$, even if the precise definition of the differential operator and the 
boundary conditions in the PDE~\eqref{eq:pde_problem} depend not only on the equivalent Sobolev space $W_2^\tau(\Omega)$, but also on the specific 
(equivalent) norm of $\calh(\Omega)$.
Namely, Hubmer, Sherina and Ramlau~\cite{hubmer2023characterizations} consider the adjoint of embedding operators of Sobolev spaces into $L_2$, and observe that (in our notation) the 
equality~\eqref{eq:from_wendland_book} can be seen as a variational problem over $f\in\calh(\Omega)$, with $Tv$ as its unique solution. When $k$ is a 
Sobolev kernel and its inner product has a certain explicit form, this problem can be seen as the weak form of a PDE, which can be found by integrating by part 
this variational formulation and setting to zero the resulting boundary terms.
We refer to Section 3 of~\cite{hubmer2023characterizations} for further details on this construction, and report here a remarkable case.
\begin{prop}\label{prop:bvp}
Let $\Omega\subset\R^d$ be sufficiently smooth, $m\in\N$ with $m>d/2$, and let $k$ be the reproducing kernel of $W_2^m(\Omega)$ corresponding to the standard 
inner 
product~\eqref{eq:sobolev_norm}.
Then there are $m$ differential operators $N_i$ of order $i$ with $m\leq i\leq 2m -1$ such that, for each $f\in L_2(\Omega)$, $T f$ is the unique weak solution 
of the PDE
\begin{align*}
\sum_{|\alpha| \leq m} (-1)^{|\alpha|} D^{2\alpha} u(x) &= f(x), \;\;x\in\Omega,\\
N_{i} u(x) & = 0, \;\;x\in\partial \Omega,\;\; m\leq i\leq 2m-1.
\end{align*}
In particular, if $d=1$ and $\Omega=(a,b)$, $a,b\in\R$, then the boundary conditions are given by
\begin{equation*}
\sum_{j=i+1-m}^{m} (-1)^{j} D^{(2j-i-1+m)} u(x) = 0, \;\; x\in\{a, b\}, \;\; m\leq i\leq 2 m - 1.
\end{equation*}
\end{prop}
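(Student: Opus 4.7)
The plan is to start from the variational characterization of $Tf$ implied by \eqref{eq:from_wendland_book}. Since $k$ is the reproducing kernel associated to the standard inner product on $W_2^m(\Omega)$, the equality $\langle g, Tf\rangle_{W_2^m(\Omega)} = \langle g, f\rangle_{L_2(\Omega)}$ for all $g \in W_2^m(\Omega)$ can be rewritten as
\begin{equation*}
\sum_{|\alpha|\leq m} \int_\Omega D^\alpha g(x)\, D^\alpha (Tf)(x)\, \mathrm{d}x = \int_\Omega g(x) f(x)\, \mathrm{d}x \qquad \forall\, g \in W_2^m(\Omega).
\end{equation*}
Existence and uniqueness of $u=Tf \in W_2^m(\Omega)$ as the solution of this variational problem follows from Lax--Milgram applied to the bilinear form $\langle\cdot,\cdot\rangle_{W_2^m(\Omega)}$, which is coercive and continuous by definition.

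Next, I would apply integration by parts (Green's formula) $|\alpha|$ times to each term, moving all derivatives from $g$ to $u=Tf$. In the interior this produces $(-1)^{|\alpha|} D^{2\alpha} u$, and summing over $|\alpha|\leq m$ gives the strong-form operator $\sum_{|\alpha|\leq m}(-1)^{|\alpha|} D^{2\alpha} u$. First testing against $g\in C_c^\infty(\Omega)$ — where all boundary terms vanish — forces $\sum_{|\alpha|\leq m}(-1)^{|\alpha|} D^{2\alpha} u = f$ almost everywhere in $\Omega$, which is the PDE in the statement.

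Returning to a general test function $g\in W_2^m(\Omega)$ and subtracting the interior contribution, one is left with the sum of all boundary terms arising from the $|\alpha|$ successive integrations by parts. These boundary terms are linear combinations of derivatives of $g$ of orders $0,\dots,m-1$ (the highest is $m-1$, since each IBP step only removes one derivative from $g$ while peeling off boundary data), multiplied by derivatives of $u$ of orders ranging from $m$ to $2m-1$. Grouping by the order of the derivative of $g$ on the boundary gives $m$ independent boundary functionals acting on $u$; each is a differential operator $N_i$ of order $i\in\{m,\dots,2m-1\}$, because the orders of $g$ and $u$ in each pair sum to $2m-1$. Since the traces of $g$ and of its normal derivatives up to order $m-1$ on $\partial\Omega$ can be prescribed freely and independently (the classical trace theorem), each of the $m$ coefficients must vanish, yielding $N_i u = 0$ on $\partial\Omega$ for $m\leq i\leq 2m-1$. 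The main obstacle in this step is a careful bookkeeping of which orders appear and showing that the $m$ traces can be prescribed independently; for this one invokes a standard trace/lifting result for $W_2^m(\Omega)$ on a sufficiently smooth $\Omega$.

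For the one-dimensional case $\Omega=(a,b)$, the multi-index collapses to $\alpha=j\in\{0,\dots,m\}$ and integration by parts of $\int_a^b g^{(j)} u^{(j)}\,\mathrm{d}x$ produces an explicit alternating sum of boundary values of the form $(-1)^\ell g^{(j-1-\ell)} u^{(j+\ell)}\big|_a^b$ for $\ell = 0, \dots, j-1$. Collecting the coefficient of $g^{(m-1-i+m)}=g^{(2m-1-i)}$ (i.e.\ fixing the order of the derivative of $g$ that appears on the boundary) gives exactly the displayed formula
\begin{equation*}
\sum_{j=i+1-m}^{m}(-1)^{j} D^{(2j-i-1+m)} u(x) = 0, \qquad x\in\{a,b\},\;\; m\leq i\leq 2m-1,
\end{equation*}
after a change of summation index. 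This computation is routine once the general scheme above is in place, and serves mainly as a sanity check.
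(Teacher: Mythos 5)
Your proof follows exactly the route the paper delegates to its references: the paper's own ``proof'' simply cites Proposition~3.3 of Hubmer, Sherina and Ramlau~\cite{hubmer2023characterizations} for $d \geq 1$ and Proposition~1 of Saitoh~\cite{saitoh2003reproducing} for $d=1$, and those references establish the result by precisely the argument you sketch -- rewrite $\langle g, Tf\rangle_{W_2^m(\Omega)} = \langle g, f\rangle_{L_2(\Omega)}$ as a variational problem, integrate by parts, localize with $g \in C_c^\infty(\Omega)$ to get the interior equation, then use the trace theorem to read off the natural boundary conditions from the leftover boundary terms. So in substance this is the same approach, fleshed out where the paper merely cites.

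One piece of bookkeeping is off, though it does not break the argument. You claim the boundary terms pair $g$-derivatives of orders $0,\dots,m-1$ with $u$-derivatives of orders $m,\dots,2m-1$, ``because the orders of $g$ and $u$ in each pair sum to $2m-1$.'' That is only true for the contribution from $|\alpha| = m$. A term $\int_\Omega D^\alpha g\, D^\alpha u\,\mathrm{d}x$ with $|\alpha|=j < m$ produces boundary pairs whose orders sum to $2j-1$, so the full boundary expression involves $u$-derivatives of every order from $1$ up to $2m-1$. After collecting the coefficient of the $g$-trace of order $r$, the resulting operator $N$ contains $u$-derivatives of orders from $r+1$ up to $2m-1-r$; its \emph{leading} order is $2m-1-r$, which does give $m$ operators with leading orders $m,\dots,2m-1$, but they are not homogeneous of a single order -- your own $d=1$ computation, and the example $N_3 u = u'''-u'$ in Section~\ref{sec:green_1d}, show this. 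State the claim in terms of the leading-order pair only, and the argument is clean. The appeals to Lax--Milgram, $C_c^\infty$ localization, and trace/lifting are all appropriate.
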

\begin{proof}
The general part for $d\geq1$ is essentially Proposition 3.3 in~\cite{hubmer2023characterizations}, which shows that if $A \colon W_2^m(\Omega)\to L_2(\Omega)$ is the 
embedding operator, then $A^* f$ solves the given PDE.

For $d=1$, the result follows from Proposition 1 in~\cite{saitoh2003reproducing}, which states that the reproducing kernel $k$ of $W^m_2(a, b)$ with its 
standard inner product is the Green kernel of the given PDE, from which the result follows by the discussion at the beginning of this section. We remark that 
the boundary conditions were given in~\cite{saitoh2003reproducing} (see equation (46) ) as
\begin{equation*}
\sum_{j=n}^m (-1)^j D^{(2j-n)} u(x) = 0, \;\; x\in\{a, b\}, \;\; n=1, 2, \dots, m,
\end{equation*}
which we rewrote to match the notation of the case $d>1$.
\end{proof}

We conclude this section with a number of remarks.

\begin{rem}
The paper~\cite{hubmer2023characterizations} provides similarly explicit results also for other equivalent norms on $W_2^m(\Omega)$, see for 
example Proposition 3.4 for the case for the equivalent norm $\norm{L_2(\Omega)}{u} + \seminorm{W^m_2(\Omega)}{u}$. 
Moreover, the case of $W_2^\tau(\R)$ with $\tau\notin\N$ is also addressed in~\cite{hubmer2023characterizations}, but only on the whole space.
\end{rem}

\begin{rem}
As anticipated in \Cref{sec:intro}, for Sobolev kernels we see that not all functions in a smoother Sobolev space can be approximated 
by a rougher kernel with optimal rates, differently to what happens to rough functions being approximated by smoother kernels 
(as happens in the misspecified setting, see~\cite{narcowich2006sobolev}). 
However, interpolating a smoother function with a correspondingly smoother kernel may lead to a worse stability, since the eigenvalues of the kernel matrix of a Sobolev kernel are known to decay according to the smoothness of the kernel. This may be regarded as an instance of a trade-off principle, which is common in kernel 
interpolation (see \cite{schaback1995error,schaback2023instabilities}). 
\end{rem}

\begin{rem}\label{rem:boundary_interpolation}
In the Sobolev case discussed here, one could expect that the interpolation space $(\calh(\Omega),A^*(V'))_{\theta, \infty}$ is a space of functions of 
corresponding 
intermediate smoothness between $A^*(V')$ and $\calh(\Omega)$, and satisfying only certain boundary conditions.
This then would also explain the additional $h^{1/2}$ superconvergence rates observed in e.g.\ \cite[Section 8]{schaback2018superconvergence} for interpolation using Sobolev kernels and 
even proven in \cite[Theorem 1.5]{johnson2001error} and \cite[Proposition 2]{lohndorf2017thin} for the case of surface spline interpolation.
Some experiments to test this hypothesis are reported in \Cref{subsec:sobolev_kernels} and \Cref{sec:ex_bcs}. % Section~\ref{sec:experiments}.

Moreover, in this direction, we mention that Acquistapace and Terreni~\cite{acquistapace1987hoelder} show the following: Consider a PDE defined by a linear differential operator of 
order 
$2m\in\N$ and boundary conditions of order $m_1, \dots m_j$, and denote as $D$ the corresponding set of solutions. 
Then $(C(\overline \Omega),D)_{\theta, \infty}$ is precisely the space of functions of smoothness $2m\theta$ which satisfy only the boundary conditions up to 
smoothness $m_j\leq \lfloor 2m\theta\rfloor$ (see Theorem 2.3 in~\cite{acquistapace1987hoelder}).
We remark that the result requires a number of assumptions on $\Omega$ and on the operators, for which we refer to the original paper. The main difference 
with our scenario is the use of $C(\overline \Omega)$ in place of $\calh(\Omega)$.

Moreover, the interpolation setting is quite clear for some special differential operators when treated from the point of view of fractional powers of their 
spectrum, as in Section~\ref{sec:superconv_mercer}. For example,  we refer to \cite{musina2016fractional} for the case of the Laplacian.
\end{rem}

\subsection{Explicit examples in one dimension}\label{sec:green_1d}
We show the case of two symmetric and strictly positive definite kernels $k, k'$ whose RKHSs are equal to $W_2^1(0,1)$ with different but equivalent 
norms.
Moreover, they are Green kernels of the same differential operator, when 
completed to a uniquely solvable PDE by means of two different sets of boundary conditions. 
This shows in  particular an explicit example when the boundary conditions required for superconvergence are not determined up to norm equivalence.

Given $f\in L_2(0,1)$, for $\Omega=(0,1)$ and $d=m=1$ the PDE of Proposition~\ref{prop:bvp} is
\begin{equation}\label{eq:one-d-pde}
\begin{cases}
\begin{aligned}
-u''(x) + u(x) &= f(x), \;\; x\in(0,1),\\
u'(0)=u'(1)&=0,
\end{aligned}
\end{cases}
\end{equation}
and, according to Proposition 1 in~\cite{saitoh2003reproducing}, its Green kernel is the symmetric and strictly positive definite 
\begin{equation*}
k(x,y)\coloneqq \frac{\cosh(\min(x,y))\cosh(1-\max(x,y))}{\sinh(e)}.
\end{equation*}
Following \cite{saitoh2003reproducing} (see also Example 8.2 (a) in \cite{fasshauer2015kernel}), this is the reproducing kernel of $W_2^1(0,1)$ with the 
standard inner product
\begin{equation*}
\inner{\calh(k, (0,1))}{u,v} \coloneqq \inner{W_2^1(0,1)}{u,v}= \int_0^1 [u(x) v(x) + u'(x) v'(x) ] \mathrm{d} x.
\end{equation*}
If instead the boundary conditions in~\eqref{eq:one-d-pde} are replaced by $u'(0)-u(0)=u'(1)+u(1)=0$, then the Green kernel 
is the symmetric and strictly positive definite Mat\'ern kernel
\begin{equation*}
k'(x,y)\coloneqq \frac12 e^{-|x-y|},
\end{equation*}
as can be verified by direct computation. %We remark that this is the only trans
Following Theorem 1.3 in \cite{saitoh2016theory}, this is the reproducing kernel of $W_2^1(0,1)$ with the inner product
\begin{equation*}
\inner{\calh(k', (0,1))}{u,v}\coloneqq \int_0^1 [ u(x) v(x) + u'(x) v'(x) ] \mathrm{d} x + u(0)v(0) + u(1)v(1).
\end{equation*}

This means that $T(L_2(0,1))$ for the two kernels is the space of solutions of the two corresponding PDEs, meaning that this space contains 
functions with additional smoothness, but with different boundary conditions depending on the kernel.
Namely, in $\calh(k, (0,1))$ we have
\begin{equation*}
T(L_2(0,1)) = \{u\in W_2^2(0,1): u'(0)=u'(1) = 0\},
\end{equation*}
while in $\calh(k', (0,1))$ it is
\begin{equation*}
T(L_2(0,1)) = \{u\in W_2^2(0,1): u'(0)-u(0)=u'(1) +u(1)= 0\}.
\end{equation*}
To see that the two RKHSs are norm equivalent, observe that
\begin{equation*}
\norm{\calh(k, (0,1))}{u}^2
= \int_0^1 [ u^2(x) + (u'(x))^2 ] \mathrm{d} x
\leq \int_0^1 [ u^2(x) + (u'(x))^2 ] \mathrm{d} x + u^2(0) + u^2(1)
=\norm{\calh(k', (0,1))}{u}^2.
\end{equation*}
Moreover, for all $u\in W_2^1(0,1)$ and $x, x_0\in[0,1]$ we have
\begin{equation*}
|u(x_0)| \leq |u(x)| + \norm{L_2(0,1)}{u'} \sqrt{|x-x_0|},
\end{equation*}
which implies that
\begin{align*}
|u(x_0)|^2 
&= \int_0^1 |u(x_0)|^2 \mathrm{d} x 
\leq \int_0^1 |u(x)|^2  \mathrm{d} x + \norm{L_2(0,1)}{u'}^2 \int_0^1|x-x_0| \mathrm{d} x\\
&\leq \norm{L_2(0,1)}{u}^2 + \frac12\norm{L_2(0,1)}{u}^2
\leq \norm{W_2^1(0,1)}{u}.
\end{align*}
Thus
\begin{equation*}
\norm{\calh(k', (0,1))}{u}^2
= \int_0^1 [ u^2(x) + (u'(x))^2 ] \mathrm{d} x + u^2(0) + u^2(1)
\leq 3 \norm{W_2^1(0,1)}{u}
= 3 \norm{\calh(k, (0,1))}{u},
\end{equation*}
completing the norm equivalence.

\section{Numerical experiments and examples}\label{sec:experiments}

In the following we consider some examples that illustrate the continuous superconvergence bounds derived in the previous sections.

All experiments are defined on $\Omega=[0,1]^d$ for $d=1$ or $d=2$, where we consider Sobolev kernels and functions $f$ of various regularities. 
In all cases, given $n_0,n_{\max}\in\N$ we consider approximation of $f$ by the kernel interpolant, denoted $I_n f$ here, on $n=n_0, \dots, n_{\max}$ equally spaced points in the interior 
of $\Omega$. We omit the points on the boundary to avoid partially enforcing boundary conditions, which would sometimes result in polluted 
convergence rates. Observe that in this setting the fill distance scales as $h_X \asymp n^{-1/d}$.

The errors are evaluated on a finer grid of equally spaced points (now including the boundary), where the $L_1, L_2$ and $L_\infty$ errors $e_n = \lVert f - I_n f \rVert_{L_p(\Omega)}$ for $p \in \{1, 2, \infty\}$ are approximated in 
the standard way by their discrete counterparts. 
Theory predicts that the errors decay as $h_X^{a} \asymp n^{-a/d}$ for some convergence rate $a > 0$.
If $\{(n, e_n)\}_{n}$ is a sequence of such errors, the corresponding empirical convergence rate $a$ is computed from the slope of the linear fit on $\{(\log(n), \log(e_n))\}_{n}$.
The code to reproduce the results is available upon request.

\subsection{Sobolev kernels}
\label{subsec:sobolev_kernels}
We consider the basic, linear and quadratic Matérn kernels, which are radial basis function kernels given as
\begin{align}
\begin{aligned}
\label{eq:matern_kernels}
k_\text{basic}(x, z) &= \exp(-\Vert x - z \Vert) \\
k_\text{linear}(x, z) &= \exp(-\Vert x - z \Vert) \cdot (1 + \Vert x - z \Vert) \\
k_\text{quadratic}(x, z) &= \exp(-\Vert x - z \Vert) \cdot (3 + 3\Vert x - z \Vert + \Vert x - z \Vert^2).
\end{aligned}
\end{align}
For all these kernels we consider the approximation of $f_\alpha(x) = x_1^\alpha$ in $\Omega = [0, 1]^d$ for $d \in \{1, 2\}$.
The respective RKHS $\calh$ are normequivalent to $W_2^\tau(\Omega)$ for $\tau_\text{basic} = \frac{d+1}{2}$, $\tau_\text{linear} = \frac{d+3}{2}$ and $\tau_\text{quadratic} = \frac{d+5}{2}$.
Except for integer values of $\alpha$, the Sobolev smoothness of $f_\alpha$ scales according to $\alpha$, i.e.\ it holds
\begin{equation*}
f_\alpha \in W_2^{\sigma}(\Omega) \qquad \text{if and only if } \sigma < \alpha + 1/2.
\end{equation*}
However, as $f_\alpha$ does not satisfy any particular boundary conditions, we cannot expect $f_\alpha$ to be included in all the power spaces 
$\mathcal{H}_\theta(\Omega)$ for $\theta > 1$. 

The observed convergence rates are visualized in \Cref{fig:sobolev_conv_rates}.
One can observe that the $L_2(\Omega)$-convergence rates nicely scale according to the Sobolev smoothness $\alpha + 1/2$ up to $\tau + 1/2$, which corresponds to $\theta = 1 + 
\frac{1}{2\tau}$. 
Beyond, the convergence rates saturate.
Such a behaviour was already observed in \cite[Section]{schaback2018superconvergence}.
We point to \Cref{rem:boundary_interpolation} for an explanation of the additional rate of decay up to $h_X^{1/2}$.

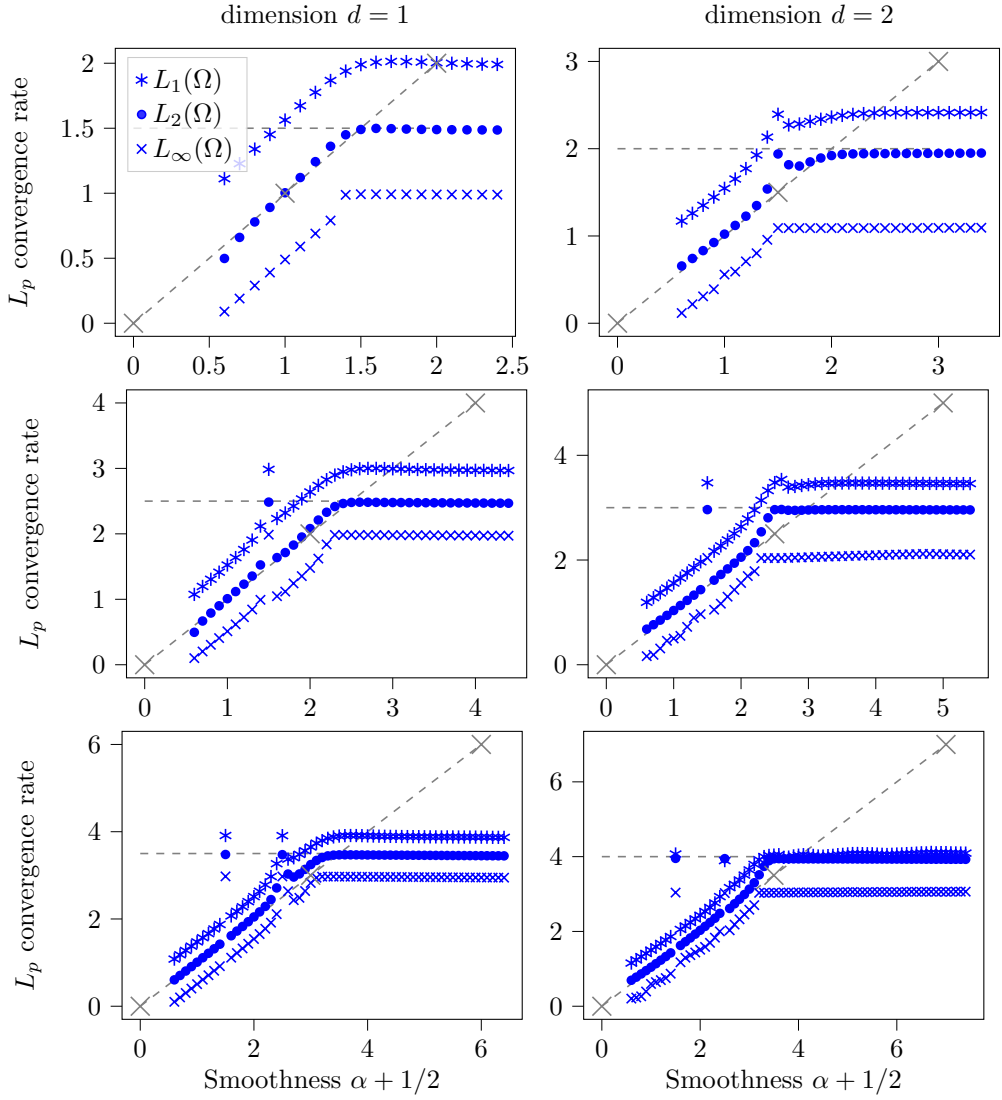
\begin{figure}[htp!]
\centering
\setlength\fwidth{.48\textwidth}
% This file was created with tikzplotlib v0.10.1.
\begin{tikzpicture}

\definecolor{darkgray176}{RGB}{176,176,176}
\definecolor{gray}{RGB}{128,128,128}
\definecolor{lightgray204}{RGB}{204,204,204}

\begin{axis}[
width=0.951\fwidth,
height=0.75\fwidth,
at={(0\fwidth,0\fwidth)},
legend cell align={left},
legend style={
  fill opacity=0.8,
  draw opacity=1,
  text opacity=1,
  at={(0.03,0.97)},
  anchor=north west,
  draw=lightgray204
},
tick align=outside,
tick pos=left,
title={dimension $d=1$},
x grid style={darkgray176},
xmin=-0.12, xmax=2.52,
xtick style={color=black},
y grid style={darkgray176},
ylabel={$L_p$ convergence rate},
ymin=-0.100690288557874, ymax=2.11449605971536,
ytick style={color=black}
]
\addplot [semithick, blue, mark=asterisk, mark size=2.5, mark options={solid}, only marks]
table {%
0.6 1.1118629344269
0.7 1.22774286079347
0.8 1.33947439115909
0.9 1.45124630394285
1 1.56295219107124
1.1 1.67256992127859
1.2 1.77601993496688
1.3 1.86724027992785
1.4 1.93962779601117
1.5 1.98904076817102
1.6 2.00781423770332
1.7 2.01380577115749
1.8 2.01280351340302
1.9 2.0088546679555
2 2.00420949037945
2.1 1.99992023516537
2.2 1.99611402789013
2.3 1.9930319390239
2.4 1.99058408194657
};
\addlegendentry{$L_1(\Omega)$}
\addplot [semithick, blue, mark=*, mark size=1.5, mark options={solid}, only marks]
table {%
0.6 0.498207491357874
0.7 0.660211712971495
0.8 0.779121259941568
0.9 0.890904215264849
1 1.00344981515589
1.1 1.12059030216421
1.2 1.24293974786486
1.3 1.36110378800409
1.4 1.44964145287855
1.5 1.49033361492666
1.6 1.49873645531501
1.7 1.49671437684234
1.8 1.49350771559452
1.9 1.49112310287667
2 1.48954541369503
2.1 1.48846709134137
2.2 1.48765616557448
2.3 1.48697874842549
2.4 1.48636492870001
};
\addlegendentry{$L_2(\Omega)$}
\addplot [semithick, blue, mark=x, mark size=2.5, mark options={solid}, only marks]
table {%
0.6 0.0900038684716305
0.7 0.190003868471645
0.8 0.290003868471668
0.9 0.390003868471657
1 0.490003868471677
1.1 0.590003868471609
1.2 0.690003868471589
1.3 0.790003868471654
1.4 0.98759004389547
1.5 0.992519478799062
1.6 0.992137701967141
1.7 0.99174519625827
1.8 0.991343401720101
1.9 0.990933517585402
2 0.990516550590304
2.1 0.990093352107543
2.2 0.989664647003299
2.3 0.989231056292589
2.4 0.98879311509415
};
\addlegendentry{$L_\infty(\Omega)$}
\addplot [semithick, gray, dashed, mark=x, mark size=5, mark options={solid}, forget plot]
table {%
0 0
1 1
2 2
};
\addplot [semithick, gray, dashed, forget plot]
table {%
0 1.5
2 1.5
};
\end{axis}

\end{tikzpicture}
% This file was created with tikzplotlib v0.10.1.
\begin{tikzpicture}

\definecolor{darkgray176}{RGB}{176,176,176}
\definecolor{gray}{RGB}{128,128,128}
\definecolor{lightgray204}{RGB}{204,204,204}

\begin{axis}[
width=0.951\fwidth,
height=0.75\fwidth,
at={(0\fwidth,0\fwidth)},
legend cell align={left},
legend style={
  fill opacity=0.8,
  draw opacity=1,
  text opacity=1,
  at={(0.03,0.97)},
  anchor=north west,
  draw=lightgray204
},
tick align=outside,
tick pos=left,
title={dimension $d=2$},
x grid style={darkgray176},
xmin=-0.17, xmax=3.57,
xtick style={color=black},
y grid style={darkgray176},
ymin=-0.15, ymax=3.15,
ytick style={color=black}
]
\addplot [semithick, blue, mark=asterisk, mark size=2.5, mark options={solid}, only marks]
table {%
0.6 1.17144988497847
0.7 1.26013095430508
0.8 1.35091660783731
0.9 1.445212272657
1 1.54612387528762
1.1 1.65294903209765
1.2 1.77268849978601
1.3 1.92745052704435
1.4 2.13289938010347
1.5 2.3946520418283
1.6 2.27258194308632
1.7 2.28355914316586
1.8 2.31340005922954
1.9 2.33761933613023
2 2.36307794562538
2.1 2.3814918748474
2.2 2.39375435531192
2.3 2.40276243172934
2.4 2.40829864330242
2.5 2.41080165937085
2.6 2.41200950392268
2.7 2.41263279164876
2.8 2.41300111288463
2.9 2.41317143288866
3 2.41338572189302
3.1 2.41365486411799
3.2 2.41394287790278
3.3 2.41430667659239
3.4 2.41478405517242
};
% \addlegendentry{L1 error}
\addplot [semithick, blue, mark=*, mark size=1.5, mark options={solid}, only marks]
table {%
0.6 0.656264008954002
0.7 0.74227106653321
0.8 0.831966609202539
0.9 0.924943032670993
1 1.02108820495121
1.1 1.12093303153737
1.2 1.22682099780165
1.3 1.34831436703432
1.4 1.53791790173453
1.5 1.93963439387119
1.6 1.81630157552859
1.7 1.80147378415858
1.8 1.84884478870322
1.9 1.89340834218673
2 1.92147855415222
2.1 1.93517560479821
2.2 1.94066342317652
2.3 1.94262799222396
2.4 1.94342660905671
2.5 1.94396748466081
2.6 1.94452174456874
2.7 1.94513919740083
2.8 1.94581058599339
2.9 1.94651878190657
3 1.94725076977932
3.1 1.94799837594257
3.2 1.94875678565906
3.3 1.94952318958806
3.4 1.95029590585884
};
% \addlegendentry{L2 error}
\addplot [semithick, blue, mark=x, mark size=2.5, mark options={solid}, only marks]
table {%
0.6 0.117092771349308
0.7 0.218236097682263
0.8 0.308262828292957
0.9 0.389588347607675
1 0.558073107190438
1.1 0.59181383913668
1.2 0.708844684304001
1.3 0.802578064858049
1.4 0.95600026998617
1.5 1.09083196729712
1.6 1.09065730186501
1.7 1.09053596139179
1.8 1.0904623437676
1.9 1.09043277213341
2 1.09044461839299
2.1 1.09049583295761
2.2 1.09058469415632
2.3 1.09070967583586
2.4 1.09086937786369
2.5 1.09106248991772
2.6 1.09128777257861
2.7 1.09154404714688
2.8 1.09183018980093
2.9 1.09214512792375
3 1.09248783723218
3.1 1.09285733941116
3.2 1.09325269987552
3.3 1.09367302551839
3.4 1.0941174626409
};
% \addlegendentry{Linfty error}
\addplot [semithick, gray, dashed, mark=x, mark size=5, mark options={solid}, forget plot]
table {%
0 0
1.5 1.5
3 3
};
\addplot [semithick, gray, dashed, forget plot]
table {%
0 2
3 2
};
\end{axis}

\end{tikzpicture}
% This file was created with tikzplotlib v0.10.1.
\begin{tikzpicture}

\definecolor{darkgray176}{RGB}{176,176,176}
\definecolor{gray}{RGB}{128,128,128}
\definecolor{lightgray204}{RGB}{204,204,204}

\begin{axis}[
width=0.951\fwidth,
height=0.75\fwidth,
at={(0\fwidth,0\fwidth)},
legend cell align={left},
legend style={
  fill opacity=0.8,
  draw opacity=1,
  text opacity=1,
  at={(0.03,0.97)},
  anchor=north west,
  draw=lightgray204
},
tick align=outside,
tick pos=left,
x grid style={darkgray176},
xmin=-0.22, xmax=4.62,
xtick style={color=black},
y grid style={darkgray176},
ylabel={$L_p$ convergence rate},
ymin=-0.2, ymax=4.2,
ytick style={color=black}
]
\addplot [semithick, blue, mark=*, mark size=1.5, mark options={solid}, only marks]
table {%
0.6 0.49535808235919
0.7 0.668014328412458
0.8 0.789737589171719
0.9 0.900648692227114
1 1.00897384084901
1.1 1.11795443087499
1.2 1.23073460069045
1.3 1.35473888787832
1.4 1.52495123979597
1.5 2.48492514017152
1.6 1.63719672033793
1.7 1.71407304277045
1.8 1.82839617972049
1.9 1.95130039497548
2 2.08024603368703
2.1 2.21073772569871
2.2 2.32985401716327
2.3 2.41758534117403
2.4 2.46385977523157
2.5 2.48019567684186
2.6 2.48334355745422
2.7 2.48250958614308
2.8 2.48097406898393
2.9 2.47958984348022
3 2.47846180690715
3.1 2.4775237360184
3.2 2.47670004902126
3.3 2.47593785417483
3.4 2.47520547520912
3.5 2.47448523117876
3.6 2.47376781322666
3.7 2.47304851075866
3.8 2.47232504924393
3.9 2.47159649621131
4 2.47086255330593
4.1 2.47012325099701
4.2 2.46937876136373
4.3 2.46862934356466
4.4 2.46787525559487
};
% \addlegendentry{L2 error}
\addplot [semithick, blue, mark=x, mark size=2.5, mark options={solid}, only marks]
table {%
0.6 0.101039259680148
0.7 0.202644003453403
0.8 0.304706563518421
0.9 0.407448865072527
1 0.511263928696779
1.1 0.616920475144786
1.2 0.726152787229605
1.3 0.84387751956829
1.4 0.991789064721972
1.5 1.98741681870213
1.6 1.04744990823613
1.7 1.11797007168738
1.8 1.24269423921848
1.9 1.35433696758604
2 1.47844915060969
2.1 1.62191528828081
2.2 1.83819838949396
2.3 1.98315110332792
2.4 1.98380627174541
2.5 1.98328161590887
2.6 1.98273923638942
2.7 1.98218061493877
2.8 1.98160711901948
2.9 1.98101998765275
3 1.98042035424339
3.1 1.97980922876089
3.2 1.97918753698091
3.3 1.97855609588036
3.4 1.97791569607407
3.5 1.97726699759868
3.6 1.97661061477981
3.7 1.97594713990977
3.8 1.97527706094092
3.9 1.97460086101342
4 1.97391896824819
4.1 1.97323177508999
4.2 1.97253963461766
4.3 1.9718428848774
4.4 1.97114180166177
};
% \addlegendentry{Linfty error}
\addplot [semithick, blue, mark=asterisk, mark size=2.5, mark options={solid}, only marks]
table {%
0.6 1.07461172759958
0.7 1.19625964996525
0.8 1.30601546794943
0.9 1.41412148650287
1 1.5236414884806
1.1 1.63736829314943
1.2 1.76031776062992
1.3 1.90513070102785
1.4 2.11768062310865
1.5 2.98505303859043
1.6 2.23384115201772
1.7 2.31965956921788
1.8 2.42444234205506
1.9 2.53321083347468
2 2.64027954055666
2.1 2.74041024621175
2.2 2.82797772080136
2.3 2.89793989400651
2.4 2.94782145039572
2.5 2.97873885693421
2.6 2.99467485673703
2.7 3.00055210599904
2.8 3.0006053149772
2.9 2.99780267916963
3 2.99393869434028
3.1 2.98997346760838
3.2 2.98634397544082
3.3 2.98320525526334
3.4 2.9805570167558
3.5 2.97834385751867
3.6 2.97648652085469
3.7 2.97490877254937
3.8 2.97354806636274
3.9 2.97234714919951
4 2.97126309786535
4.1 2.97026335989869
4.2 2.96932334957376
4.3 2.96842403560283
4.4 2.96755601430118
};
% \addlegendentry{L1 error}
\addplot [semithick, gray, dashed, mark=x, mark size=5, mark options={solid}, forget plot]
table {%
0 0
2 2
4 4
};
\addplot [semithick, gray, dashed, forget plot]
table {%
0 2.5
4 2.5
};
\end{axis}

\end{tikzpicture}
% This file was created with tikzplotlib v0.10.1.
\begin{tikzpicture}

\definecolor{darkgray176}{RGB}{176,176,176}
\definecolor{gray}{RGB}{128,128,128}
\definecolor{lightgray204}{RGB}{204,204,204}

\begin{axis}[
width=0.951\fwidth,
height=0.75\fwidth,
at={(0\fwidth,0\fwidth)},
legend cell align={left},
legend style={
  fill opacity=0.8,
  draw opacity=1,
  text opacity=1,
  at={(0.03,0.97)},
  anchor=north west,
  draw=lightgray204
},
tick align=outside,
tick pos=left,
x grid style={darkgray176},
xmin=-0.27, xmax=5.67,
xtick style={color=black},
y grid style={darkgray176},
ymin=-0.25, ymax=5.25,
ytick style={color=black}
]
\addplot [semithick, blue, mark=asterisk, mark size=2.5, mark options={solid}, only marks]
table {%
0.6 1.19220802644604
0.7 1.28137078963965
0.8 1.3723845899683
0.9 1.46507695219292
1 1.55933466299215
1.1 1.65515668158755
1.2 1.75289410986466
1.3 1.85396134496279
1.4 1.96619055603522
1.5 3.47577711904289
1.6 2.16674537852645
1.7 2.27028440478317
1.8 2.38343522074156
1.9 2.50391474884524
2 2.63612652451196
2.1 2.78460246117121
2.2 2.95346939002897
2.3 3.14028015415817
2.4 3.3267152090268
2.5 3.47921957198077
2.6 3.53834512587331
2.7 3.39238529927322
2.8 3.40492539319665
2.9 3.423218401439
3 3.43950180937755
3.1 3.45208526217281
3.2 3.46042476484761
3.3 3.46574812144603
3.4 3.46877262557868
3.5 3.4703346899907
3.6 3.47079234203877
3.7 3.47059983207097
3.8 3.47002943157981
3.9 3.46925419265937
4 3.46842571180203
4.1 3.46754859292849
4.2 3.46673650908174
4.3 3.46598624665887
4.4 3.46532683447715
4.5 3.46471732588567
4.6 3.46411859145429
4.7 3.46351444779032
4.8 3.46291578678892
4.9 3.46231119111209
5 3.46172929199061
5.1 3.46114270155185
5.2 3.46056953699117
5.3 3.46003165368869
5.4 3.45948906111632
};
% \addlegendentry{L1 error}
\addplot [semithick, blue, mark=*, mark size=1.5, mark options={solid}, only marks]
table {%
0.6 0.679950667434329
0.7 0.764264662091859
0.8 0.85203920798092
0.9 0.942768621986263
1 1.03602085139563
1.1 1.13147165141408
1.2 1.2289754907724
1.3 1.32882705475189
1.4 1.43359605743788
1.5 2.96227078088163
1.6 1.6150493877734
1.7 1.72351496183542
1.8 1.83058910530011
1.9 1.9398539842613
2 2.05419700652961
2.1 2.17952745773987
2.2 2.33047499405546
2.3 2.53851742588579
2.4 2.80638976464206
2.5 2.961965295063
2.6 2.965308478663
2.7 2.95051933138685
2.8 2.94824400234104
2.9 2.95175151367988
3 2.95570710067717
3.1 2.95844708579252
3.2 2.95995856022936
3.3 2.96064312076356
3.4 2.96086217846632
3.5 2.96084682697964
3.6 2.9607228768083
3.7 2.96055233135742
3.8 2.96036341621482
3.9 2.96016815108658
4 2.95997119850147
4.1 2.9597742274864
4.2 2.95957754548067
4.3 2.95938137802691
4.4 2.95918555083404
4.5 2.95899003208312
4.6 2.95879471495325
4.7 2.95859967491423
4.8 2.95840487011675
4.9 2.9582102456467
5 2.95801585881332
5.1 2.9578217073982
5.2 2.95762778634123
5.3 2.95743405203896
5.4 2.95724056264152
};
% \addlegendentry{L2 error}
\addplot [semithick, blue, mark=x, mark size=2.5, mark options={solid}, only marks]
table {%
0.6 0.163748479951399
0.7 0.189249517508107
0.8 0.310540598347274
0.9 0.45774397430045
1 0.500439910026045
1.1 0.551436021203
1.2 0.723300124786996
1.3 0.895150957055159
1.4 0.96582612927566
1.5 2.04049454659814
1.6 1.0509747395857
1.7 1.16593155114332
1.8 1.29972772774651
1.9 1.43092099053323
2 1.56172195634812
2.1 1.69351298583313
2.2 1.78650737385134
2.3 2.03728180655919
2.4 2.03717952118508
2.5 2.03711582708515
2.6 2.03709749302637
2.7 2.0407279440727
2.8 2.04494178624215
2.9 2.04911108293353
3 2.05323635046612
3.1 2.05731749834401
3.2 2.06135464281432
3.3 2.06534773157598
3.4 2.06929698360077
3.5 2.0732020542874
3.6 2.07706329671054
3.7 2.08088069523286
3.8 2.08465458426749
3.9 2.08838541417462
4 2.09207343711892
4.1 2.09571896118093
4.2 2.09932262881412
4.3 2.10288493192615
4.4 2.10640639108786
4.5 2.10988751679175
4.6 2.1133286899445
4.7 2.11673082256161
4.8 2.11631111966295
4.9 2.11363485102832
5 2.11103623460122
5.1 2.1085128244857
5.2 2.10606205686378
5.3 2.10368164476613
5.4 2.10385544433356
};
% \addlegendentry{Linfty error}
\addplot [semithick, gray, dashed, mark=x, mark size=5, mark options={solid}, forget plot]
table {%
0 0
2.5 2.5
5 5
};
\addplot [semithick, gray, dashed, forget plot]
table {%
0 3
5 3
};
\end{axis}

\end{tikzpicture}
% This file was created with tikzplotlib v0.10.1.
\begin{tikzpicture}

\definecolor{darkgray176}{RGB}{176,176,176}
\definecolor{gray}{RGB}{128,128,128}
\definecolor{lightgray204}{RGB}{204,204,204}

\begin{axis}[
width=0.951\fwidth,
height=0.75\fwidth,
at={(0\fwidth,0\fwidth)},
legend cell align={left},
legend style={
  fill opacity=0.8,
  draw opacity=1,
  text opacity=1,
  at={(0.03,0.97)},
  anchor=north west,
  draw=lightgray204
},
tick align=outside,
tick pos=left,
x grid style={darkgray176},
xlabel={Smoothness $\alpha+1/2$},
xmin=-0.32, xmax=6.72,
xtick style={color=black},
y grid style={darkgray176},
ylabel={$L_p$ convergence rate},
ymin=-0.3, ymax=6.3,
ytick style={color=black}
]
\addplot [semithick, blue, mark=*, mark size=1.5, mark options={solid}, only marks]
table {%
0.6 0.604777742587994
0.7 0.705580546005859
0.8 0.806632057198667
0.9 0.907902178983624
1 1.00940094501817
1.1 1.11113928178574
1.2 1.2133643416141
1.3 1.31634006047332
1.4 1.42192312835476
1.5 3.4779243581519
1.6 1.61515547701962
1.7 1.72245190808217
1.8 1.82861770272225
1.9 1.93633599516491
2 2.04648942214963
2.1 2.1618337780687
2.2 2.28818783033524
2.3 2.44448897304304
2.4 2.71166338076895
2.5 3.47428175019614
2.6 3.02975662050654
2.7 2.96117495265338
2.8 3.03229360011394
2.9 3.137449434747
3 3.24660816759262
3.1 3.34137755315984
3.2 3.40822614052149
3.3 3.44618840255408
3.4 3.4632549781133
3.5 3.46917524469038
3.6 3.47029674556989
3.7 3.46973966945887
3.8 3.46874357756638
3.9 3.46773886599407
4 3.46663586390003
4.1 3.46595792316334
4.2 3.46499597817776
4.3 3.46441065297587
4.4 3.46348312011638
4.5 3.46284979621376
4.6 3.46208037146019
4.7 3.46131299103459
4.8 3.4604944229713
4.9 3.45962611478391
5 3.45886419849351
5.1 3.45803686235517
5.2 3.45711277456188
5.3 3.45639783800282
5.4 3.45551314583535
5.5 3.45460633954837
5.6 3.45372830645075
5.7 3.45293335395198
5.8 3.4520322030162
5.9 3.45111728700449
6 3.45022492160704
6.1 3.44945047313482
6.2 3.44859175004518
6.3 3.44766125974819
6.4 3.44655857273417
};
% \addlegendentry{L2 error}
\addplot [semithick, blue, mark=x, mark size=2.5, mark options={solid}, only marks]
table {%
0.6 0.100964534277553
0.7 0.202225870298186
0.8 0.303348812113074
0.9 0.404699215274054
1 0.506400787501303
1.1 0.607759530924507
1.2 0.710232461640476
1.3 0.813005100586609
1.4 0.917969599115794
1.5 2.97622016291701
1.6 1.11273603318875
1.7 1.2190974745437
1.8 1.32496489690841
1.9 1.43232759761677
2 1.54111334522374
2.1 1.65449136234492
2.2 1.77627192845728
2.3 1.91483135123237
2.4 2.11089120373219
2.5 2.97430433721763
2.6 2.63606629010622
2.7 2.4314255213407
2.8 2.48447615321668
2.9 2.64049101161546
3 2.82546936956149
3.1 2.94554984727683
3.2 2.97063153618353
3.3 2.97028168380005
3.4 2.96999236144971
3.5 2.96931674707056
3.6 2.96861379063236
3.7 2.96816571930879
3.8 2.96741073861197
3.9 2.96677141322939
4 2.96593349597381
4.1 2.96553552632867
4.2 2.96456725108225
4.3 2.96428048226044
4.4 2.963092168815
4.5 2.96284399779514
4.6 2.9619362439911
4.7 2.9612314132848
4.8 2.96035804886476
4.9 2.95957197246804
5 2.95848030996794
5.1 2.95807020259712
5.2 2.95712012623808
5.3 2.95649906659575
5.4 2.95549467008895
5.5 2.9544779118286
5.6 2.95373451022109
5.7 2.95294692810395
5.8 2.95184845738805
5.9 2.9511108860122
6 2.95058256993299
6.1 2.94961249494605
6.2 2.94859863095228
6.3 2.94788806159261
6.4 2.94672861858662
};
% \addlegendentry{Linfty error}
\addplot [semithick, blue, mark=asterisk, mark size=2.5, mark options={solid}, only marks]
table {%
0.6 1.08031610092522
0.7 1.17944793334971
0.8 1.27825064665892
0.9 1.37678962664921
1 1.47452614304121
1.1 1.57037261276123
1.2 1.66406711124404
1.3 1.77099303789604
1.4 1.87340129806882
1.5 3.91258700064986
1.6 2.06976260761801
1.7 2.17507421351221
1.8 2.27906519813361
1.9 2.39723488257166
2 2.51197357827741
2.1 2.63414942471122
2.2 2.78140705482172
2.3 2.97254552455843
2.4 3.26699380566214
2.5 3.90979403691354
2.6 3.3712990710706
2.7 3.40332623842991
2.8 3.48100532340604
2.9 3.56237911962006
3 3.64866768643988
3.1 3.73785987015286
3.2 3.79620687326066
3.3 3.8490817341286
3.4 3.8802235452541
3.5 3.90036111183503
3.6 3.90992663717478
3.7 3.91288460178154
3.8 3.9119784283329
3.9 3.90992445993392
4 3.90425221094663
4.1 3.90150052170192
4.2 3.89778638207049
4.3 3.89549707893134
4.4 3.89364504122213
4.5 3.89202359218497
4.6 3.88969640734448
4.7 3.88818031985397
4.8 3.88744829525831
4.9 3.8865480486999
5 3.88420097538626
5.1 3.88255489142769
5.2 3.88173032838889
5.3 3.87994213643951
5.4 3.88091956836485
5.5 3.88009086374311
5.6 3.87849340077465
5.7 3.87854631263451
5.8 3.87777644597808
5.9 3.87678923694218
6 3.87555000774989
6.1 3.87501802386281
6.2 3.87129404813939
6.3 3.87118035354093
6.4 3.87015996869155
};
% \addlegendentry{L1 error}
\addplot [semithick, gray, dashed, mark=x, mark size=5, mark options={solid}, forget plot]
table {%
0 0
3 3
6 6
};
\addplot [semithick, gray, dashed, forget plot]
table {%
0 3.5
6 3.5
};
\end{axis}

\end{tikzpicture}
% This file was created with tikzplotlib v0.10.1.
\begin{tikzpicture}

\definecolor{darkgray176}{RGB}{176,176,176}
\definecolor{gray}{RGB}{128,128,128}
\definecolor{lightgray204}{RGB}{204,204,204}

\begin{axis}[
width=0.951\fwidth,
height=0.75\fwidth,
at={(0\fwidth,0\fwidth)},
legend cell align={left},
legend style={
  fill opacity=0.8,
  draw opacity=1,
  text opacity=1,
  at={(0.03,0.97)},
  anchor=north west,
  draw=lightgray204
},
tick align=outside,
tick pos=left,
x grid style={darkgray176},
xlabel={Smoothness $\alpha+1/2$},
xmin=-0.37, xmax=7.77,
xtick style={color=black},
y grid style={darkgray176},
ymin=-0.35, ymax=7.35,
ytick style={color=black}
]
\addplot [semithick, blue, mark=asterisk, mark size=2.5, mark options={solid}, only marks]
table {%
0.6 1.15431235556504
0.7 1.23903296211342
0.8 1.32520078421938
0.9 1.41210511972869
1 1.50057504219799
1.1 1.58923436580903
1.2 1.67977687288086
1.3 1.7722923734116
1.4 1.86468939953023
1.5 4.06999713852756
1.6 2.05304842447623
1.7 2.14629260187653
1.8 2.24056830876046
1.9 2.33745112107786
2 2.43385185828405
2.1 2.53564192510558
2.2 2.64242394655552
2.3 2.7625735386598
2.4 2.93067443836221
2.5 3.89435207657146
2.6 3.17118743687624
2.7 3.26104193239607
2.8 3.37733292618256
2.9 3.50671372606654
3 3.64591407444857
3.1 3.79249319722737
3.2 3.91731802485331
3.3 4.00244257334348
3.4 4.0330322971265
3.5 4.04092591627834
3.6 4.05231912325195
3.7 3.98116155286914
3.8 3.9877159573644
3.9 4.01977503816522
4 4.03887171120406
4.1 4.05215604925741
4.2 4.04258766247814
4.3 4.033045222908
4.4 4.01815211736899
4.5 4.01346995439465
4.6 4.02713644335221
4.7 4.03499234030851
4.8 4.05265079044164
4.9 4.06107429535371
5 4.0737322151642
5.1 4.08298014279335
5.2 4.08458525781747
5.3 4.08003477915101
5.4 4.07787629499789
5.5 4.07694688661275
5.6 4.06889710260195
5.7 4.06250108808298
5.8 4.0574825005761
5.9 4.06274884310099
6 4.06983515733199
6.1 4.07223189126723
6.2 4.08166671111866
6.3 4.08536342950199
6.4 4.09429079803354
6.5 4.10276162638973
6.6 4.10066893567032
6.7 4.11256554588363
6.8 4.10255903407394
6.9 4.10788570944888
7 4.10771947304035
7.1 4.10175253757711
7.2 4.09888477149967
7.3 4.09708977257361
7.4 4.09405922599726
};
% \addlegendentry{L1 error}
\addplot [semithick, blue, mark=*, mark size=1.5, mark options={solid}, only marks]
table {%
0.6 0.694702041650881
0.7 0.777357512114509
0.8 0.863581181002932
0.9 0.952392835147104
1 1.0440233639665
1.1 1.13741568343086
1.2 1.23259821700035
1.3 1.32885441301268
1.4 1.42665122003106
1.5 3.95367557336499
1.6 1.62459395752245
1.7 1.72491441470584
1.8 1.82585032632793
1.9 1.92799653027889
2 2.03064491287331
2.1 2.13572858494462
2.2 2.24203894284311
2.3 2.35351849515322
2.4 2.48392937207173
2.5 3.94596899699261
2.6 2.61396945586331
2.7 2.73965954222462
2.8 2.86106217491698
2.9 2.98813280558437
3 3.13110142612323
3.1 3.30227176481873
3.2 3.51413550118215
3.3 3.73875029426627
3.4 3.89092510403254
3.5 3.94191062189347
3.6 3.94650944939542
3.7 3.9438268396546
3.8 3.9404949309058
3.9 3.93999900459822
4 3.93943576759246
4.1 3.93995220537294
4.2 3.93933571125451
4.3 3.93943747867113
4.4 3.93900733110524
4.5 3.93857057191674
4.6 3.93979809151988
4.7 3.93839031088774
4.8 3.93992876669357
4.9 3.93787617169115
5 3.9377672511497
5.1 3.93761557094634
5.2 3.93674963772044
5.3 3.93556330214462
5.4 3.93536985397372
5.5 3.93580367145807
5.6 3.9356951009167
5.7 3.93560296066124
5.8 3.93519213221326
5.9 3.9343280696975
6 3.93479388181645
6.1 3.93316228774827
6.2 3.93329393743155
6.3 3.93223843650589
6.4 3.93347094976979
6.5 3.93265127176836
6.6 3.93092676582179
6.7 3.93185300729123
6.8 3.93000290402277
6.9 3.9319033728328
7 3.93007563292544
7.1 3.92914761270558
7.2 3.92944633303573
7.3 3.92981678275981
7.4 3.92811446942759
};
% \addlegendentry{L2 error}
\addplot [semithick, blue, mark=x, mark size=2.5, mark options={solid}, only marks]
table {%
0.6 0.203953854981457
0.7 0.233452589659879
0.8 0.266112339250942
0.9 0.39417077190402
1 0.596390634462387
1.1 0.645561907705094
1.2 0.69855907891247
1.3 0.753056214364526
1.4 0.869675851712737
1.5 3.0391443432426
1.6 1.17573368498984
1.7 1.29769088750249
1.8 1.3666866626426
1.9 1.43708508110448
2 1.51083339584386
2.1 1.58660681455621
2.2 1.6949212419233
2.3 1.83338794745559
2.4 2.0018312408011
2.5 3.02588745059942
2.6 2.03043741429858
2.7 2.18403787942762
2.8 2.32464432593491
2.9 2.45381503193687
3 2.58071559911213
3.1 2.70305756026205
3.2 3.03186850041327
3.3 3.03360923511516
3.4 3.03308038861604
3.5 3.03763802669136
3.6 3.03286474656651
3.7 3.04056407341491
3.8 3.04033492706669
3.9 3.03730674505021
4 3.04607986022859
4.1 3.04235644846128
4.2 3.04430456812711
4.3 3.04392681444888
4.4 3.04530429873061
4.5 3.04730702022197
4.6 3.04545394137232
4.7 3.04469772849966
4.8 3.04604333797034
4.9 3.04872356737574
5 3.0509290629415
5.1 3.05493991914148
5.2 3.04959268744169
5.3 3.0479609709294
5.4 3.05061977217756
5.5 3.05337148252589
5.6 3.05398631066749
5.7 3.05602231453277
5.8 3.05586825519346
5.9 3.05377109697465
6 3.05145764666095
6.1 3.05690515023033
6.2 3.05822093606826
6.3 3.06151517817687
6.4 3.05976896810207
6.5 3.05847361272665
6.6 3.06043457367336
6.7 3.06274878810136
6.8 3.05818042215913
6.9 3.06197826516113
7 3.05930092363563
7.1 3.05952440910195
7.2 3.06557536303163
7.3 3.06383958087
7.4 3.06387144230314
};
% \addlegendentry{Linfty error}
\addplot [semithick, gray, dashed, mark=x, mark size=5, mark options={solid}, forget plot]
table {%
0 0
3.5 3.5
7 7
};
\addplot [semithick, gray, dashed, forget plot]
table {%
0 4
7 4
};
\end{axis}

\end{tikzpicture}
\caption{
Visualization of the convergence rates for interpolation error of $f_\alpha$ ($y$-axis) in the $L_1(\Omega)$, $L_2(\Omega)$, $L_\infty(\Omega)$ norm 
in terms of the fill distance $h_{X}$ over the parameter $\alpha$ ($x$-axis) for $\Omega = [0, 1]^d$, $d=1$ (left) and $d=2$ (right).
From top to bottom: Basic, linear and quadratic Matérn kernel (see \eqref{eq:matern_kernels}).
}
\label{fig:sobolev_conv_rates}
\end{figure}

\subsection{Saturation: No further superconvergence beyond $\theta = 2$}

We consider the piecewise linear Wendland
kernel \begin{align*}
k(x,y) = \max(1 - |x-y|, 0),
\end{align*}
defined on the domain $\Omega = [0,1]$. 
Its RKHS $\calh(\Omega)$ is norm equivalent to the Sobolev space $W_2^1(\Omega)$, i.e.\ $\tau = 1$. 
It is immediate to see that kernel interpolation using this kernel on pairwise distinct centers $X \subset \Omega$ boils down to piecewise linear spline 
interpolation on the subinterval $[\min X, \max X] \subset \Omega$. 

According to \Cref{subsec:superconv_kernel} we can expect a $\lVert \cdot \rVert_{L_2(\Omega)}$ convergence rate as $h_{X}^{2} \asymp |X|^{-2}$ for functions $f 
\in T(L_2(\Omega))$.
On the other hand, from the theory of linear spline interpolation we obtain the lower bound 
\begin{align}
\label{eq:lower_bound_spline}
\Vert f - I_X f \Vert_{L_2(\Omega)} &\geq c \cdot |X|^{-2}
\end{align}
for functions $f \in C([0, 1]) \cap C^2((0, 1))$ with $f''$ strictly larger zero on $[0, 1]$,
which can be generalized to functions with $f'' > 0$ and $f'' < 0$ piecewise.
Thus, the best possible $L^2(\Omega)$ convergence rate is already obtained for $f \in T(L_2(\Omega))$, 
i.e.\ one cannot expect in general any further improvement of the convergence rate for $f \in \mathcal{H}_{\theta}(\Omega)$, $\theta > 2$.

As a numerical experiment, we consider the interpolation of the function $f_\alpha(x) = x^\alpha$.
In this case both boundary points are included,
which allows to satisfy boundary conditions of low order.
The results are depicted in \Cref{fig:no_further_superconv}: 
Due to the use of both boundary points and the low smoothness of the kernel, 
the boundary conditions are satisfied explicitly, such that the $L_2(\Omega)$ convergence rate directly scales according to the Sobolev smoothness of the 
function $f_\alpha$ up to 2.

Due to $\lVert \cdot \rVert_{L_\infty(\Omega)} \geq \lVert \cdot \rVert_{L_2(\Omega)}$, 
also the $L_\infty(\Omega)$ error is lower bounded due to \eqref{eq:lower_bound_spline}.
Interestingly, this maximal rate of $L_1(\Omega)$ convergence is obtain for an even higher smoothness compared to the $L_2(\Omega)$ convergence rate.

\begin{figure}[htp!]
\centering
\setlength\fwidth{.48\textwidth}
% This file was created with tikzplotlib v0.10.1.
\begin{tikzpicture}

\definecolor{darkgray176}{RGB}{176,176,176}
\definecolor{gray}{RGB}{128,128,128}
\definecolor{lightgray204}{RGB}{204,204,204}

\begin{axis}[
legend cell align={left},
legend style={
  fill opacity=0.8,
  draw opacity=1,
  text opacity=1,
  at={(0.03,0.97)},
  anchor=north west,
  draw=lightgray204
},
tick align=outside,
tick pos=left,
x grid style={darkgray176},
xlabel={Smoothness $\alpha + 1/2$},
xmin=-0.12, xmax=2.52,
xtick style={color=black},
y grid style={darkgray176},
ylabel={$L_p$ convergence rate},
ymin=-0.186244108017403, ymax=2.10410686228654,
ytick style={color=black}
]
\addplot [semithick, blue, mark=asterisk, mark size=2.5, mark options={solid}, only marks]
table {%
0.6 1.10157887832526
0.7 1.20025859086752
0.8 1.29849644347069
0.9 1.39564523662444
1 1.49074497322909
1.1 1.5823914814965
1.2 1.66866217112901
1.3 1.74720232828288
1.4 1.81558002373095
1.5 -0.00145796496968987
1.6 1.9154556372716
1.7 1.94697282196953
1.8 1.96835252864869
1.9 1.98201421068527
2 1.99028542538591
2.1 1.99505144975485
2.2 1.99766898979592
2.3 1.99903430395787
2.4 1.99970245956371
};
\addlegendentry{$L_1(\Omega)$}
\addplot [semithick, blue, mark=*, mark size=1.5, mark options={solid}, only marks]
table {%
0.6 0.60096274021122
0.7 0.700423343454756
0.8 0.800187555056696
0.9 0.900083732918121
1 1.00003687286456
1.1 1.10001300449654
1.2 1.19999267201368
1.3 1.29995274348248
1.4 1.39983838195783
1.5 0.0148028647260693
1.6 1.59846588467475
1.7 1.69554154314681
1.8 1.78775868024017
1.9 1.8692131394005
2 1.93182428005786
2.1 1.97083048611587
2.2 1.9898478005156
2.3 1.99718533493797
2.4 1.9994612269476
};
\addlegendentry{$L_2(\Omega)$}
\addplot [semithick, blue, mark=x, mark size=2.5, mark options={solid}, only marks]
table {%
0.6 0.100006049479414
0.7 0.200012302220852
0.8 0.300014989072727
0.9 0.400011761213616
1 0.500017221339936
1.1 0.600012230818193
1.2 0.700005811254762
1.3 0.800022497378773
1.4 0.900008137254211
1.5 -0.0821372457308601
1.6 1.10001002950207
1.7 1.20001078790244
1.8 1.30001344396967
1.9 1.40001038256294
2 1.50000446223593
2.1 1.60000291236285
2.2 1.70001431085894
2.3 1.80002048307915
2.4 1.90000113464016
};
\addlegendentry{$L_\infty(\Omega)$}
\addplot [semithick, gray, dashed, mark=x, mark size=5, mark options={solid}, forget plot]
table {%
0 0
1 1
2 2
};
\addplot [semithick, gray, dashed, forget plot]
table {%
0 0
2 0
};
\addplot [semithick, gray, dashed, forget plot]
table {%
2 0
2 2
};
\end{axis}

\end{tikzpicture}
\caption{
Visualization of the convergence rates for interpolation error of $f_\alpha$ ($y$-axis) in the $L_1(\Omega), L_2(\Omega), L_\infty(\Omega)$ norms in 
terms of the fill distance $h_{X}$ over the parameter $\alpha$ ($x$-axis) for $\Omega = [0, 1]$ and a Wendland kernel of low smoothness.
}
\label{fig:no_further_superconv}
\end{figure}
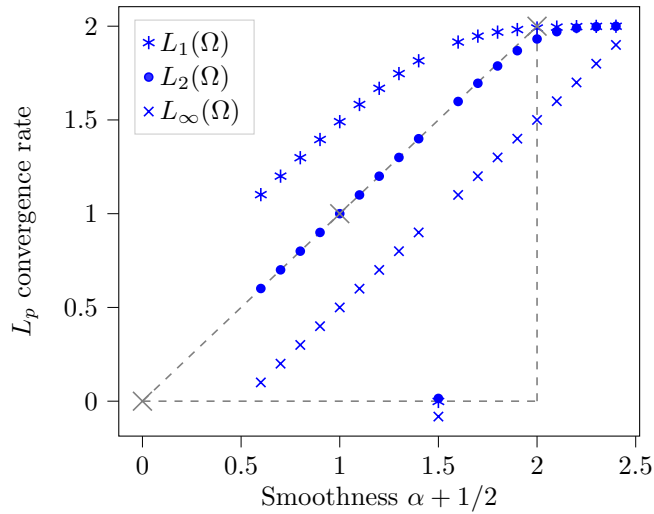

\subsection{Boundary conditions}\label{sec:ex_bcs}
We consider the Sobolev space $\calh(\Omega)\coloneqq W_2^2(0, 1)$ with its standard inner product. The reproducing kernel of this space is explicitly given
in point 2 of Lemma 4 in~\cite{saitoh2003reproducing}, and~\Cref{prop:bvp} with $d=1$ and $m=2$ states that $T(L_2(0,1))$ is the space of solutions 
of the PDE
\begin{equation}\label{eq:pde_example}
\begin{cases}
\begin{aligned}
u^{(4)}(x)-u^{(2)}(x)+ u(x) &=f(x),\;\;x\in(0, 1),\\
u^{(2)}(x) &= 0, \;\; x\in\{0, 1\},\\
u^{(3)}(x)- u^{(1)}(x) &= 0, \;\; x\in\{0, 1\}.
\end{aligned}
\end{cases}
\end{equation}
For $\alpha>0$ and $i=1,2,3$ we define three functions $f_{i,\alpha}(x)\coloneqq x^\alpha + p_i(x)$, where $p_i$ is a
suitable polynomial of degree $5$, so that $f_{1,\alpha}, f_{2,\alpha}, f_{3,\alpha}\in W_2^{\tau}(0,1)$ for all $\tau<\alpha+1/2$. In 
particular, they are all elements of $\calh(\Omega)$ if $\alpha>3/2$.
The polynomial terms are chosen to ensure, for a sufficiently large $\alpha$, that $f_{3,\alpha}$ satisfies all boundary conditions of~\eqref{eq:pde_example}, 
$f_{2,\alpha}$ satisfies only
those of order $2$, and $f_{1, \alpha}$ satisfies none. This implies that $f_{3,\alpha}\in T(L_2(\Omega))$ for $\alpha>4-1/2=7/2$.
To satisfy these requirements we use the functions
\begin{align*}
f_{1,\alpha}(x)&\coloneqq x^\alpha + x^2, \;\;
f_{2,\alpha}(x)\coloneqq x^\alpha + \frac{\alpha (1 - \alpha)}6 x^3,\\
f_{3,\alpha}(x)&\coloneqq x^\alpha +
\alpha x\left(\frac{-1 + 5 \alpha - 2 \alpha^2}{52}\ x^3
+\frac{8 - 14 \alpha + 3 \alpha^2}{39}\ x^2
+\frac{16 - 28 \alpha + 6 \alpha^2}{13}\right).
\end{align*}
For $f\in \calh(\Omega)$ we expect convergence as $h_X^{2 - 1/2}=h_X^{3/2}$ in the $L_\infty$ norm, and as $h_X^{2}$ in the $L_2$ norm, while for $f\in T(L_2(\Omega))$ 
according to \Cref{cor:application_to_sobolev} we
get $h_X^{7/2}$ ($L_\infty$ norm) and $h_X^{4}$ ($L_2$ norm).

We compute these rates for $20$ equally spaced values of $\alpha\in[3/2, 6]$. 
Figure~\ref{fig:bcs} reports these rates for $L_\infty$ (left) $L_2$ (right), as a function of the Sobolev smoothness $\alpha+1/2$. For both norms we get 
precisely the estimated rates for $f_{3,\alpha}\in\calh(\Omega)$ (i.e., $\alpha>3/2$) and $f_{3,\alpha}\in T(L_2(\Omega))$ (i.e., $\alpha>7/2$),
which are marked as crosses in the figure. Similarly, we obtain the same scaling for the 
intermediate $\alpha$ as predicted by \Cref{cor:application_to_sobolev} (dashed diagonal lines).
Also in this case, for $\alpha>7/2$ we observe a further increases of the $L_\infty$ rate by 
a factor $1/2$ before saturating, while the $L_2$ rate is limited by the value $4$, up to some minor oscillation around $\alpha=4$ that are likely 
due to an imprecise estimate of the rate.
For $f_{1, \alpha}$ and $f_{2, \alpha}$, in view of~\Cref{rem:boundary_interpolation} we expect that the converge rates scale with their smoothness (i.e., with 
$\alpha$) but also that they saturate after they stop meeting the boundary conditions. This can indeed be observed for both 
functions and both norms (Figure~\ref{fig:bcs}), even if also in this case we observe an increase of $1/2$ in the maximal rates.

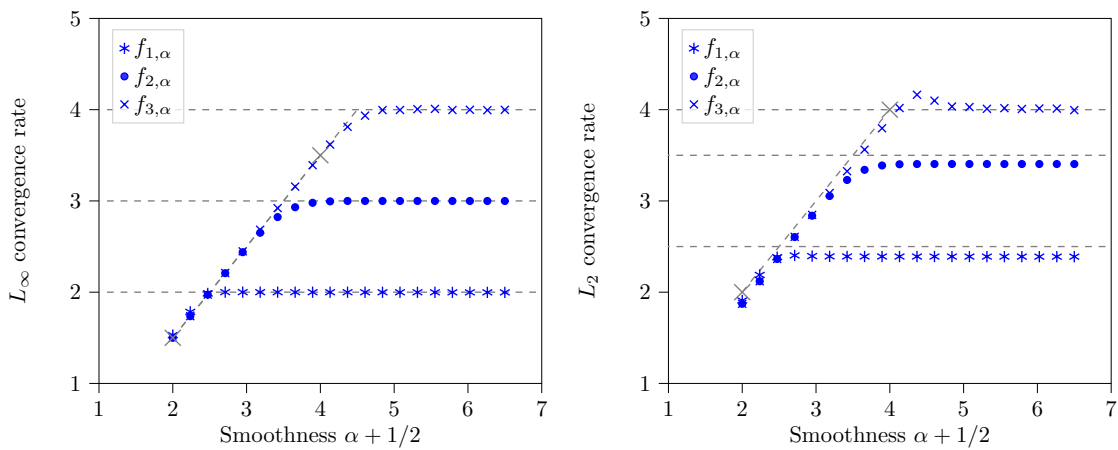
\begin{figure}[htp!]
\centering
\scalebox{.85}{% This file was created with tikzplotlib v0.10.1.
\begin{tikzpicture}

\definecolor{darkgray176}{RGB}{176,176,176}
\definecolor{gray}{RGB}{128,128,128}
\definecolor{lightgray204}{RGB}{204,204,204}

\begin{axis}[
legend cell align={left},
legend style={
  fill opacity=0.8,
  draw opacity=1,
  text opacity=1,
  at={(0.03,0.97)},
  anchor=north west,
  draw=lightgray204
},
tick align=outside,
tick pos=left,
x grid style={darkgray176},
xlabel={Smoothness $\alpha+1/2$},
xmin=1, xmax=7,
xtick style={color=black},
y grid style={darkgray176},
ylabel={$L_\infty$ convergence rate},
ymin=1, ymax=5,
ytick style={color=black},
ytick={0,1,2,3,4,5,6}
]
\addplot [semithick, blue, mark=asterisk, mark size=2.5, mark options={solid}, only marks]
table {%
2.0 1.530074774015412
2.2368421052631579649 1.7844677436177965
2.4736842105263157077 1.9858262123654398
2.7105263157894734505 2.000473587769605
2.9473684210526314153 2.0003137771811743
3.1842105263157893802 2.000148345805839
3.421052631578947345 1.9999808913903332
3.65789473684210531 1.99981287476013
3.8947368421052628307 1.9996449191578902
4.1315789473684207955 1.999477245024923
4.3684210526315787604 1.999311016370253
4.605263157894736281 1.9991473074557347
4.84210526315789469 1.9989845171760796
5.078947368421053099 1.9988229135012383
5.31578947368421062 1.9986634483725054
5.5526315789473681406 1.9985029307988778
5.7894736842105256613 1.998344657127824
6.0263157894736840703 1.9981882111664568
6.263157894736841591 1.9980314238476204
6.5 1.997876687434589
};
\addlegendentry{$f_{1,\alpha}$}
\addplot [semithick, blue, mark=*, mark size=1.5, mark options={solid}, only marks]
table {%
2.0 1.499976635630122
2.2368421052631579649 1.7367320899231218
2.4736842105263157077 1.9731733221154677
2.7105263157894734505 2.2082572646955088
2.9473684210526314153 2.438126608427544
3.1842105263157893802 2.651094050716503
3.421052631578947345 2.823186595918729
3.65789473684210531 2.9309980728156795
3.8947368421052628307 2.9793544717406
4.1315789473684207955 2.995085916586112
4.3684210526315787604 2.999741527493666
4.605263157894736281 2.999701321030858
4.84210526315789469 2.999663156424528
5.078947368421053099 2.999618923618138
5.31578947368421062 2.999575950414473
5.5526315789473681406 2.999535120364263
5.7894736842105256613 2.9994934404981293
6.0263157894736840703 2.9994589690409432
6.263157894736841591 2.9994152218648127
6.5 2.9993740654447953
};
\addlegendentry{$f_{2,\alpha}$}
\addplot [semithick, blue, mark=x, mark size=2.5, mark options={solid}, only marks]
table {%
2.0 1.499999877343951
2.2368421052631579649 1.736842616217672
2.4736842105263157077 1.9736857919228556
2.7105263157894734505 2.2105267375077906
2.9473684210526314153 2.4473675244697257
3.1842105263157893802 2.6842034364811567
3.421052631578947345 2.9209703469368677
3.65789473684210531 3.1573587284949363
3.8947368421052628307 3.39185333118752
4.1315789473684207955 3.6176486501349925
4.3684210526315787604 3.8118631859845546
4.605263157894736281 3.9344750159836117
4.84210526315789469 3.995665940510811
5.078947368421053099 3.9969212677775157
5.31578947368421062 4.005556364329684
5.5526315789473681406 4.008742066109757
5.7894736842105256613 3.9964321785394143
6.0263157894736840703 3.9988350294532133
6.263157894736841591 3.9946637159945517
6.5 3.9985520388147795
};
\addlegendentry{$f_{3,\alpha}$}
\addplot [semithick, gray, dashed, mark size=5, mark options={solid}, forget plot]
table {%
2 1.5
4.5 4
};
\addplot [semithick, gray, dashed, mark=x, mark size=5, mark options={solid}, forget plot]
table {%
2 1.5
4 3.5
};
\addplot [semithick, gray, dashed, forget plot]
table {%
0 2
10 2
};
\addplot [semithick, gray, dashed, forget plot]
table {%
0 3
10 3
};
\addplot [semithick, gray, dashed, forget plot]
table {%
0 4
10 4
};
\end{axis}

\end{tikzpicture}}
\scalebox{.85}{% This file was created with tikzplotlib v0.10.1.
\begin{tikzpicture}

\definecolor{darkgray176}{RGB}{176,176,176}
\definecolor{gray}{RGB}{128,128,128}
\definecolor{lightgray204}{RGB}{204,204,204}

\begin{axis}[
legend cell align={left},
legend style={
  fill opacity=0.8,
  draw opacity=1,
  text opacity=1,
  at={(0.03,0.97)},
  anchor=north west,
  draw=lightgray204
},
tick align=outside,
tick pos=left,
x grid style={darkgray176},
xlabel={Smoothness $\alpha+1/2$},
xmin=1, xmax=7,
xtick style={color=black},
y grid style={darkgray176},
ylabel={$L_2$ convergence rate},
ymin=1, ymax=5,
ytick style={color=black},
ytick={0,1,2,3,4,5,6}
]
\addplot [semithick, blue, mark=asterisk, mark size=2.5, mark options={solid}, only marks]
table {%
2.0 1.9092143803853547
2.2368421052631579649 2.188987149183617
2.4736842105263157077 2.383658857216741
2.7105263157894734505 2.4028797263595094
2.9473684210526314153 2.395331582975044
3.1842105263157893802 2.3923926103666875
3.421052631578947345 2.391515136707472
3.65789473684210531 2.3911862087542777
3.8947368421052628307 2.390979987872561
4.1315789473684207955 2.3907972558027213
4.3684210526315787604 2.3906199885334747
4.605263157894736281 2.3904435924317
4.84210526315789469 2.390269225688941
5.078947368421053099 2.3900953845758224
5.31578947368421062 2.3899233012676624
5.5526315789473681406 2.389751109206135
5.7894736842105256613 2.389580787023767
6.0263157894736840703 2.3894123983405926
6.263157894736841591 2.389244390692401
6.5 2.3890771196681357
};
\addlegendentry{$f_{1,\alpha}$}
\addplot [semithick, blue, mark=*, mark size=1.5, mark options={solid}, only marks]
table {%
2.0 1.8724790657764865
2.2368421052631579649 2.119818214744521
2.4736842105263157077 2.3637766700189844
2.7105263157894734505 2.604284260928471
2.9473684210526314153 2.8380523921121856
3.1842105263157893802 3.054110520268306
3.421052631578947345 3.2297378608870897
3.65789473684210531 3.34067790779139
3.8947368421052628307 3.3883825405666443
4.1315789473684207955 3.401664566011017
4.3684210526315787604 3.4042194848496052
4.605263157894736281 3.4045603962916102
4.84210526315789469 3.4045611270438285
5.078947368421053099 3.404512541657412
5.31578947368421062 3.404470788376715
5.5526315789473681406 3.404427838714633
5.7894736842105256613 3.404382331380942
6.0263157894736840703 3.4043500650381713
6.263157894736841591 3.4043045412912316
6.5 3.4042679891607084
};
\addlegendentry{$f_{2,\alpha}$}
\addplot [semithick, blue, mark=x, mark size=2.5, mark options={solid}, only marks]
table {%
2.0 1.8725047802954833
2.2368421052631579649 2.1199390952057016
2.4736842105263157077 2.3643248936825705
2.7105263157894734505 2.6066522886491077
2.9473684210526314153 2.8474682519582912
3.1842105263157893802 3.0870504734131345
3.421052631578947345 3.3254537265841297
3.65789473684210531 3.5623802995044858
3.8947368421052628307 3.796187971688326
4.1315789473684207955 4.017873065317524
4.3684210526315787604 4.163568374879332
4.605263157894736281 4.098981360666706
4.84210526315789469 4.034934771102358
5.078947368421053099 4.028820584714819
5.31578947368421062 4.00952609606462
5.5526315789473681406 4.016590688527335
5.7894736842105256613 4.007436685331195
6.0263157894736840703 4.013289803063447
6.263157894736841591 4.011975852556432
6.5 3.994999245087189
};
\addlegendentry{$f_{3,\alpha}$}
\addplot [semithick, gray, dashed, mark=x, mark size=5, mark options={solid}, forget plot]
table {%
2 2
4 4
};
\addplot [semithick, gray, dashed, forget plot]
table {%
0 2.5
10 2.5
};
\addplot [semithick, gray, dashed, forget plot]
table {%
0 3.5
10 3.5
};
\addplot [semithick, gray, dashed, forget plot]
table {%
0 4
10 4
};
\end{axis}

\end{tikzpicture}}
\caption{Convergence rates of the interpolation error of $f_{i,\alpha}$ for $i=1, 2, 3$, plotted as functions of $\alpha\in(3/2, 6]$, for the $L_\infty$
norm (left) and the $L_2$ norm (right). The horizontal dotted lines report the observed saturating values, while the theoretically expected rates are marked with 
crosses ($\calh(\Omega)$ and $T(L_2(\Omega))$), and with diagonal dashed lines (for the intermediate spaces).}\label{fig:bcs}
\end{figure}

\subsection{Periodic kernels} \label{sec:periodic-numsim}

Recall from Example~\ref{ex:periodic_kernel} that $W_{2,\textup{per}}^\alpha(\Omega)$, the periodic Sobolev space of order $\alpha > 0$ on $\Omega = [0, 1]$, consists of functions $f \colon [0, 1] \to \C$ of the form
\begin{equation} \label{eq:periodic-f-numsim}
  f(x) = \sum_{j \in \Z} c_j e^{2\pi \mathrm{i} j x} = \sum_{j \in \Z} c_j [ \cos(2\pi j x) + \mathrm{i} \sin(2\pi jx)]
\end{equation}
such that the coefficients $c_j \in \C$ satisfy $\sum_{j \in \Z} \lvert j \rvert^{2\alpha} \lvert c_j \rvert^2 < \infty$.
Here we consider the periodic kernel $k_r$ in~\eqref{eq:periodic-kernel} for $r \in \N$.
The RKHS of this kernel is $W_{2,\textup{per}}^r(\Omega)$ and, as observed in Example~\ref{ex:periodic_kernel}, the $\theta$th power of the RKHS is simply $\calh_\theta(k_r, \Omega) = W_{2,\textup{per}}^{\theta r}(\Omega)$.

We let $\xi_j \in \{-1, 0, 1\}$ be independent uniformly distributed random integers and consider functions
\begin{equation} \label{eq:periodic-sample-funcs}
  f_\alpha(x) = 1 + \sum_{j = 1}^\infty j^{-\alpha} \xi_j \cos(2\pi j x),
\end{equation}
which correspond to the selection $c_0 = 1$ and $c_j = c_{-j} = \frac{1}{2} \lvert j \rvert^{-\alpha} \xi_{\lvert j \rvert}$ for $j \neq 0$ 
in~\eqref{eq:periodic-f-numsim}.
Therefore \smash{$f_\alpha \in W_{2,\textup{per}}^\tau(\Omega)$} with probability one if $\tau < \alpha + 1/2$ and with probability zero if $\tau \geq \alpha + 
1/2$.
That is,
\begin{equation*}
  \mathbb{P}[ f_\alpha \in \calh_\theta(k_r, \Omega) ] = 1 \: \text{ if } \: \theta r < \alpha + 1/2 \quad \text{ and } \quad \mathbb{P}[ f_\alpha \in \calh_\theta(k_r, \Omega) ] = 0 \: \text{ if } \: \theta r \geq  \alpha + 1/2.
\end{equation*}
Some $f_\alpha$ are displayed in \Cref{fig:periodic_functions}.
In computations we truncate the series~\eqref{eq:periodic-sample-funcs} after 1,000 terms.
Because $\calh(k_r, \Omega) = W_{2,\textup{per}}^r(\Omega)$ is a sub-space of $W_2^r(\Omega)$, by \Cref{th:std_error_bound} we expect at least the rate \smash{$h_X^{r-1/2}$} in the $L_\infty$ norm and rate $h_X^r$ in the $L_1$ and $L_2$ norms for $f_\alpha \in W_{2,\textup{per}}^r(\Omega)$.
By \Cref{cor:application_to_sobolev} and \smash{$\calh_\theta(k_r, \Omega) = W_{2,\textup{per}}^{\theta r}(\Omega)$}, for $f_\alpha \in W_{2,\textup{per}}^{\theta r}(\Omega)$ we expect at least the $L_\infty$ rate \smash{$h_X^{(1+\theta) r - 1/2}$} and the $L_1$ and $L_2$ rate $h_X^{(1+\theta)r}$ if $\theta \in [0, 1]$.

Results are reported in \Cref{fig:periodic_rates} for $r = 1$ and $r = 2$.
We observe an overall agreement with the theory. For $r = 1$ saturation is evident after $\alpha = 3/2$, as predicted by the theory, while for $r = 2$ the $L_1$ and $L_2$ rates appear to saturate only after $\alpha = 4$, rather than $\alpha = 7/2$ predicted by the theory. 
Moreover, note that the $L_\infty$ rate appears to more or less match the $L_1$ and $L_2$ rates for all $\alpha$ when $r = 1$ but saturates already after $\alpha = 7/2$ when $r = 2$.
We lack an explanation for these phenomena.

\begin{figure}[th]
  \centering
  \includegraphics[width=\textwidth]{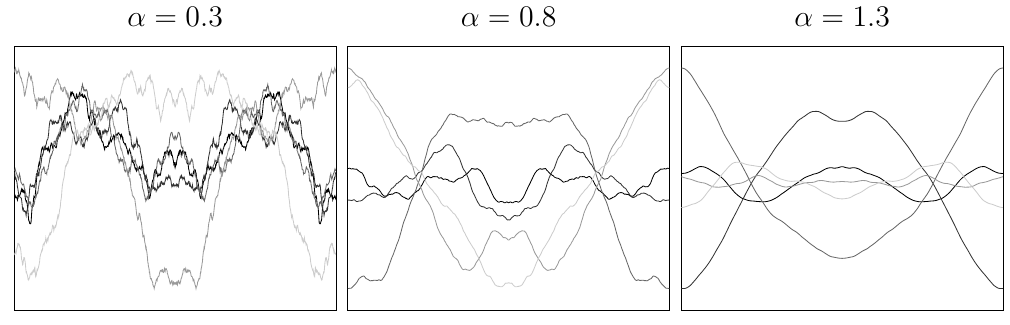}
  \caption{Five functions $f_\alpha$ on $[0, 1]$ sampled according to~\eqref{eq:periodic-sample-funcs} for each $\alpha \in \{0.3, 0.8, 1.3\}$.}
  \label{fig:periodic_functions}
\end{figure}

\begin{figure}[th]
  \centering
  \setlength\fwidth{.48\textwidth}
  % This file was created with tikzplotlib v0.10.1.
\begin{tikzpicture}

\definecolor{darkgray176}{RGB}{176,176,176}
\definecolor{gray}{RGB}{128,128,128}
\definecolor{lightgray204}{RGB}{204,204,204}

\begin{axis}[
width=0.951\fwidth,
height=0.75\fwidth,
at={(0\fwidth,0\fwidth)},
legend cell align={left},
legend style={
  fill opacity=0.8,
  draw opacity=1,
  text opacity=1,
  %at={(0.03,0.97)},
  %anchor=north west,
  legend pos = south east,
  draw=lightgray204
},
tick align=outside,
tick pos=left,
x grid style={darkgray176},
xmin=0.0, xmax=2.5,
xtick style={color=black},
y grid style={darkgray176},
ymin=0.0, ymax=2.5,
ytick style={color=black},
xlabel = {Smoothness $\alpha + 1/2$},
ylabel = {Convergence rate},
title = {$r = 1$}
]

\addplot
  [
    semithick, 
    blue, 
    mark=asterisk, 
    mark size=2.5,  
    mark options={solid}, 
    only marks
  ] 
  table [x=r_f, y=L1]{Figures/data/rates-r=1.txt};

\addlegendentry{$L_1(\Omega)$}
\addplot [semithick, blue, mark=*, mark size=1.5, mark options={solid}, only marks]
table [x=r_f, y=L2]{Figures/data/rates-r=1.txt};
\addlegendentry{$L_2(\Omega)$}
\addplot [semithick, blue, mark=x, mark size=2.5, mark options={solid}, only marks]
table [x=r_f, y=L_inf]{Figures/data/rates-r=1.txt};
\addlegendentry{$L_\infty(\Omega)$}
\addplot [semithick, gray, dashed, mark=x, mark size=5, mark options={solid}, forget plot]
table {%
0.2 0.2
2.3 2.3 
};
\addplot [semithick, gray, dashed, forget plot]
table {%
0 2.0
2.5 2.0
};
\end{axis}

\end{tikzpicture}
  % This file was created with tikzplotlib v0.10.1.
\begin{tikzpicture}

\definecolor{darkgray176}{RGB}{176,176,176}
\definecolor{gray}{RGB}{128,128,128}
\definecolor{lightgray204}{RGB}{204,204,204}

\begin{axis}[
width=0.951\fwidth,
height=0.75\fwidth,
at={(0\fwidth,0\fwidth)},
legend cell align={left},
legend style={
  fill opacity=0.8,
  draw opacity=1,
  text opacity=1,
  %at={(0.03,0.97)},
  %anchor=north west,
  legend pos = south east,
  draw=lightgray204
},
tick align=outside,
tick pos=left,
x grid style={darkgray176},
xmin=0.0, xmax=5.0,
xtick style={color=black},
y grid style={darkgray176},
ymin=0.0, ymax=5.0,
ytick style={color=black},
xlabel = {Smoothness $\alpha + 1/2$},
title = {$r = 2$}
]

\addplot
  [
    semithick, 
    blue, 
    mark=asterisk, 
    mark size=2.5,  
    mark options={solid}, 
    only marks
  ] 
  table [x=r_f, y=L1]{Figures/data/rates-r=2.txt};

\addlegendentry{$L_1(\Omega)$}
\addplot [semithick, blue, mark=*, mark size=1.5, mark options={solid}, only marks]
table [x=r_f, y=L2]{Figures/data/rates-r=2.txt};
\addlegendentry{$L_2(\Omega)$}
\addplot [semithick, blue, mark=x, mark size=2.5, mark options={solid}, only marks]
table [x=r_f, y=L_inf]{Figures/data/rates-r=2.txt};
\addlegendentry{$L_\infty(\Omega)$}
\addplot [semithick, gray, dashed, mark=x, mark size=5, mark options={solid}, forget plot]
table {%
0.2 0.2
4.8 4.8 
};
\addplot [semithick, gray, dashed, forget plot]
table {%
0 4.0
5 4.0
};
\end{axis}

\end{tikzpicture}
    \caption{Convergence rates of the interpolation error of $f_{\alpha}$ in~\eqref{eq:periodic-sample-funcs} for the periodic kernels of orders $r = 1$ (left) and $r = 2$ (right) given in~\eqref{eq:periodic-kernel}. For each $\alpha$, the shown rates are averages over rates for 100 independent $f_\alpha$.}
  \label{fig:periodic_rates}
\end{figure}
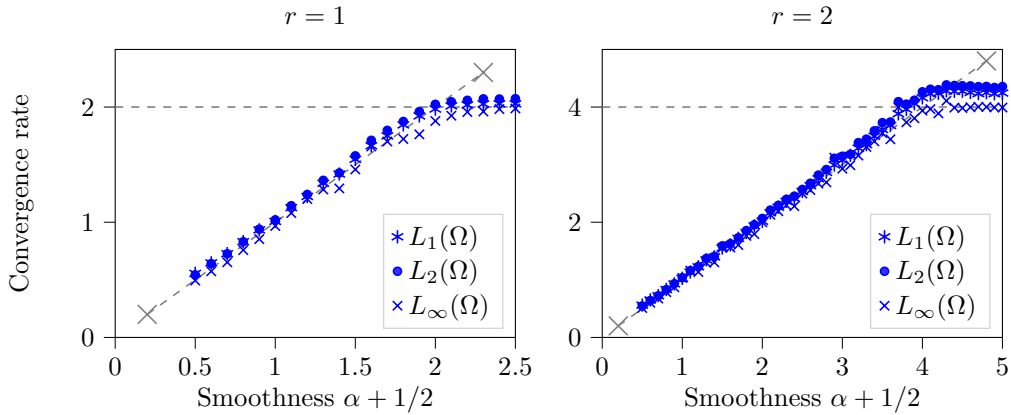

\section{Conclusion and outlook}
This work extends the theory of superconvergence of kernel approximation by identifying broader conditions, formulated via operator ranges and interpolation spaces, under which improved approximation rates can be guaranteed. In particular, we show that for a wide class of kernels and function spaces, including Sobolev RKHSs, superconvergence emerges from additional regularity and structural properties of the target function.

Several questions have emerged from our analysis and remain so far unanswered. 
First, the numerical experiments show a better rate of convergence in the $L_1$ than in the $L_2$ norm, and a saturation of these rates to values that are 
larger than predicted by a $1/2$ term.
Moreover, our understanding of the interpolation spaces for some operators is still limited.
We also believe that much more could be said about the periodic setting considered \Cref{ex:periodic_kernel} and \Cref{sec:periodic-numsim}.

In the Sobolev setting, we still lack tools to determine an exact association between boundary conditions and a given kernel. Moving from boundary conditions 
to a kernel would be helpful for example in adapting the results of this paper to the solution of PDEs by collocation, while knowing the exact boundary 
conditions of some commonly used kernels, such as the Mat\`ern and Wendland ones, would help to improve the convergence theory for certain classes of functions. 
Further connections to the works~\cite{fasshauer2012green,fasshauer2011reproducing,fasshauer2013reproducing} should be explored in this direction.

Apart from addressing these questions, we plan to consider the case of general (non orthogonal) projections, which would cover even more general approximation 
schemes.
Moreover, in view of the inverse statements as derived in \cite{wenzel2025sharp},
we also aim at deriving corresponding inverse statements for the general superconvergence case.

\section*{Acknowledgement}
The authors would like to thank Robert Schaback for commenting on an early version of this paper, and Luigi Provenzano for providing useful comments on the 
topic of Section~\ref{sec:boundary}. 

T.K.\ was supported by the Research Council of Finland grant 359183 (Flagship of Advanced Mathematics for Sensing, Imaging and Modelling).
G.S.\ is a member of INdAM-GNCS, and his
work was partially supported by the project ``Perturbation problems and asymptotics for elliptic differential equations: variational and potential theoretic 
method'' funded
by the program “NextGenerationEU” and by MUR-PRIN, grant 2022SENJZ3.

\bibliography{references_paper}
\bibliographystyle{abbrv}
\end{document}